\numberwithin{equation}{section}
\newtheorem{thm}{Theorem}[section]
\newtheorem{lemma}[thm]{Lemma}
\newtheorem{definition}[thm]{Definition}
\newtheorem{remark}[thm]{Remark}
\newtheorem{prop}[thm]{Proposition}
\newtheorem{corollary}[thm]{Corollary}
\author[1]{Zihao Gu}
\author[1]{Yiqing Lin}
\author[,1]{Kun Xu\footnote{Corresponding author. Email address: \url{1949101x_k@sjtu.edu.cn} (K. Xu).}}
\affil[1]{\small{School of Mathematical Sciences, Shanghai Jiao Tong University, 200240 Shanghai, China.}}
\title{Reflected BSDE driven by a marked point process with a convex/concave  generator}
\date{October 25, 2023}
\begin{document}

\maketitle

\begin{abstract}
In this paper, a class of reflected backward stochastic differential equations (RBSDE) driven  by a marked point process (MPP) with a convex/concave generator is studied. Based on fixed point argument, $\theta$-method and truncation technique, the well-posedness of this kind of RBSDE with unbounded terminal condition and obstacle is investigated. Besides, we present an application  on the pricing of American options via utility maximization, which is solved by constructing an RBSDE with a convex generator. 
\end{abstract}

{\bf Keywords:}  Reflected BSDEs, convex/concave generator, marked point process.

\tableofcontents

\section{Introduction}
Backward stochastic differential equations (BSDEs) were first introduced by Bismut in 1973 \cite{bismut1973conjugate} as equation for the adjoint process in the stochastic version of Pontryagin maximum principle. Pardoux and Peng \cite{pardoux1990adapted} have generalized the existence and uniqueness result in the case when the driver is Lipschitz continuous and the terminal value $\xi$ is square integrable.
Due to a wide range of applications, the research of BSDEs with a quadratic generator (quadratic BSDEs) has attracted many people's attention. In particular, quadratic BSDEs for a bounded terminal value $\xi$ was studied by Kobylanski \cite{kobylanski2000backward} via an approximation procedure of the driver. Thereafter, the result was generated by Briand and Hu \cite{briand2006bsde, briand2008quadratic} for unbounded terminal value $\xi$ of some suitable exponential moments. In contrast, Tevzadze \cite{tevzadze2008solvability} proposed a fundamentally different approach by means of a fixed point argument. Fairly large number of applications of quadratic BSDEs can be found in literature, for instance, on PDEs \cite{delbaen2015uniqueness}, risk sensitive control problems \cite{ hu2011some}, indifference pricing in  incomplete market \cite{hu2005utility,  morlais2009quadratic} and etc.

In order to solve the obstacle problem in partial differential equations,  reflected backward stochastic differential equations (RBSDEs) was firstly introduced in El Karoui et al. \cite{Karoui1997ReflectedSO} in the Brownian framework,  with generator $f$, terminal condition $\xi$ and obstacle process $L$:
$$
L_t \leq Y_t=\xi+\int_t^T f\left(s, Y_s, Z_s\right) d s+K_T-K_t-\int_t^T Z_s d B_s, \quad t \in[0, T],
$$
where the solution $(Y, Z, K)$ satisfies the so-called flat-off condition (or, Skorokhod condition):
$$
\int_0^T\left(Y_t-L_t\right) d K_t=0,
$$
where $K$ is an increasing process. El Karoui et al. proved the solvability of RBSDE with Lipschitz $f$ and square integrable terminal $\xi$. Matoussi \cite{matoussi1997reflected} investigated RBSDE with square integrable terminal conditions when generators are continuous and linearly increasing with respect to variables $Y$ and $Z$. After that, Kobylanski et al. \cite{kobylanski2002reflected} investigated RBSDEs with bounded terminal conditions and bounded obstacles when the generator $f$ has superlinear growth in $Y$ and quadratic growth in $Z$, then the result was generalized by Lepeltier and Xu \cite{lepeltier2007reflected} who constructed the existence of a solution with unbounded terminal values, but still with a bounded obstacle. Bayrakstar and Yao \cite{bayraktar2012quadratic} studied the well-posedness of quadratic RBSDE under unbounded terminal and unbounded obstacles, with the help of $\theta$-method. For more 
studies related to RBSDEs, we refer the readers to  Essaky and Hassani \cite{essaky2011general}, Ren and Xia \cite{ren2006generalized}, Jia and Xu \cite{jia2008construction} and so on.

Moreover, motivated by the probabilistic interpretation of viscosity solution of semilinear integral-partial differential equations, BSDE with jumps (BSDEJ) with Lipschitz generator was studied by Barles, Buckdahn and Pardoux \cite{barles1997backward}. Meanwhile,  Li and Tang \cite{Tang_1994}  obtained the well-posedness for Lipschitz  BSDEJs via a fixed point approach similar to that used in  Pardoux and Peng \cite{pardoux1990adapted}, see also Papapantoleon, Possama\"i, and Saplao \cite{Papapantoleon_2018} for a more general framework. In particular, a class of BSDEs driven by a random measure associated with a marked point process as follows is investigated by many researchers. 
\begin{equation}
Y_t= \xi + \int_t^T f\left(t, Y_s, U_s\right)dA_s-\int_t^T \int_E U_s(e) q(d sd e).
\end{equation}
Here $q$ is a compensated integer random measure corresponding to some marked point process $(T_n,\zeta_n)_{n\ge 0}$, and $A$ is  the dual predictable projection of the event counting process related to the marked point process, which is a continuous and increasing process. The well-posedness of BSDEs driven by general marked point processes were investigated in Confortola \& Fuhrman \cite{Confortola2013} for the weighted-$L^2$ solution, Becherer \cite{Becherer_2006} and Confortola \& Fuhrman \cite{Confortola_2014} for the $L^2$ case,  Confortola,  Fuhrman \& Jacod \cite{Confortola2016} for the $L^1$ case and Confortola \cite{Confortola_2018} for the $L^p$ case. A more general BSDE with both Brownian motion diffusion term and a very general marked point process, which is non-explosive and has totally inaccessible jumps was studied in Foresta \cite{foresta2021optimal}.

In addition, in order to solve the utility maximization problem with jumps, BSDEJ driven by quadratic coefficient was studied by Morlais \cite{morlais2010new}, see also Kazi-Tani, Possama\"i, and Zhou \cite{kazi2015quadratic} for a fixed point approach. Besides, with the help of stability of quadratic semimartingales, Barrieu and El Karoui \cite{barrieu2013monotone} proposed a quadratic structure condition and showed the existence of a solution under the unbounded terminal condition in a continuous setup. This structure condition was generalized to a so-called quadratic-exponential growth condition 
in accordance with the jump setting in  Ngoupeyou \cite{ngoupeyou2010optimisation}, Jeanblanc, Matoussi \& Ngoupeyou \cite{jeanblanc2012robust}, El Karoui, Matoussi \& Ngoupeyou \cite{karoui2016quadratic}, and Fujii and Takahashi \cite{fujii2018quadratic}. However, those results for unbounded terminals only provided existence without uniqueness. Recently, with the help of $\theta$-method, Kaaka\"i, Matoussi and Tamtalini \cite{kaakai2022utility} obtained the well-posedness of a special class of quadratic exponential BSDEJs with unbounded terminal conditions aroused in a robust utility maximization problem, under several special structural conditions.

In this paper, we consider the following type of RBSDEs driven by a marked point process, which generalized the results of Brownian driven BSDEs studied in \cite{bayraktar2012quadratic} and Poisson driven BSDEJs attributed to e.g.  Hamad\'ene and Ouknine \cite{hamadene2003reflected,  hamadene2016reflected}. 
\begin{equation}
\left\{\begin{array}{l}
Y_t=\xi+\int_t^T f\left(s,Y_s, U_s\right) d A_s +\int_t^T d K_s-\int_t^T \int_E U_s(e) q(d s d e), \quad 0 \leq t \leq T, \\
Y_t \geq L_t, \quad 0 \leq t \leq T, \\
\int_0^T\left(Y_{s^{-}}-L_{s^{-}}\right) d K_s=0.
\end{array}\right.
\end{equation}
Some related studies on  doubly reflected RBSDEJs can be found in Cr\'epey and Matoussi \cite{crepey2008reflected}. Moreover, RBSDEJs driven by Lévy process considered in for instance,  Ren and El Otmani \cite{ren2010generalized}, Ren and Hu \cite{ren2007reflected} and El Otmani \cite{el2009reflected} are also enlightening. 
Compared with the jump setting in e.g. Matoussi and Salhi \cite{matoussi2020generalized}, the  process $A$ is not necessarily absolutely continuous with respect to the Lebesgue measure. This type of RBSDEs has been investigated in Foresta \cite{foresta2021optimal}. The author established the well-posedness with Lipschitz drivers with the help of a fixed point argument. Inspired by this work and motivated by the pricing of American options via utility maximization in a jump market, in this paper, a quadratic-exponential growth condition on the generator $f$ is assumed in accordance with the marked point process. In particular, compared with the growth condition in \cite{bayraktar2012quadratic}, we allow more flexibility for the operator $f(t,0,0)$.  Besides, with the help of the results for classical 
Brownian driven BSDEs in \cite{bayraktar2012quadratic}, together with our conclusions, we are able to build the well-posedness for reflected quadratic BSDEs driven by both a marked point process and a Brownian motion.

The outline of the paper is as follows. In section \ref{section pre}, we  introduce some basic notations, assumptions and existing results on reflected BSDEs driven by marked point process. The reminder is organized as four parts: section \ref{section 3} is devoted to give the comparison theorem for quadratic exponential RBSDEs with a convex / concave generator; section \ref{section priori} provides a priori estimates  in quadratic exponential RBSDEs; in section \ref{section existence for bdd} and \ref{section existence for unbdd}, we establish the well-posedness of the quadratic reflected BSDEs driven by a marked point process with bounded /  unbounded terminal condition and obstacle, respectively; in Section \ref{section appl}, we present an application of quadratic RBSDEs to the pricing of American options via utility maximization. 

\section{Preliminaries}
\label{section pre}
\subsection{Basic properties on marked point process and notations}
We recall some basic properties of marked point processes. For more details, see \cite{foresta2021optimal, Bremaud1981, last1995marked, cohen2012existence}. 

 Assume that $(\Omega, \mathscr{F}, \mathbb{P})$ is a complete probability space and $E$ is  a mark space equipped with the Borel $\sigma$-algebra $\mathscr{B}(E)$. Given a sequence of random variables $(T_n,\zeta_n)$ taking values in $[0,\infty]\times E$, set $T_0=0$ and $\mathbb P-a.s.$ 
\begin{itemize}
\item $T_n\le T_{n+1},\ \forall n\ge 0;$
\item $T_n<\infty$ implies $T_n<T_{n+1} \ \forall n\ge 0.$
\end{itemize}
Then the sequence $(T_n,\zeta_n)_{n\ge 0}$ is called a marked point process (MPP). Moreover, we assume the marked point process is non-explosive, i.e., $T_n\to\infty,\ \mathbb P-a.s.$.

 For each MPP, define an associated random discrete measure $p$ on $\left((0,+\infty) \times E, \mathscr{B}((0,+\infty) \times E)\right)$:
\begin{equation}
\label{eq p}
    p(\omega, D)=\sum_{n \geq 1} \mathbf{1}_{\left(T_n(\omega), \zeta_n(\omega)\right) \in D} ,\ \forall \omega\in\Omega.
\end{equation} 
For each $\tilde C \in \mathscr{B}(E)$, define a counting process $N_t(\cdot,\tilde C)=p(\cdot,(0, t] \times \tilde C)$ and denote $N_t(\cdot)=N_t(\cdot,E)$. Obviously, both are right continuous increasing process starting from zero.  Define for $t \geq 0$
$$
\mathscr{G}_t^0=\sigma\left(N_s(\tilde C): s \in[0, t], \tilde C \in \mathscr{B}(E)\right)
$$
and $\mathscr{G}_t=\sigma\left(\mathscr{G}_t^0, \mathscr{N}\right)$, where $\mathscr{N}$ is the family of $\mathbb{P}$-null sets of $\mathscr{F}$. 
Denote by $\mathbb{G}=\left(\mathscr{G}_t\right)_{t \geq 0}$ the  completed filtration generated $p$, which is right continuous and satisfies the usual hypotheses.\footnote{ Given a standard Brownian motion $W\in \mathbb R^d$, independent with the MPP, in order to coping with the BSDEs with a Brownian diffusion term as in \cite{foresta2021optimal}, it is natural to enlarge the filtration to $\tilde{\mathbb G}=(\tilde{\mathcal G}_t)$, the completed filtration generated by the MPP and $W$, which satisfies the usual conditions as well.}

Each marked point process has a unique compensator $v$, a predictable random measure such that
$$
\mathbb{E}\left[\int_0^{+\infty} \int_E C_t(e) p(d t d e)\right]=\mathbb{E}\left[\int_0^{+\infty} \int_E C_t(e) v(d t d e)\right]
$$
for all $C$ which is non-negative and $\mathscr{P}^{\mathbb{G}} \otimes \mathscr{B}(E)$-measurable, where $\mathscr{P}^{\mathbb{G}}$ is the $\sigma$-algebra generated by $\mathbb{G}$-predictable processes. Moreover, in this paper we always assume that there exists a function $\phi$ on $\Omega \times[0,+\infty) \times \mathscr{B}(E)$ such that $\nu(\omega, d t d e)=\phi_t(\omega, d e) d A_t(\omega)$, where $A$ is the dual predictable projection of $N$. In other words, $A$ is the unique right continuous increasing predictable process $A$ with $A_0=0$ such that for any non-negative predictable process $D$, it holds that
$$
\mathbb E\left[\int_{0}^\infty D_t dN_t\right]=E\left[\int_{0}^\infty D_t dA_t\right].
$$
In addition, as in \cite{Confortola2013}, we always assume that the kernel $\phi$ satisfies the following properties, which can be proved under mild assumptions on $E$, 
\begin{itemize}
\item for each $(\omega,t)\in \Omega\times [0,\infty)$, $\phi_t(\omega,\cdot)$ is a probability measure on $(E,\mathcal{B}(E))$;
\item for each $\tilde C\in \mathcal B(E)$, $\phi_t(\tilde C)$ is predictable. 
\end{itemize}

Fix a terminal time $T>0$, we can define the integral
$$
\int_0^T \int_E C_t(e) q(d t d e)=\int_0^T \int_E C_t(e) p(d t d e)-\int_0^T \int_E C_t(e) \phi_t(d e) d A_t,
$$
under the condition
$$
\mathbb{E}\left[\int_0^T \int_E\left|C_t(e)\right| \phi_t(d e) d A_t\right]<\infty .
$$
Indeed, the process $\int_0^{\cdot} \int_E C_t(e) q(d t d e)$ is a martingale. Note that  $\int_a^b$ denotes an integral on $(a, b]$ if $b<\infty$, or on $(a, b)$ if $b=\infty$.

The following spaces are frequently called in the sequel.
\begin{itemize}
\item $\mathbb{L}^0$ denotes the space of all real-valued, $\mathcal{G}_T$-measurable random variables.
\item $\mathbb{L}^p :=\left\{\xi \in \mathbb{L}^0 :\|\xi\|_p :=\left\{E\left[|\xi|^p\right]\right\}^{\frac{1}{p}}<\infty\right\}$, for all $p \in[1, \infty)$.
\item $\mathbb{L}^{\infty}:=\left\{\xi \in \mathbb{L}^0:\|\xi\|_{\infty} := {esssup}_{\omega \in \Omega}|\xi(\omega)|<\infty\right\}$.
\item  $\mathbb{C}^0$ denotes the set of all real-valued continuous  process adapted to $\mathbb{G}$ on $[0, T]$. 
\item Let $\mathbb{K}$ be the subset of $\mathbb{C}^0$ that consists of all real-valued increasing and continuous adapted process starting from 0, and $\mathbb K^p$ is a subset of $\mathbb{K}$ such that for each $X\in \mathbb K^p$, $X_T\in \mathbb{L}^p$.
\item  ${S}^0$ denotes the set of real-valued, adapted and c\`adl\`ag processes $\left\{Y_t\right\}_{t \in[0, T]}$. 
\item For any $\left\{\ell_t\right\}_{t \in[0, T]} \in  S^0$, define $\ell_*^{ \pm} := \sup _{t \in[0, T]}\left(\ell_t\right)^{ \pm}$. Then
$$
\ell_* := \sup _{t \in[0, T]}\left|\ell_t\right|=\sup _{t \in[0, T]}\left(\left(\ell_t\right)^{-} \vee\left(\ell_t\right)^{+}\right)=\sup _{t \in[0, T]}\left(\ell_t\right)^{-} \vee \sup _{t \in[0, T]}\left(\ell_t\right)^{+}=\ell_*^{-} \vee \ell_*^{+} .
$$
\item For any real $p \geq 1,\ {S}^p$ denotes the set of real-valued, adapted and c\`adl\`ag processes $\left\{Y_t\right\}_{t \in[0, T]}$ such that
$$
\|Y\|_{{S}^p}:=\mathbb{E}\left[\sup _{0 \leq t \leq T}\left|Y_t\right|^p\right]^{1 / p}<+\infty.
$$
Then $\left({S}^p,\|\cdot\|_{ {S}^p}\right)$ is a Banach space.
\item ${S}^{\infty}$ is the space of $\mathbb{R}$-valued càdlàg and $\mathbb{G}$-progressively measurable processes $Y$ such that
$$
\|Y\|_{{S}^\infty}:=\sup _{0 \leq t \leq T}\left\|Y_t\right\|_{\infty}<+\infty.
$$
\item For any $p \geq 1$, we denote by $\mathcal{E}^p$ the collection of all stochastic processes $Y$ such that $e^{|Y|} \in$ $S^p$.   We write $Y \in \mathcal{E}$ if $Y \in \mathcal{E}^p$ for any $p \geq 1$.
\item  $L^{2}(A)$ is the space of all $\mathbb{G}$-progressively measurable  processes $Y$ such that
$$
\|Y\|_{L^{2}(A)}^2=\mathbb{E}\left[\int_0^T \left|X_s\right|^2 d A_s\right]<\infty .
$$
\item $\mathbb{H}^p$ is the space of real-valued and $\mathbb{G}$-progressively measurable processes $Z$ such that
$$
\|Z\|_{\mathbb{H}^p}^p:=\mathbb{E}\left[\left(\int_0^T\left|Z_t\right|^2 d t\right)^{\frac{p}{2}}\right]<+\infty .
$$
\item $L^0\left(\mathscr{B}(E)\right)$ denotes the space of $\mathcal B(E)$-measurable functions.  For $u \in L^0\left(\mathscr{B}(E)\right)$, define
$$
L^2(E,\mathcal{B}(E),\phi_t(\omega,dy)):=\left\{\left\|u\right\|_t:=\left(\int_E\left|u(e)\right|^2  \phi_t(d e)\right)^{1 / 2}<\infty\right\}.
$$
\item $H^{2,p}_{\nu}$ is the space of predictable processes $U$ such that 
$$
\|U\|_{H_\nu^{2,p}}:=\left(\mathbb{E}\left[\int_{[0, T]} \int_E\left|U_s(e)\right|^2 \phi_s(de) d A_s\right ]^{\frac{p}{2}}\right)^{\frac{1}{p}}<\infty.
$$
\item $H^{2,loc}_{\nu}$ is the space of predictable processes $U$ such that 
$$
\|U\|_{H_\nu^{2,loc}}:=\int_{[0, T]} \int_E\left|U_s(e)\right|^2 \phi_s(de) d A_s<\infty,\ \mathbb P-a.s.
$$
As in \cite{Confortola2013}, we say that $U, U^{\prime} \in H^{2,p}_{\nu}$ (respectively, $U, U^{\prime} \in H^{2,loc}_{\nu}$ ) are equivalent if they coincide almost everywhere with respect to the measure $\phi_t(\omega, d y) d A_t(\omega) \mathbb{P}(d \omega)$  and this happens if and only if $\left\|U-U^{\prime}\right\|_{H_{\nu}^{2,p}}=0$ (equivalently, $\left\|U-U^{\prime}\right\|_{H_{\nu}^{2,loc}}=0,\ \mathbb P-a.s.$). With a little abuse of notation, we  still denote $H^{2,p}_{\nu}$ (respectively, $H^{2,loc}_{\nu}$) the corresponding set of equivalence classes, endowed with the norm $\|\cdot\|_{H_\nu^{2,p}}$ (respectively, $\|\cdot\|_{H_\nu^{2,loc}} $). In addition, $H_{\nu}^{2,p}$ is a Banach space. 
\item $\mathbb{J}^{\infty}$ is the space of functions such that
$$
\|\psi\|_{\mathbb{J}^\infty}:=\left\| \left\|\psi\right\|_{{L}^{\infty}(\nu(\omega))}\right\|_{\infty}<\infty.
$$
\item $\mathcal S_{0,T}$ denotes the collection of $\mathbb{G}$-stopping times $\tau$ such that $0\le\tau\le T,\ \mathbb P-a.s.$. For any $\tau\in\mathcal S_{0,T}$, $\mathcal S_{\tau,T}$ denotes the collection of $\mathbb{G}$-stopping times $\tilde\tau$ such that $\tau\le\tilde\tau\le T,\ \mathbb P-a.s.$
\end{itemize}

For $u \in L^0\left(\mathscr{B}(E)\right)$ and $\lambda >0$, we introduce the positive predictable process denoted by
$$
j_\lambda (\cdot,u)=\int_E (\mathrm{e}^{\lambda  u(e)}-1-\lambda  u(e))\phi_{\cdot}(de).
$$

\subsection{Reflected BSDEs with a convex/concave generator driven by a marked point process}
As in \cite{foresta2021optimal}, a reflected BSDE driven by a marked point process is formulated as follows, $\mathbb P$-a.s.
\begin{equation}
\label{reflected BSDE}
\left\{\begin{array}{l}
Y_t=\xi+\int_t^T f\left(s,Y_s, U_s\right) d A_s +\int_t^T d K_s-\int_t^T \int_E U_s(e) q(d s d e), \quad 0 \leq t \leq T, \\
Y_t \geq L_t, \quad 0 \leq t \leq T, \\
\int_0^T\left(Y_{s^{-}}-L_{s^{-}}\right) d K_s=0.\\
(Y,U,K)\in \mathcal E\times H_{\nu}^{2,p}\times \mathbb K^p,\ \text{for each $p\ge 1$}.
\end{array}\right.
\end{equation}
The RBSDE (\ref{reflected BSDE}) with coefficient $(\xi ,f,L)$ is denoted by RBSDE $(\xi ,f,L)$.

We are now ready to state the general assumptions that will be adopted throughout the paper.
\hspace*{\fill}\\
\hspace*{\fill}\\
\noindent(\textbf{H1}) The process A is continuous, with $\|A_T\|_\infty<\infty$.
\hspace*{\fill}\\
\hspace*{\fill}\\

Assumption (H1) is on the dual predictable projection $A$ of the counting process $N$ corresponding to the measure $p$. We would like to emphasize that for $dA_t$, we do not require absolute continuity with respect to the Lebesgue measure $dt$. That is to say the compensator $v$ does not admit a decomposition of the form $v_t(\omega,dt,dx)=\zeta(\omega,t,x)\lambda_t(dx)dt$ as in \cite{karoui2016quadratic}.

\hspace*{\fill}\\
\hspace*{\fill}\\
\noindent(\textbf{H2}) The obstacle process $L$ is continuous with $L_T\le\xi$.
\hspace*{\fill}\\
\hspace*{\fill}\\

\noindent(\textbf{H3})
For every $\omega \in \Omega, \ t \in[0, T],\ r \in \mathbb{R}$, the mapping
$
f(\omega, t, r, \cdot):L^0(\mathcal{B}(E)) \rightarrow \mathbb{R}
$
satisfies:
for every $U \in {H_\nu^{2,2}}$,
$$
(\omega, t, r) \mapsto f\left(\omega, t, r, U_t(\omega, \cdot)\right)
$$
is Prog $\otimes \mathscr{B}(\mathbb{R})$-measurable.
\hspace*{\fill}\\

The following integrability and growth conditions are naturally assumed in literature. The so-called quadratic-exponential growth condition is aroused from a utility maximization problem. See for instance \cite{Morlais_2009}. We will present an application that fulfills this assumption in section \ref{section appl}. 
\hspace*{\fill}\\

\noindent(\textbf{H4})

\textbf{(a) (Continuity condition)} 
For every $\omega \in \Omega,\ t \in[0, T],\ y \in \mathbb{R}$, $u\in L^2(E,\mathcal{B}(E),\phi_t(\omega,dy))$, $(y, u) \longrightarrow f(t, y, u)$ is continuous.
\hspace*{\fill}\\
\hspace*{\fill}\\

\textbf{(b) (Lipschitz condition in $y$)} 
There exist $\tilde{\beta}\geq 0$, such that for every $\omega \in \Omega,\ t \in[0, T],\ y, y^{\prime} \in \mathbb{R}$, $u\in L^2(E,\mathcal{B}(E),\phi_t(\omega,dy))$, we have
$$
\begin{aligned}
& \left|f(\omega, t, y, u(\cdot))-f\left(\omega, t, y^{\prime}, u(\cdot)\right)\right| \leq \tilde{\beta}\left|y-y^{\prime}\right|.
\end{aligned}
$$
\hspace*{\fill}\\
\hspace*{\fill}\\

\textbf{(c) (Quadratic-exponential growth condition)}
For all $t\in[0, T]$, $(y,u) \in \mathbb{R} \times  L^2(E,\mathcal{B}(E),\phi_t(\omega,dy)):\ \mathbb{P}$-a.s,
$$
\underline{q}(t, y, u)=-\frac{1}{\lambda} j_{\lambda}(t,- u)-\alpha_t-\beta|y| \leq f(t, y, u) \leq \frac{1}{\lambda} j_{\lambda}(t, u)+\alpha_t+\beta|y|=\bar{q}(t, y, u) .
$$
where $\{\alpha_t\}_{0 \leq t \leq T}$ is  a progressively measurable nonnegative stochastic process.

\hspace*{\fill}\\
\hspace*{\fill}\\

\textbf{(d) Integrability condition} We assume necessarily,
{\color{blue}
$$
\forall p>0, \quad \mathbb{E}\left[\exp \left\{p \lambda e^{\beta A_T}(|\xi|\vee L^+_*)+p\lambda  \int_0^T e^{\beta A_s}\alpha_s d A_s\right\}+\int_0^T\alpha_s^2dA_s\right]<\infty.
$$
}
\hspace*{\fill}\\

\textbf{(e) (Convexity/Concavity condition)}
For each $(t, y) \in[0, T] \times \mathbb{R}$, $u\in L^2(E,\mathcal{B}(E),\phi_t(\omega,dy)),\ u \rightarrow f(t, y, u)$ is convex or concave.
\hspace*{\fill}\\

Under appropriate  assumptions, we aim to find a unique solution $(Y, U, K) \in \mathcal E\times H_{\nu}^{2,p}\times \mathbb K^p$, for each $p\ge1$, for RBSDE $\left(\xi, f, L\right)$ in this paper.

\begin{remark}
We get rid of the Brownian term as in e.g.\cite{Confortola2013} for simplicity. The discussion on Brownian motion driven quadratic-exponential RBSDE paralleled with this paper can be found in \cite{bayraktar2012quadratic}. 
More precisely, consider the following RBSDE,
\begin{equation}
\label{BSDE with B}
\left\{\begin{array}{l}
Y_t=\xi+\int_t^T f\left(s,Y_s, U_s\right) d A_s +\int_t^T g\left(s,Y_s, Z_s\right) d s+\int_t^T d K_s-\int_t^T Z_s d B_s-\int_t^T \int_E U_s(e) q(d s d e), \quad 0 \leq t \leq T, \quad \mathbb{P} \text {-a.s. } \\
Y_t \geq L_t, \quad 0 \leq t \leq T,\quad \mathbb{P} \text {-a.s. } \\
\int_0^T\left(Y_{s^{-}}-L_{s^{-}}\right) d K_s=0, \quad \mathbb{P} \text {-a.s. }
\end{array}\right.
\end{equation}

Assume that $g(t,y,z)$ satisfies additionally:\\
(i) for all $t \in[0, T]$, for all $y \in \mathbb{R}, z \longmapsto g(t, y, z)$ is convex or concave;\\
(ii) for all $(t, z) \in[0, T] \times \mathbb{R}$,
$$
\forall\left(y, y^{\prime}\right) \in \mathbb{R}^2, \quad\left|g(t, y, z)-g\left(t, y^{\prime}, z\right)\right| \leq \tilde{\beta}\left|y-y^{\prime}\right| ;
$$
(iii) $g$ has the following quadratic growth: there exists a positive constant $\gamma$ such that
$$
\forall(t, y, z) \in[0, T] \times \mathbb{R} \times \mathbb{R}^d, \quad|g(t, y, z)| \leq \alpha_t+\beta|y|+\frac{\gamma}{2}|z|^2.
$$

Combining our discussion with that in \cite{bayraktar2012quadratic}, the well-posedness of RBSDEs including Brownian term (\ref{BSDE with B}) as in \cite{foresta2021optimal} holds well without additional technical issue.  
\end{remark}

\begin{remark}
\label{remark_RBSDEJ}
Our techniques are easily be adopted to RBSDEs with jumps as in \cite{hamadene2003reflected}, in which the compensator admits a decomposition $\nu(dtde)=\phi_t(de)dt$.
More generally, RBSDEs of the following form, with a general generator $f(t,y,z,u)$, where $C_t$ is a predictable, continuous non-decreasing process starting from zero, are also included in our scope with a parallel discussion.
\begin{equation}
\label{BSDEJ with B}
\left\{\begin{array}{l}
Y_t=\xi+\int_t^T f\left(s,Y_s, Z_s, U_s\right) d C_s +\int_t^T d K_s-\int_t^T Z_s d B_s-\int_t^T \int_E U_s(e) q(d s d e), \quad 0 \leq t \leq T, \quad \mathbb{P} \text {-a.s. } \\
Y_t \geq L_t, \quad 0 \leq t \leq T,\quad \mathbb{P} \text {-a.s. } \\
\int_0^T\left(Y_{s^{-}}-L_{s^{-}}\right) d K_s=0, \quad \mathbb{P} \text {-a.s. }
\end{array}\right.
\end{equation}

We assume the structural and growth conditions on $f$ read as follows, where the differential form  is understood as integration on any measurable subset of $[0,T]$.
\hspace*{\fill}\\

\noindent(\textbf{H3*}) For every $\omega \in \Omega,\ t \in[0, T],\ r\in \mathbb{R},\ z\in\mathbb R^d$, the mapping
$
f(\omega, t, r, z,\cdot):L^2(E,\mathcal{B}(E),\phi_t(\omega,dy))\rightarrow \mathbb{R}
$
satisfies:
for every $U \in {H_\nu^{2,2}}$,
$$
(\omega, t, r,z) \mapsto f\left(\omega, t, r,z, U_t(\omega, \cdot)\right)
$$
is Prog $\otimes \mathscr{B}(\mathbb{R})\otimes \mathscr{B}(\mathbb{R}^d)$-measurable.

\hspace*{\fill}\\
\hspace*{\fill}\\
\noindent(\textbf{H4*})

\textbf{(a) (Continuity condition)} 
For every $\omega \in \Omega, t \in[0, T], y \in \mathbb{R}$, $z\in\mathbb R^d$ $u\in L^2(E,\mathcal{B}(E),\phi_t(\omega,dy))$, $(y,z, u) \longrightarrow f(t, y,z, u)$ is continuous.
\hspace*{\fill}\\
\hspace*{\fill}\\

\textbf{(b) (Lipschitz condition in $y$)}  
There exists $\beta\geq 0$, such that for every $\omega \in \Omega,\ t \in[0, T],\ y, y^{\prime} \in \mathbb{R}$, $z\in\mathbb R^d$, \ $u\in L^2(E,\mathcal{B}(E),\phi_t(\omega,dy))$, we have
$$
\begin{aligned}
& \left|f(\omega, t, y,z, u(\cdot))-f\left(\omega, t, y^{\prime},z, u(\cdot)\right)\right| dC_t\leq \beta\left|y-y^{\prime}\right|(dA_t+dt).
\end{aligned}
$$

\hspace*{\fill}\\

\textbf{(c) (Quadratic-exponential growth condition)}
For all $t \in[0, T], \ (y,z, u) \in \mathbb{R} \times\mathbb R^d\times L^2(E,\mathcal{B}(E),\phi_t(\omega,dy)):\ \mathbb{P}$-a.s, there exists $\lambda>0$ such that
\begin{equation}
\label{fc condition}
\begin{aligned}
-\left(\alpha_t+\beta|y|+\frac{\gamma}{2}|z|^2 \right)dt-\left(-\alpha_t-\beta|y|-\frac{\gamma}{2}|z|^2 -\frac{1}{\lambda} j_{\lambda}(t, -u)\right)dA_t\\
\le f(t, y, z,u)dC_t
\leq \left(\alpha_t+\beta|y|+\frac{1}{\lambda} j_{\lambda}(t, u)\right)dA_t+\left(\alpha_t+\beta|y|+\frac{\gamma}{2}|z|^2 \right)dt,
\end{aligned}
\end{equation}
where $\{\alpha_t\}_{0 \leq t \leq T}$ is  a progressively measurable nonnegative stochastic process.

\hspace*{\fill}\\
\hspace*{\fill}\\

\textbf{(d) (Integrability condition)}
We assume necessarily,
$$
\forall p>0, \quad \mathbb{E}\left[\exp \left(p\left(\left|\xi\right|+\int_0^T \alpha_s (d A_s+ds)\right)\right)+\int_0^T\alpha_s^2(dA_s+ds)\right]<+\infty.
$$
\hspace*{\fill}\\

\textbf{(e) (Convexity/Concavity condition)}  
 For all $t \in[0, T]$ and $y \in \mathbb{R}$, $(z,u)\in\mathbb R^d\times L^2(E,\mathcal{B}(E),\phi_t(\omega,dy))\longmapsto f(t, y, z,u)$ is 
jointly convex or concave.
\hspace*{\fill}\\

Other assumptions on the process $A$, the barrier $L$ and the generator $f$ are inherited from ours. Based on the well-posedness of (\ref{BSDEJ with B}), we provide a financial application on American option pricing in a jump market in section \ref{section appl}.
\end{remark}

\subsection{Lipschitz RBSDEs  driven by a marked point process with bounded terminal and bounded obstacle }

In order to obtain the well-posedness of the quadratic exponential RBSDE (\ref{reflected BSDE}), we first restate the well-posedness result on Lipschitz RBSDEs driven by a marked point process with bounded terminal and bounded obstacle. The well-podedness of Lipschitz RBSDEs plays an important role in the sequel, since inspired by Kobylanski \cite{kobylanski2000backward}, we will take advantage of an approximation procedure to approximate the quadratic generator by a sequence of Lipschitz generators. The well-posedness result for Lipschitz RBSDEs  is inherited from Foresta \cite[Theorem 4.1]{foresta2021optimal} and the a priori estimate Lemma 3.2 therein.  For the convenience of later reference, it is enough to restate the result in \cite{foresta2021optimal} under stronger assumptions that the terminal and obstacle are bounded. The additional assumptions are listed as follows.

\noindent(\textbf{H4'})

(\textbf{a})
There exists a constant $M>0$ such that $L_*+|\xi|\le M, \ \mathbb P-a.s.$
\hspace*{\fill}\\
\hspace*{\fill}\\

\textbf{(b)} 
There exist $L_f \geq 0,\ L_U \geq 0$ such that for every $\omega \in \Omega,\ t \in[0, T],\ y, y^{\prime} \in \mathbb{R}$, $u, u^{\prime} \in L^2(E,\mathcal{B}(E),\phi_t(\omega,dy))$ we have
$$
\left|f(\omega, t, y, u(\cdot))-f\left(\omega, t, y^{\prime}, u^{\prime}(\cdot)\right)\right| \leq L_f\left|y-y^{\prime}\right| +L_U\left(\int_E\left|u(e)-u^{\prime}(e)\right|^2 \phi_t(\omega, d e)\right)^{1 / 2}.
$$
\hspace*{\fill}\\
\hspace*{\fill}\\

\textbf{(c)} 
We have
$$
\mathbb{E}\left[\int_0^T |f(s, 0,0)|^2 d A_s\right]<\infty.
$$

The well-posedness result on Lipschitz RBSDE (\ref{reflected BSDE}) reads as follows.

\begin{thm}
\label{thm_lip}
Let assumptions (H1), (H2), (H3) and (H4') hold, then,

(i) there exists a solution $(Y,U,K)\in L^2(A)\times H_{\nu}^{2,2} \times \mathbb K^2$ to (\ref{reflected BSDE}). Moreover, with the help of \cite[Lemma 3.2]{foresta2021optimal}, $Y\in S^2$.

(ii) If in addition, there exists a positive constant $M_0$ such that , for each $t\in[0,T]$ and $(y,u)\in\mathbb R\times  L^2(E,\mathcal{B}(E),\phi_t(\omega,dy))$
$$
|f(t,y,u)|\le M_0.
$$
Then, there exists a unique solution $(Y,U,K)\in S^\infty\times J^\infty\times \mathbb K^2$.
\end{thm}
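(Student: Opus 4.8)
The plan is to derive both parts from the Lipschitz well-posedness theorem of Foresta \cite[Theorem 4.1]{foresta2021optimal} and its a priori estimate \cite[Lemma 3.2]{foresta2021optimal}, after observing that (H4') is merely a strengthening of the hypotheses used there. For part~(i) I would check the matching of assumptions: (H1) supplies a continuous, non-decreasing dual predictable projection $A$ with $\|A_T\|_\infty<\infty$; (H2) a continuous obstacle $L$ with $L_T\le\xi$; (H3) the measurability of $f$; (H4')(b) the joint Lipschitz property of $f$ in $(y,u)$; (H4')(c) that $s\mapsto f(s,0,0)$ lies in $L^2(A)$; and (H4')(a) that $\xi\in\mathbb L^\infty\subset\mathbb L^2$ with $L^+_*$ bounded. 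Hence \cite[Theorem 4.1]{foresta2021optimal} applies and produces a solution $(Y,U,K)\in L^2(A)\times H_\nu^{2,2}\times\mathbb K^2$ of~(\ref{reflected BSDE}). Since the coefficients are square integrable, feeding this solution into \cite[Lemma 3.2]{foresta2021optimal} controls $\mathbb E\big[\sup_{0\le t\le T}|Y_t|^2\big]$ by a constant depending only on $\|A_T\|_\infty$, the Lipschitz constants, $\|\xi\|_2$, $\mathbb E\big[\int_0^T|f(s,0,0)|^2\,dA_s\big]$ and $\mathbb E[(L^+_*)^2]$; in particular $Y\in S^2$.

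For part~(ii), take the solution $(Y,U,K)$ from~(i) and set $c:=M+M_0\|A_T\|_\infty$. First I would bound $Y$: the flat-off condition and the continuity of $A$, $L$, $K$ endow the solution with the Snell-envelope representation $Y_t=\mathop{\mathrm{ess\,sup}}_{\tau\in\mathcal S_{t,T}}\mathbb E\big[\int_t^\tau f(s,Y_s,U_s)\,dA_s+L_\tau\mathbf 1_{\{\tau<T\}}+\xi\,\mathbf 1_{\{\tau=T\}}\,\big|\,\mathcal G_t\big]$, and since $|f|\le M_0$, $|\xi|\le M$ and $L_*\le M$, the bracket is bounded in modulus by $c$, so $\|Y\|_{S^\infty}\le c$. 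Next I would bound $U$: as $A$ and $K$ are continuous (hence $\Delta A\equiv0$), the only jumps of $Y$ come from the martingale $\int_0^{\cdot}\int_E U_s(e)\,q(ds\,de)$, so at each jump time $T_n$ of the marked point process with mark $\zeta_n$ one has $\Delta Y_{T_n}=U_{T_n}(\zeta_n)$ and therefore $|U_{T_n}(\zeta_n)|\le 2\|Y\|_{S^\infty}\le 2c$, $\mathbb P$-a.s.\ for every $n$. Invoking the standard fact for marked point processes that the equivalence class of a predictable $U$ in $H_\nu^{2,2}$ is determined $\nu$-a.e.\ by its restriction to the graph $\{(T_n,\zeta_n):n\ge1\}$, this pointwise bound upgrades to $\|U\|_{J^\infty}\le 2c$; then $K_T=Y_0-\xi-\int_0^T f(s,Y_s,U_s)\,dA_s+\int_0^T\int_E U_s(e)\,q(ds\,de)$ is bounded, so $K\in\mathbb K^2$. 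Finally, since $\|A_T\|_\infty<\infty$ and each $\phi_t$ is a probability measure, $S^\infty\subset L^2(A)$ and $J^\infty\subset H_\nu^{2,2}$, so every $S^\infty\times J^\infty\times\mathbb K^2$ solution is in particular an $L^2(A)\times H_\nu^{2,2}\times\mathbb K^2$ solution, for which \cite[Theorem 4.1]{foresta2021optimal} grants uniqueness (alternatively, uniqueness follows from the usual linearisation argument, applying Itô's formula to $|Y^1-Y^2|^2$ and using the flat-off conditions of the two solutions).

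I expect the only genuinely delicate point to be the passage from the pointwise control of the jumps $\Delta Y_{T_n}$ to the $L^\infty(\nu)$-bound on $U$: one must argue, using that $\nu$ is the $\mathbb G$-compensator of $p$ and that a $\phi_t(de)\,dA_t\,\mathbb P$-null predictable set is precisely one not charged by the jumps of the marked point process, that a representative of the equivalence class $U$ may be taken with $|U_t(e)|\le 2c$ for $\nu(\omega)\,\mathbb P$-a.e.\ $(\omega,t,e)$. Everything else is a transcription of \cite[Theorem 4.1]{foresta2021optimal} and \cite[Lemma 3.2]{foresta2021optimal} to the present, more restrictive, bounded setting, with the Snell-envelope representation of reflected solutions used as a black box.
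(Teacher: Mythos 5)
Your proposal is correct and follows essentially the same route as the paper: part (i) is read off from \cite[Theorem 4.1 and Lemma 3.2]{foresta2021optimal}, and part (ii) uses the nonlinear Snell-envelope representation of \cite[Proposition 3.1]{foresta2021optimal} to get $\|Y\|_{S^\infty}\le M_0\|A_T\|_\infty+M$ followed by the jump-identity/compensator argument to place $U$ in $J^\infty$. The only difference is that you spell out explicitly what the paper delegates to \cite[Corollary 1]{Morlais_2009} (the passage from $\Delta Y_{T_n}=U_{T_n}(\zeta_n)$ to the $\nu$-a.e.\ bound via predictability of $\{|U|>2c\}$) and you record the uniqueness argument, both of which are sound.
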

\begin{proof}
Assertion (i) is directly inherited from  \cite[Theorem 4.1 ]{foresta2021optimal} and the a priori estimate Lemma 3.2 therein. For assertion (ii), note that by \cite[Proposition 3.1]{foresta2021optimal},
\begin{equation}
Y_t=\underset{\tau \in \mathscr{S}_{t,T}}{\operatorname{ess} \sup } \mathbb{E}\left[\int_t^\tau f(s,Y_s,U_s) d A_s+h_\tau {1}_{\{\tau<T\}}+\xi {1}_{\{\tau \geq T\}} \mid \mathscr{G}_t\right] .
\end{equation}
Then, $\|Y\|_{S^\infty}\le M_0\|A_T\|_{\infty}+M<\infty$.
Therefore, in the spirit of \cite[ Corollary 1 ]{Morlais_2009}, we are able to find $U$ in $J^\infty$ by a truncation technique.
\end{proof}

Thereafter, We will break the proof of well-posedness of the quadratic exponential RBSDE (\ref{reflected BSDE}) with a convex/concave generator  down into the following steps:
\begin{itemize}
\item Prove a comparison theorem for quadratic exponential RBSDE with a convex/concave generator by applying $\theta$-method as in \cite{briand2008quadratic} like Bayraktar and Yao \cite{bayraktar2012quadratic}. (Section \ref{section 3})
\item Make use of this comparison theorem to prove uniform a priori estimates in quadratic exponential RBSDEs. (Section \ref{section priori})
\item With the help of those a priori estimates to prove the well-posedness with bounded terminal \& bounded obstacle (Section \ref{section existence for bdd}) and   unbounded terminal \& unbounded obstacle (Section \ref{section existence for unbdd}).
\end{itemize}

\section{A comparison theorem for convex / concave quadratic exponential RBSDEs}
\label{section 3}

We first prove a comparison theorem for quadratic exponential RBSDE with a convex generator by applying $\theta$-method as in \cite{briand2008quadratic}. The concave generator scenario can be handled with a parallel discussion. The idea borrows from Theorem 5.1 in \cite{bayraktar2012quadratic}. The uniqueness is a straightforward corollary of the comparison theorem.
\begin{thm}
\label{comparison thm}
Let $(\xi, f, L),(\hat{\xi}, \hat{f}, \widehat{L})$ be two parameter sets and let $(Y, U, K)($ resp. $(\widehat{Y}, \widehat{U}, \widehat{K}))$ be a solution of $R B S D E(\xi, f, L)$ (resp. $R B S D E(\hat{\xi}, \hat{f}, \widehat{L}))$ such that $\mathbb P$-a.s., $\xi \leq \hat{\xi}$ and that $L_t \leq \widehat{L}_t$ for any $t \in[0, T]$.
For  process $\alpha$ and constants $\beta, \tilde{\beta} \geq 0, \lambda >0$, suppose (H1)-(H2) and (H4)(d) hold, $Y,\ \hat Y\in\mathcal E$, $U, \ \hat U\in H_{\nu}^{2,p}$ and $K,\ \hat K\in \mathbb K^p$, for each $p\ge1$.  If in addition either of the following two holds:
\begin{enumerate}
    \item[(i)] $f$ satisfies (H3), {\color{red} (H4)(a-b)}, $f$ is convex in $u$, $\Delta f(t) := f\left(t, \widehat{Y}_t, \widehat{U}_t\right)-\hat{f}\left(t, \widehat{Y}_t, \widehat{U}_t\right) \leq 0$, dt $\otimes d \mathbb P$-a.e., and \\
    $$f(t,y,u)\le \alpha_t+\beta|y|+\frac{1}{\lambda}j_{\lambda}(t,u);$$
    \item[(ii)] $\hat{f}$ satisfies (H3), {\color{red} (H4)(a-b)}, $\hat{f}$ is convex in $u$, $\Delta f(t) := f\left(t, Y_t, U_t\right)-\hat{f}\left(t, Y_t, U_t\right) \leq 0,\ d t \otimes d \mathbb P$-a.e., and\\ 
    $$\hat f(t,y,u)\le \alpha_t+\beta|y|+\frac{1}{\lambda}j_{\lambda}(t,u);$$
\end{enumerate}
then it holds $\mathbb P$-a.s. that $Y_t \leq \widehat{Y}_t$ for any $t \in[0, T]$
\end{thm}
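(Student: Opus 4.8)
The plan is to adapt the $\theta$-method comparison argument of Bayraktar and Yao to the marked point process setting, working under case (i) (case (ii) being symmetric, with the roles of the two solutions interchanged). Fix $\theta\in(0,1)$ and consider the process $Y^\theta_t := Y_t - \theta\widehat Y_t$ together with $U^\theta_t := U_t - \theta\widehat U_t$ and $K^\theta_t := K_t - \theta\widehat K_t$. Subtracting $\theta$ times RBSDE$(\hat\xi,\hat f,\widehat L)$ from RBSDE$(\xi,f,L)$ gives a (non-reflected-looking, but with a bounded-variation term $dK^\theta$) equation for $Y^\theta$ with terminal value $\xi-\theta\hat\xi$ and driver
\[
f(t,Y_t,U_t) - \theta\hat f(t,\widehat Y_t,\widehat U_t).
\]
The key algebraic step is to bound this driver from above by something affine in $(Y^\theta_t,U^\theta_t)$: writing
\[
f(t,Y_t,U_t) - \theta\hat f(t,\widehat Y_t,\widehat U_t)
= \bigl[f(t,Y_t,U_t) - f(t,Y^\theta_t + \theta\widehat Y_t, U_t)\bigr] + \bigl[f(t,Y^\theta_t+\theta\widehat Y_t,U_t) - \theta f(t,\widehat Y_t,\widehat U_t)\bigr] + \theta\,\Delta f(t),
\]
the first bracket is controlled by the Lipschitz condition (H4)(b) (hence $\le \tilde\beta|Y^\theta_t|$... actually $=0$ since $Y^\theta_t+\theta\widehat Y_t = Y_t$, so I should instead compare $f(t,Y_t,U_t)$ directly), the convexity of $u\mapsto f(t,y,u)$ together with the growth bound $f(t,y,u)\le\alpha_t+\beta|y|+\tfrac1\lambda j_\lambda(t,u)$ handles the $U$-dependence via the inequality
\[
f(t,y,u) - \theta f(t,y',u') \le (1-\theta)\Bigl[\alpha_t + \beta\,\Bigl|\tfrac{y-\theta y'}{1-\theta}\Bigr| + \tfrac1\lambda j_\lambda\Bigl(t,\tfrac{u-\theta u'}{1-\theta}\Bigr)\Bigr] + \beta\,\text{(correction terms)},
\]
and the last term $\theta\,\Delta f(t)\le 0$ is dropped. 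The upshot is a differential inequality of the form $-dY^\theta_t \le \bigl(\tfrac1\lambda j_\lambda(t,U^\theta_t/(1-\theta))\cdot(1-\theta) + \text{affine in }|Y^\theta_t| + (1-\theta)(\alpha_t+\cdots)\bigr)dA_t + dK^\theta_t - \int_E U^\theta_t(e)q(dtde)$, where the $dK^\theta$ term needs care because $\widehat K$ carries a minus sign.

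Next I would linearize and exponentiate. The presence of $j_\lambda$ is precisely what makes the exponential transform work: set $P_t := \exp\{\lambda e^{\beta A_t} (Y^\theta_t)^+ / (1-\theta)\}$ or more precisely follow the Bayraktar--Yao device of introducing the process $\lambda e^{\beta A_t}$ as an integrating factor together with the exponential of (a suitable transform of) $Y^\theta/(1-\theta)$, and apply Itô's formula for càdlàg semimartingales / the exponential change of variable for jump processes. The quadratic-exponential term $\tfrac1\lambda j_\lambda$ is designed so that, after this transformation, the $dA$-drift coming from the $U$-dependence is absorbed exactly, leaving a nonpositive drift (using $\alpha\ge0$, $\beta\ge0$, and that the extra $\alpha,\beta|Y|$ contributions are swept into $dA_s$-integrals that the integrability condition (H4)(d) controls) plus the $dK^\theta$ term plus a local martingale. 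To deal with $dK^\theta = dK - \theta\,d\widehat K$: on the set $\{Y^\theta_{t^-} > 0\}$ — which is the only place that matters after taking positive parts, since we ultimately want to show $(Y^\theta)^+$ is small — we have $Y_{t^-} > \theta\widehat Y_{t^-} \ge \theta \widehat L_{t^-} \ge \theta L_{t^-}$; combined with the Skorokhod condition $\int(Y_{s^-}-L_{s^-})dK_s=0$ this does not immediately kill $dK$, so instead I expect to keep $dK_s$ as a genuine increasing term and absorb it using the flat-off condition more cleverly, or to localize so that on $\{Y^\theta>0\}$ one shows $dK$ contributes nonpositively after the exponential weighting; meanwhile $-\theta\,d\widehat K \le 0$ is harmless. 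This interplay is the first place where reflection enters nontrivially.

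Then I would take conditional expectations: after the exponential transform and localization by stopping times $\tau_n\uparrow T$ (needed because $Y^\theta\in\mathcal E$ only gives exponential moments, and the stochastic integral is only a local martingale), I obtain
\[
\exp\Bigl\{\lambda e^{\beta A_t}\tfrac{(Y^\theta_t)^+}{1-\theta}\Bigr\} \le \mathbb E\Bigl[\exp\Bigl\{\lambda e^{\beta A_T}\tfrac{(\xi-\theta\hat\xi)^+}{1-\theta} + \lambda\int_t^T e^{\beta A_s}\alpha_s\,dA_s\Bigr\}\,\Big|\,\mathscr G_t\Bigr],
\]
using the uniform integrability furnished by (H4)(d) to pass to the limit $n\to\infty$ (dominated convergence with the $p$-th exponential moments). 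Finally, let $\theta\uparrow1$: since $\xi\le\hat\xi$ implies $(\xi-\theta\hat\xi)^+ \le (1-\theta)\hat\xi^+ \le (1-\theta)|\hat\xi|$ (or one bounds it by $(1-\theta)$ times an integrable exponent), the right-hand side converges to something like $\mathbb E[\exp\{\lambda e^{\beta A_T}|\hat\xi| + \lambda\int_t^T e^{\beta A_s}\alpha_s dA_s\}\mid\mathscr G_t]$ raised to power $0$, i.e.\ to $1$, while the left-hand side, using $(Y^\theta_t)^+/(1-\theta) = (Y_t-\theta\widehat Y_t)^+/(1-\theta) \to (Y_t - \widehat Y_t)^+ + \widehat Y_t^+\cdot 0$... more precisely $(Y_t - \theta\widehat Y_t)^+ \ge (Y_t - \widehat Y_t)^+$ and dividing by $(1-\theta)$ forces $\exp\{\lambda e^{\beta A_t}(Y_t-\widehat Y_t)^+/(1-\theta)\}$ to stay bounded only if $(Y_t-\widehat Y_t)^+ = 0$. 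Hence $Y_t\le\widehat Y_t$ for all $t$, $\mathbb P$-a.s., by continuity of $Y-\widehat Y$ (both are in $\mathcal E$ with continuous... actually càdlàg; use right-continuity and a countable dense set). The main obstacle I anticipate is twofold: getting the $\theta$-convexity inequality for the driver to close \emph{exactly} against the $j_\lambda$ term after the exponential weighting $e^{\beta A_t}$ (the $A$-dependence of the weight interacts with $dA_t$ and must produce the right sign — this is the technical heart, mirroring Lemma-level computations in Bayraktar--Yao), and handling the reflection term $dK^\theta$ so that it does not spoil the supermartingale property — I expect the flat-off condition plus the sign $Y^\theta_{t^-}>0 \Rightarrow Y_{t^-} > L_{t^-}$ to be the resolution, but making this rigorous alongside the localization is where the delicate bookkeeping lies.
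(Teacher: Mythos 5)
Your overall strategy is the paper's: $\theta$-method, convexity of $f$ in $u$ combined with the one-sided growth bound to absorb the $U$-dependence into $j_\lambda$ of the scaled difference, exponential transform with an $A$-dependent integrating factor, localization, conditional expectation, and $\theta\uparrow 1$. The convexity inequality you write is exactly the paper's estimate (3.2), and dropping $\theta\,\Delta f\le 0$ is used the same way. Two remarks on execution: the paper does not take positive parts inside the exponential; it applies It\^o to $\Gamma_t=\exp\{\zeta_\theta e^{\tilde A_t}\tilde Y_t\}$ with $\tilde A_t=\int_0^t a_s\,dA_s$ built from a sign-dependent Lipschitz quotient $a_t$, and the sign bookkeeping you would push into $(Y^\theta)^+$ is instead handled by a three-case analysis (Lemma 3.2 / Appendix A) according to the signs of $Y_t$, $\widehat Y_t$, $\tilde Y_t$; and the calibration $\zeta_\theta=\lambda e^{\tilde\beta\|A_T\|_\infty}/(1-\theta)$ is what makes the $j_\lambda$-absorption inequality close with the correct sign, which is the ``technical heart'' you flag but do not carry out.

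The genuine gap is the reflection term, and it is precisely the point where you hedge. You correctly observe that on $\{\tilde Y_{t^-}>0\}$ the flat-off condition does not kill $dK$, but your two proposed fixes (make $dK$ contribute nonpositively after weighting, or absorb it into the supermartingale) are not what works, and your final displayed inequality --- with no $K$-dependence on the right-hand side --- cannot be correct as stated. The paper's resolution is different in kind: the $dK$ term is \emph{not} eliminated. Using the flat-off condition together with $L\le\widehat L\le\widehat Y$, on the support of $dK$ one has $Y_{s^-}=L_{s^-}\le\widehat Y_{s^-}$, hence $\tilde Y_{s^-}\le(1-\theta)Y_{s^-}^+$, so $\Gamma_{s^-}\le e^{\lambda e^{2\tilde\beta\|A_T\|_\infty}Y_{s^-}^+}$ \emph{uniformly in $\theta$}; this yields $\int_0^T D_s\Gamma_{s^-}\,dK_s\le\eta K_T$ with $\eta$ having all exponential moments, and the term $\mathbb E[\eta K_T\mid\mathscr G_t]$ is carried all the way into the final bound. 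The conclusion survives because the whole right-hand side sits inside $\frac{1-\theta}{\lambda}\ln(\cdot)$, so any fixed integrable contribution --- including the $K_T$ one --- is annihilated as $\theta\to1$. Without this step your argument does not close: the supermartingale property you want for the transformed process genuinely fails on the support of $dK$, and the theorem is recovered only by quantifying that failure and showing it is $O(1)$ inside the logarithm rather than by removing it.
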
 
\begin{proof}
Fix $\theta \in(0,1)$. We set $\tilde Y := Y-\theta \widehat{Y},\tilde U := U-\theta \widehat{U}$ and define an $\mathbb{G}$-progressively measurable process
$$
\begin{aligned}
a_t := & \mathbf{1}_{\left\{Y_t \geq 0\right\}}\left(\mathbf{1}_{\left\{Y_t \neq \widehat{Y}_t\right\}} \frac{\mathfrak{F}\left(t, Y_t, \widehat{U}_t\right)-\mathfrak{F}\left(t, \widehat{Y}_t, \widehat{U}_t\right)}{Y_t-\widehat{Y}_t}-\tilde{\beta} \mathbf{1}_{\left\{Y_t=\widehat{Y}_t\right\}}\right)-\tilde{\beta} \mathbf{1}_{\left\{Y_t<0 \leq \widehat{Y}_t\right\}} \\
& +\mathbf{1}_{\left\{Y_t \vee \widehat{Y}_t<0\right\}}\left(\mathbf{1}_{\left\{\tilde Y_t \neq 0\right\}} \frac{\mathfrak{F}\left(t, Y_t, U_t\right)-\mathfrak{F}\left(t, \theta \widehat{Y}_t, U_t\right)}{\tilde Y_t}-\tilde{\beta} \mathbf{1}_{\left\{\tilde Y_t=0\right\}}\right), \quad t \in[0, T],
\end{aligned}
$$
where $\mathfrak{F}$ stands for $f$ if (i) holds, and for $\hat{f}$ otherwise. It follows that $\tilde A_t := \int_0^t a_s d A_s, t \in[0, T]$ is an $\mathbb{G}$-adapted process. By \textcolor{red}{(H4)(b)}, it holds $d t \otimes d \mathbb P$-a.s. that $\left|a_t\right| \leq \tilde{\beta}$. Thus $\tilde A_* := \sup _{t \in[0, T]}\left|\tilde A_t\right| \leq \int_0^T\left|a_s\right| d A_s \leq \tilde{\beta} \|A_T\|_\infty, P$-a.s. In the light of (H3)(e) and the convexity of $\mathfrak{F}$ in $u$, it holds $d t \otimes d \mathbb P$-a.s. that
\begin{equation}
\label{cvx estimate}
\mathfrak{F}\left(t, y, U_t\right) \leq \theta \mathfrak{F}\left(t, y, \widehat{U}_t\right)+(1-\theta) \mathfrak{F}\left(t, y, \frac{\tilde U_t}{1-\theta}\right) \leq \theta \mathfrak{F}\left(t, y, \widehat{U}_t\right)+(1-\theta)(\alpha_t+\beta|y|)+\frac{1-\theta}{\lambda}j_{\lambda}(\frac{\tilde U_t}{1-\theta}), \quad \forall y \in \mathbb{R} .
\end{equation}
Let $\zeta_\theta := \frac{\lambda  e^{\tilde{\beta} \|A_T\|_\infty}}{1-\theta}$. Applying Itô's formula to the process $\Gamma_t := \exp \left\{\zeta_\theta e^{\tilde A_t} \tilde Y_t\right\}$, $t \in[0, T]$ yields that
$$
\begin{aligned}
& \Gamma_t=\Gamma_T+\int_t^T \Gamma_{s}\left[\zeta_\theta e^{\tilde A_s}F_s-\int_E(e^{\zeta_\theta e^{\tilde A_s}\tilde U_s}-\zeta_\theta e^{\tilde A_s}\tilde U_s-1)\phi_s(de)\right]dA_s\\
&\quad\quad-\int_t^T\int_E\Gamma_{s^-}(e^{\zeta_\theta e^{\tilde A_s}\tilde U_s}-1)q(dsde)+\zeta_\theta \int_{t}^{T} \Gamma_{s} e^{\tilde A_s}\left(d K_s-\theta d \widehat{K}_s\right)\\
&:=\Gamma_T+\int_t^T G_s d A_s-\int_t^T \int_E Q_s q(dsde)+\zeta_\theta \int_{t}^{T} \Gamma_{s} e^{\tilde A_s}\left(d K_s-\theta d \widehat{K}_s\right)
\end{aligned}
$$
where 
$$
F_t=f\left(t, Y_t, U_t\right)-\theta \hat{f}\left(t, \widehat{Y}_t, \widehat{U}_t\right)-a_t \tilde Y_t;
$$
$$
G_t= \Gamma_t \hat{G}_t=\Gamma_t\left( \zeta_\theta  e^{\tilde A_t}\left(f\left(t, Y_t, U_t\right)-\theta \hat{f}\left(t, \widehat{Y}_t, \widehat{U}_t\right)-a_t \tilde Y_t\right )-j_1(\zeta_\theta e^{\tilde A}\tilde U_t)\right);
$$
$$
Q_t=\Gamma_{t^-}(e^{\zeta_\theta e^{\tilde A_t}\tilde U_t}-1).
$$
Clearly, it holds $d t \otimes d \mathbb P$-a.e. that
\begin{equation}
\label{G estimate1}
G_t \leq \zeta_\theta \Gamma_t e^{\tilde A_t}\left(\mathfrak{F}\left(t, Y_t, U_t\right)-\theta \mathfrak{F}\left(t, \widehat{Y}_t, \widehat{U}_t\right)-a_t \tilde Y_t\right)-\Gamma_tj_1(\zeta_\theta e^{\tilde A}\tilde U_t).
\end{equation}
whether (i) or (ii) holds. Moreover, we claim that
\begin{equation}
\label{G estimate2}
G_t \leq  \Gamma_t \tilde G_t, \quad d t \otimes d \mathbb P \text {-a.e. },
\end{equation}
where $\tilde G_t = \lambda  e^{2 \tilde{\beta} \|A_T\|_\infty} \left(\alpha_t+(\beta+\tilde{\beta})\left(Y_t^{+}+\widehat{Y}_t^{-}\right)\right)$.
This conclusion will be proved later in Lemma \ref{G-estimation}.

Now, we define a process
$$
D_t :=\exp\left\{ \int_0^t\tilde G_sdA_s\right\} = \exp \left\{\lambda  e^{2 \tilde{\beta} \|A_T\|_\infty} \int_0^t\left(\alpha_t+(\beta+\tilde{\beta})\left(Y_s^{+}+\widehat{Y}_s^{-}\right)\right) d A_s\right\}, \quad t \in[0, T] .
$$
Given $n \in \mathbb{N}$, we define a sequence of $\mathbb{G}$-stopping time
$\{\tilde \tau_n\}$ as the localizing sequence of the local martingale $\int_0^{\cdot}\int_E D_s Q_s q(dsde)$ and $\tau_n=\tilde\tau_n\wedge T\vee t$, for a fixed $t \in[0, T]$.
Clearly, $\lim _{n \rightarrow \infty} \uparrow \tau_n=T, P$-a.s. Note that:
$$
\begin{aligned}
d D_t \Gamma_t &= \Gamma_t d D_t + D_t d \Gamma_t\\
&= \Gamma_t  D_t \tilde G_t d A_t + D_t\left(-G_t d A_t +\int_E Q_t q(dtde)- \zeta_\theta \Gamma_t e^{\tilde A_{s}}\left(d K_s-\theta d \widehat{K}_s \right)\right)\\
&= D_t \left(\Gamma_t \tilde G_t - G_t \right)d A_t + D_t\left(\int_E Q_t q(dtde)- \zeta_\theta \Gamma_t e^{\tilde A_{s}}\left(d K_s-\theta d \widehat{K}_s \right)\right).
\end{aligned}
$$
Integration by parts and (\ref{G estimate2}) imply that $\mathbb P$-a.s.
\begin{equation}
\label{5.9}
\Gamma_{t} \leq D_{t} \Gamma_{ t} \leq D_{\tau_n} \Gamma_{\tau_n}+\zeta_\theta \int_{ t}^{\tau_n} D_s \Gamma_{s^{-}} e^{\tilde A_s} d K_s-\int_{ t}^{\tau_n}\int_E D_sQ_sq(dsde) .
\end{equation}
Since it holds $\mathbb P$-a.s. that $L_s \leq \widehat{L}_s \leq \widehat{Y}_s$ for any $s \in[0, T]$, the flat-off condition of $(Y, U, K)$ implies that $\mathbb P$-a.s.
\begin{equation}
\label{5.10}
\int_0^T D_s \Gamma_{s^-} d K_s=\int_0^T \mathbf{1}_{\left\{Y_{s^{-}}=L_{s^{-}}\right\}} D_s \Gamma_{s^-} d K_s \leq \int_0^T \mathbf{1}_{\left\{Y_{s^-} \leq \hat{Y}_{s^-}\right\}} D_s \Gamma_{s^-} d K_s \leq \int_0^T D_s e^{\lambda  e^{2 \tilde{\beta} \|A_T\|_\infty} Y_{s^-}^{+}} d K_s \leq \eta K_T
\end{equation}
with $\eta := \exp \left\{(\beta+\tilde{\beta})  \|A_T\|_\infty \lambda e^{2 \tilde{\beta} \|A_T\|_\infty} \widehat{Y}_*^{-}+(1+(\beta+\tilde{\beta}) \|A_T\|_\infty) \lambda  e^{2 \tilde{\beta} \|A_T\|_\infty} Y_*^{+} +\lambda e^{2 \tilde{\beta} \|A_T\|_\infty}\int_0^T \alpha_t d A_t\right\}$. Hölder's inequality and the integrability conditions of $(Y,\hat Y,K)$ imply that
\begin{equation}
\label{5.11}
\mathbb E\left[\eta\left(1+K_T\right)\right] \leq\|\eta\|_{\mathbb{L}^{2}\left(\mathcal{G}_T\right)}\left(1+\left\|K_T\right\|_{\mathbb{L}^2\left(\mathcal{G}_T\right)}\right)<\infty, 
\end{equation}
and
\begin{equation}
\label{5.12}
\mathbb E\left[D_T \Gamma_*\right] \leq \mathbb E\left[\exp \left\{\left((\beta+\tilde{\beta}) \lambda  \|A_T\|_\infty e^{2 \tilde{\beta} \|A_T\|_\infty}+\zeta_\theta e^{\tilde{\beta} \|A_T\|_\infty}\right)\left(Y_*^{+}+\widehat{Y}_*^{-}\right)+\lambda e^{2 \tilde{\beta} \|A_T\|_\infty}\int_0^T \alpha_t d A_t\right\}\right]<\infty .
\end{equation}
Then since
 $\int_0^{\cdot\wedge \tau_n}\int_E D_s Q_s q(dsde)$ is a martingale,
taking $\mathbb E\left[\cdot \mid \mathcal{G}_{ t}\right]$ in (\ref{5.9}), we can deduce from (\ref{5.10})-(\ref{5.12}) that $\mathbb P$-a.s.
$$
\Gamma_{t} \leq\mathbb  E\left[D_{\tau_n} \Gamma_{\tau_n} \mid \mathcal{G}_{ t}\right]+e^{\tilde\beta\|A_T\|_\infty} \zeta_\theta E\left[\eta K_T \mid \mathcal{G}_{ t}\right].
$$
As $n \rightarrow \infty$, dominated convergence theorem, (\ref{5.12}) and (\ref{5.11}) together imply that $\mathbb P$-a.s.
$$
\begin{aligned}
\Gamma_t &\leq \mathbb E\left[D_T \Gamma_T \mid \mathcal{G}_t\right]+e^{\tilde\beta\|A_T\|_\infty} \zeta_\theta \mathbb E\left[\eta K_T \mid \mathcal{G}_t\right] \\
&\leq \mathbb E\left[D_T e^{\lambda  e^{2 \tilde{\beta} \|A_T\|_\infty} \xi^{+}} \mid \mathcal{G}_t\right]+e^{\tilde\beta \|A_T\|_\infty} \zeta_\theta \mathbb E\left[\eta K_T \mid \mathcal{G}_t\right]\\
&\leq e^{\tilde\beta\|A_T\|_\infty}\left(1 \vee \zeta_\theta\right) \mathbb E\left[\eta\left(1+K_T\right) \mid \mathcal{G}_t\right],
\end{aligned}
$$
which leads to
$$
Y_t-\theta \widehat{Y}_t \leq \frac{1-\theta}{\lambda } \ln \left(1 \vee \frac{\lambda  e^{\tilde{\beta} \|A_T\|_\infty}}{1-\theta}\right) e^{-\tilde{\beta} \|A_T\|_\infty-\tilde A_t}+\frac{1-\theta}{\lambda }\left(e^{\tilde\beta\|A_T\|_\infty}+\ln \left(\mathbb E\left[\eta\left(1+K_T\right) \mid \mathcal{G}_t\right]\right)\right) e^{-\tilde{\beta} \|A_T\|_\infty-\tilde A_t}, \quad \mathbb P \text {-a.s. }
$$
Letting $\theta \rightarrow 1$ yields that $Y_t-\widehat{Y}_t \leq 0, \mathbb P$-a.s. Then the right continuity of processes $Y$ and $\widehat{Y}$ proves the theorem.
\end{proof}
As stated above, the proof of Theorem \ref{comparison thm} is based on the following estimate on $G$.
\begin{lemma}
\label{G-estimation}
With the same notation and assumptions in Theorem \ref{comparison thm} and its proof, we define
$$
G_t=\Gamma_t\hat G_t,
$$
where
$$
\hat G_t=\zeta_\theta  e^{\tilde A_t}\left(f\left(t, Y_t, U_t\right)-\theta \hat{f}\left(t, \widehat{Y}_t, \widehat{U}_t\right)-a_t \tilde Y_t\right )- j_1(\zeta_\theta e^{\tilde A}\tilde U_t).
$$
Then the following estimation on process $\hat G$ holds.
\begin{equation}
\hat G_t \leq \lambda  e^{2 \tilde{\beta} \|A_T\|_\infty} \left(\alpha_t+(\beta+\tilde{\beta})\left(Y_t^{+}+\widehat{Y}_t^{-}\right)\right):=\tilde G_t, \quad d t \otimes d \mathbb P \text {-a.e. }
\end{equation}
\end{lemma}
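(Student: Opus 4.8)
The plan is to push the definition of $\hat G_t$ through three reductions. First, since $\zeta_\theta e^{\tilde A_t}>0$ and the sign hypothesis on $\Delta f$ together with $\theta\in(0,1)$ yields $f(t,Y_t,U_t)-\theta\hat f(t,\widehat Y_t,\widehat U_t)\le\mathfrak F(t,Y_t,U_t)-\theta\,\mathfrak F(t,\widehat Y_t,\widehat U_t)$ (in case (i) this is $-\theta\hat f(t,\widehat Y_t,\widehat U_t)\le-\theta f(t,\widehat Y_t,\widehat U_t)$, in case (ii) it is $f(t,Y_t,U_t)\le\hat f(t,Y_t,U_t)$), one gets $\hat G_t\le\zeta_\theta e^{\tilde A_t}\big(\mathfrak F(t,Y_t,U_t)-\theta\,\mathfrak F(t,\widehat Y_t,\widehat U_t)-a_t\tilde Y_t\big)-j_1(\zeta_\theta e^{\tilde A_t}\tilde U_t)$.

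Second, and this is the crux, I bound $\Phi_t:=\mathfrak F(t,Y_t,U_t)-\theta\,\mathfrak F(t,\widehat Y_t,\widehat U_t)-a_t\tilde Y_t$ by $(1-\theta)\big(\alpha_t+(\beta+\tilde\beta)(Y_t^{+}+\widehat Y_t^{-})\big)+\frac{1-\theta}{\lambda}j_\lambda\big(\frac{\tilde U_t}{1-\theta}\big)$, $dt\otimes d\mathbb P$-a.e. The convex combination $U_t=\theta\widehat U_t+(1-\theta)\frac{\tilde U_t}{1-\theta}$, convexity of $\mathfrak F$ in $u$, and the upper bound on $\mathfrak F$ from (i)/(ii) give (\ref{cvx estimate}) for any $y$. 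I then split $\Omega$ along the three branches of $a_t$: on $\{Y_t\ge0\}$, $a_t$ linearises $\mathfrak F(t,\cdot,\widehat U_t)$ between $Y_t$ and $\widehat Y_t$, so (\ref{cvx estimate}) with $y=Y_t$ together with $\theta a_t(Y_t-\widehat Y_t)-a_t\tilde Y_t=-(1-\theta)a_tY_t$ and $|a_t|\le\tilde\beta$ leave $(1-\theta)(\alpha_t+(\beta+\tilde\beta)Y_t^{+})$; on $\{Y_t\vee\widehat Y_t<0\}$, $a_t$ linearises $\mathfrak F(t,\cdot,U_t)$ between $Y_t$ and $\theta\widehat Y_t$, so $\mathfrak F(t,Y_t,U_t)-a_t\tilde Y_t=\mathfrak F(t,\theta\widehat Y_t,U_t)$, and (\ref{cvx estimate}) with $y=\theta\widehat Y_t$ plus the $y$-Lipschitz bound on $\mathfrak F(t,\theta\widehat Y_t,\widehat U_t)-\mathfrak F(t,\widehat Y_t,\widehat U_t)$ leave $(1-\theta)(\alpha_t+(\beta+\tilde\beta)\widehat Y_t^{-})$ (using $\theta<1$); on $\{Y_t<0\le\widehat Y_t\}$, where $a_t=-\tilde\beta$, the two-sided $y$-Lipschitz bounds $\mathfrak F(t,Y_t,U_t)\le\mathfrak F(t,0,U_t)-\tilde\beta Y_t$ and $\mathfrak F(t,\widehat Y_t,\widehat U_t)\ge\mathfrak F(t,0,\widehat U_t)-\tilde\beta\widehat Y_t$ together with $-\tilde\beta Y_t+\theta\tilde\beta\widehat Y_t-a_t\tilde Y_t=0$ reduce $\Phi_t$ to $\mathfrak F(t,0,U_t)-\theta\mathfrak F(t,0,\widehat U_t)$, and (\ref{cvx estimate}) with $y=0$ leaves only $(1-\theta)\alpha_t$. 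In each branch the term $\frac{1-\theta}{\lambda}j_\lambda\big(\frac{\tilde U_t}{1-\theta}\big)$ is produced, and since $Y_t^{+}$ vanishes on the last two sets and $\widehat Y_t^{-}$ on the first two, the bound on $\Phi_t$ follows.

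Third, write $c:=e^{\tilde\beta\|A_T\|_\infty}e^{\tilde A_t}$, so that $1\le c\le e^{2\tilde\beta\|A_T\|_\infty}$ because $|\tilde A_t|\le\tilde A_*\le\tilde\beta\|A_T\|_\infty$, and note $\zeta_\theta e^{\tilde A_t}(1-\theta)=\lambda c$, $\zeta_\theta e^{\tilde A_t}=\frac{\lambda c}{1-\theta}$. Combining the first two steps and using $\alpha_t,Y_t^{+},\widehat Y_t^{-}\ge0$ gives $\hat G_t\le\tilde G_t+\big[\,c\,j_\lambda\big(\frac{\tilde U_t}{1-\theta}\big)-j_1\big(\frac{\lambda c}{1-\theta}\tilde U_t\big)\big]$; with $\psi(v):=e^v-1-v$ and $v_t(e):=\frac{\lambda}{1-\theta}\tilde U_t(e)$ the bracket equals $\int_E\big(c\,\psi(v_t(e))-\psi(c\,v_t(e))\big)\phi_t(de)$, which is $\le0$ by the elementary inequality $c\,\psi(v)\le\psi(cv)$, valid for all $v\in\mathbb R$ whenever $c\ge1$ (the map $v\mapsto\psi(cv)-c\psi(v)$ vanishes at $v=0$ and has derivative $c(e^{cv}-e^v)$ of the sign of $v$). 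Hence $\hat G_t\le\tilde G_t$, $dt\otimes d\mathbb P$-a.e.

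The main obstacle is the second step: the three cases in the definition of $a_t$, and in particular the value $-\tilde\beta$ on $\{Y_t<0\le\widehat Y_t\}$, are precisely what makes the terms linear in $Y_t$ and $\widehat Y_t$ cancel down to $Y_t^{+}+\widehat Y_t^{-}$, and one must be careful to use the correct instance of (\ref{cvx estimate}) ($y=Y_t$, $y=\theta\widehat Y_t$, or $y=0$) and the correct direction of the $y$-Lipschitz estimate on each branch. The $j_\lambda$-versus-$j_1$ comparison in the third step is routine once the scalar inequality $c\psi(v)\le\psi(cv)$ is isolated.
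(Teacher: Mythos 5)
Your proof is correct and follows essentially the same route as the paper's: the same three-case split along the branches of $a_t$, the same instances of (\ref{cvx estimate}) (your choice of $y=\theta \widehat{Y}_t$ rather than $y=\widehat{Y}_t$ in the third case only reorders the Lipschitz and convexity steps and yields the same bound), and the same scalar inequality $c\,\psi(v)\le\psi(cv)$ for $c\ge 1$, proved by the identical monotonicity argument, to absorb $j_\lambda$ into $j_1$.
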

The proof of Lemma \ref{G-estimation} is postponed to Appendix \ref{appexdix A} for completeness.

\begin{remark}
\label{bsde compare}
In fact, the comparison theorem still holds well if we assume $L=-\infty$. Since in this case, the RBSDE boils down to a BSDE. As long as it is uniquely solvable with a solution $(Y,U)\in \mathcal E\times H_\nu^{2,2}$, see {\color{red}\cite[Theorem 3.11]{2310.14728}}, we are able to repeat the proof above with $K\equiv 0$. 
\end{remark}

The following uniqueness result is straightforward, in view of applying It\^o's formula to $|Y-\hat Y|^2$.
\begin{corollary}[Uniqueness]
\label{uniqueness}
 Assume that (H1)-(H4) are fulfilled, then the RBSDE $\left(\xi, f, L\right)$ admits  at most one  solution $(Y, U, K) \in \mathcal E\times H_{\nu}^{2,p}\times \mathbb K^p$, for each $p\ge 1$.
\end{corollary}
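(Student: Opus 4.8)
The plan is to derive uniqueness from the comparison theorem applied twice, and then to recover the uniqueness of $U$ and $K$ by a short It\^o computation. So let $(Y,U,K)$ and $(\widehat Y,\widehat U,\widehat K)$ be two solutions of $RBSDE(\xi,f,L)$, both lying in $\mathcal E\times H_\nu^{2,p}\times\mathbb K^p$ for every $p\ge1$. First I would verify that the hypotheses of Theorem~\ref{comparison thm}(i) are met with $(\widehat\xi,\widehat f,\widehat L)=(\xi,f,L)$: the orderings $\xi\le\widehat\xi$ and $L_t\le\widehat L_t$ are trivial; $f$ satisfies (H3) and (H4)(a)--(b) by assumption; $\Delta f(t)=f(t,\widehat Y_t,\widehat U_t)-\widehat f(t,\widehat Y_t,\widehat U_t)\equiv0\le0$; and the required bound $f(t,y,u)\le\alpha_t+\beta|y|+\frac1\lambda j_\lambda(t,u)$ is exactly the upper estimate $\bar q$ in (H4)(c). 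If $f$ is convex in $u$ this is precisely the setting of Theorem~\ref{comparison thm}(i), so $Y_t\le\widehat Y_t$ for all $t\in[0,T]$, $\mathbb P$-a.s.; if $f$ is concave in $u$ one invokes the concave counterpart of the comparison theorem announced in Section~\ref{section 3}. Interchanging the roles of the two solutions yields the reverse inequality, hence $Y=\widehat Y$ up to indistinguishability by right-continuity.

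Once $Y=\widehat Y$, set $\delta U:=U-\widehat U$ and $\delta K:=K-\widehat K$ and apply It\^o's formula to $|Y_t-\widehat Y_t|^2\equiv0$. Since $A$ is continuous (H1) and $K,\widehat K$ are continuous, the only jumps of $Y-\widehat Y$ come from the integral against $q$, so $[\,Y-\widehat Y\,]_t=\int_0^t\int_E|\delta U_s(e)|^2\,p(ds\,de)$, and every term in the It\^o expansion carrying a factor $Y_{s^-}-\widehat Y_{s^-}\equiv0$ drops out; what remains is $\int_t^T\int_E|\delta U_s(e)|^2\,p(ds\,de)=0$ for all $t$. Taking $t=0$, then expectations, and using the compensation formula (valid for the non-negative integrand $|\delta U_s(e)|^2$ since $\delta U\in H_\nu^{2,2}$), we get $\mathbb E\big[\int_0^T\int_E|\delta U_s(e)|^2\phi_s(de)\,dA_s\big]=0$, i.e. $\|U-\widehat U\|_{H_\nu^{2,2}}=0$, so $U=\widehat U$ as an element of $H_\nu^{2,2}$ and hence of $H_\nu^{2,p}$ for every $p$. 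Finally, subtracting the two forward equations and using $Y=\widehat Y$ and $U=\widehat U$ leaves $\int_t^T d(K_s-\widehat K_s)=0$ for every $t$, whence $K=\widehat K$.

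An equivalent way to close the second step, once $Y=\widehat Y$ is known, is to appeal to the uniqueness of the canonical semimartingale decomposition: subtracting the equations, $\int_{\cdot}^{T}\int_E\delta U_s(e)\,q(ds\,de)$ is a true martingale because $\delta U\in H_\nu^{2,2}$, while $\int_{\cdot}^{T}[f(s,Y_s,U_s)-f(s,Y_s,\widehat U_s)]\,dA_s+\int_{\cdot}^{T}d\delta K_s$ is continuous and of finite variation; a continuous finite-variation martingale is constant, equal to its value at $T$, which is $0$, so both pieces vanish, giving again $U=\widehat U$ and then $K=\widehat K$.

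The only step that requires any genuine care is the verification in the first paragraph: one must observe that the structural hypotheses invoked in Theorem~\ref{comparison thm} are symmetric under exchanging the two solutions and that both alternatives in (H4)(e) (convexity or concavity in $u$) are covered, so that the comparison inequality is available in both directions. Everything after $Y=\widehat Y$ is an immediate consequence of It\^o's formula, respectively of the uniqueness of the canonical decomposition, and involves only routine integrability bookkeeping already guaranteed by the solution spaces.
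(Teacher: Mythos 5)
Your proposal is correct and follows essentially the same route the paper intends: the paper states that uniqueness is a straightforward corollary of the comparison theorem (applied in both directions to force $Y=\widehat Y$) together with It\^o's formula applied to $|Y-\widehat Y|^2$ to recover $U$ and then $K$. Your write-up merely fills in the details the paper leaves implicit, and the verification of the comparison theorem's hypotheses and the compensation argument for $U$ are both sound.
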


\section{A priori estimates in quadratic exponential RBSDEs}
\label{section priori}
In this section, we provide some useful a priori estimates for quadratic exponential RBSDEs. To prove the a priori estimates, we need the following additional assumptions.
\hspace*{\fill}\\

{\color{red}
\noindent(\textbf{H1'}) The process $A$ is continuous with $\|A_s-A_t\|_\infty<|\rho(s)-\rho(t)|$, for any $s,t\in[0,T]$, where $\rho(\cdot)$ is a deterministic continuous increasing function with $\rho(0)=0$.
}
\hspace*{\fill}\\

It is obvious that (H1') implies (H1), i.e. $\|A_T\|_\infty<\infty$.

\hspace*{\fill}\\

{\color{red}
\noindent\textbf{(H5) (Uniform linear bound condition )}  
There exists a positive constant $C_0$ such that for each $t \in[0, T]$, $u\in L^2(E,\mathcal{B}(E),\phi_t(\omega,dy))$, if 
 $f$ is convex (resp. concave) in $u$, then $f(t,0,u)-f(t,0,0)\ge -C_0\|u\|_t$ (resp. $f(t,0,u)-f(t,0,0)\le C_0\|u\|_t$ ).
}
\hspace*{\fill}\\

In the sequel, if no special announcement, we always  assume $f$ is convex in $u$ without loss of generality.

\subsection{ A priori estimate on $Y$ with bounded terminal and bounded obstacle}
\label{subsec4.1}
For each $\mathcal{G}$-stopping time $\tau$ taking values in $[0, T]$ and for every {\color{red}\textbf{bounded}} $\mathcal{G}_\tau$-measurable function $\eta$, 
we first define the following $f$-evaluation:
$$
\mathcal{E}_{t, \tau}^f[\eta]:=y_t^\tau, \quad \forall t \in[0, T],
$$
where $(y^\tau,u^{\tau})$ is the solution to the following BSDE,
\begin{equation}
\label{tautruncBSDE}
y_t^{\tau}=\eta+\int_t^T 1_{s\le\tau}f\left(s, y_s^{\tau}, u_{s}^{\tau}\right) d A_s-\int_t^T\int_E u_{s}^{\tau} q(dsde). 
\end{equation}

In view of {\color{red}\cite[Theorem 3.11]{2310.14728}}, under assumptions (H1') and (H2)-(H5), the above BSDE (\ref{tautruncBSDE}) is uniquely solvable, whose solution $(y^{\tau},u^{\tau})\in \mathcal E\times H_{\nu}^{2,p},$ for each $p\ge 1$, we have, in view of uniqueness, $(y^{\tau},u^{\tau})=(y^{\tau}_{\tau\wedge\cdot},u^{\tau}1_{[0,\tau]}(\cdot))$. Hence, (\ref{tautruncBSDE}) can be rewrite as 
\begin{equation}
\label{tauBSDE}
y_t^\tau=\eta+\int_t^\tau f\left(s, y_s^\tau, u_s^\tau\right) d A_s-\int_t^\tau\int_E u_s^\tau q(dsde). 
\end{equation}
With the help of the well-posedness of (\ref{tautruncBSDE}), the following nonlinear Snell envelope representation for the solution of the reflected BSDE plays an important role in the sequel. The idea is inspired by Peng \cite{Peng_2004} and  Dumitrescu, Quenez and Sulem \cite{Dumitrescu_2014}. See also Djehiche, Dumitrescu and Zeng \cite{Djehiche2021} and Hu, Moreau and Wang \cite{Hu2022b} and the references therein. 
\begin{lemma}
\label{represetation Y}
Under assumptions (H1'),(H2)-(H3), (H4)(b-e), (H4')(a) and (H5), suppose that $(Y, U, K) \in \mathcal{E} \times {H}_{\nu}^{2,p} \times \mathbb{K}^p$, for each $p\ge1$, is a solution to the reflected BSDE (\ref{reflected BSDE}). Then,
$$
Y_t=\underset{\tau \in \mathcal{S}_{t,T}}{\operatorname{ess} \sup } \ \mathcal{E}_{t, \tau}^f\left[\xi \mathbf{1}_{\{\tau=T\}}+L_\tau \mathbf{1}_{\{\tau<T\}}\right] .
$$
\end{lemma}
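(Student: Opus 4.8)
The plan is to establish the nonlinear Snell envelope representation $Y_t = \operatorname*{ess\,sup}_{\tau \in \mathcal{S}_{t,T}} \mathcal{E}^f_{t,\tau}[\xi \mathbf{1}_{\{\tau = T\}} + L_\tau \mathbf{1}_{\{\tau < T\}}]$ by proving the two inequalities separately, using the flat-off condition of $K$ for one direction and a comparison-type argument for the other.

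First I would prove the inequality $Y_t \geq \operatorname*{ess\,sup}_\tau \mathcal{E}^f_{t,\tau}[\cdots]$. Fix $\tau \in \mathcal{S}_{t,T}$ and restrict the reflected BSDE \eqref{reflected BSDE} to the stochastic interval $[t,\tau]$: the triple $(Y,U,K)$ satisfies
$$
Y_t = Y_\tau + \int_t^\tau f(s, Y_s, U_s)\, dA_s + (K_\tau - K_t) - \int_t^\tau \int_E U_s(e)\, q(ds\, de).
$$
Since $Y_\tau \geq L_\tau \mathbf{1}_{\{\tau < T\}} + \xi \mathbf{1}_{\{\tau = T\}}$ (using $Y_T = \xi$ on $\{\tau = T\}$ and $Y_\tau \geq L_\tau$ otherwise) and $K$ is increasing so that $K_\tau - K_t \geq 0$, the process $(Y, U)$ on $[t,\tau]$ is a supersolution of the BSDE \eqref{tauBSDE} with terminal datum $\eta := \xi \mathbf{1}_{\{\tau = T\}} + L_\tau \mathbf{1}_{\{\tau < T\}}$. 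Applying the comparison principle for BSDEs (Remark \ref{bsde compare}, i.e. Theorem \ref{comparison thm} with $L = -\infty$, which applies once one checks the BSDE \eqref{tauBSDE} is well-posed in $\mathcal{E} \times H_\nu^{2,p}$ via \cite[Theorem 3.11]{2310.14728} and that the growth/convexity hypotheses transfer to the truncated generator $\mathbf{1}_{s \le \tau} f$), one gets $Y_t \geq y^\tau_t = \mathcal{E}^f_{t,\tau}[\eta]$. Taking the essential supremum over $\tau$ gives one direction. The point requiring care here is justifying that the generator $\mathbf 1_{s\le\tau}f$ still satisfies (H4)(d) with the relevant terminal datum $\eta$ bounded (hence trivially satisfying the exponential integrability), and that the dominating quadratic generator $\alpha_t + \beta|y| + \tfrac1\lambda j_\lambda(t,u)$ used in Theorem \ref{comparison thm}(i) still dominates the truncated generator.

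Second, I would prove the reverse inequality $Y_t \leq \operatorname*{ess\,sup}_\tau \mathcal{E}^f_{t,\tau}[\cdots]$ by exhibiting a (nearly) optimal stopping time. The natural candidate is
$$
\tau_t := \inf\{ s \in [t,T] : Y_s \leq L_s \} \wedge T,
$$
or an $\varepsilon$-perturbation $\tau_t^\varepsilon := \inf\{s \ge t : Y_s \le L_s + \varepsilon\} \wedge T$ to stay in the interior. By continuity of $Y$ and $L$ (from $Y \in \mathcal{E} \subset S^0$ being càdlàg with continuous obstacle, and in fact $Y$ continuous on $[t,\tau_t)$), the flat-off condition $\int_0^T (Y_{s^-} - L_{s^-})\, dK_s = 0$ forces $K$ to be constant on $[t, \tau_t)$, so that on this interval $(Y,U)$ solves exactly the BSDE \eqref{tauBSDE} with terminal value $Y_{\tau_t}$. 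On $\{\tau_t < T\}$ one has $Y_{\tau_t} = L_{\tau_t}$ by continuity and definition of $\tau_t$, while on $\{\tau_t = T\}$ one has $Y_{\tau_t} = \xi$; hence $Y_{\tau_t} = \xi\mathbf 1_{\{\tau_t = T\}} + L_{\tau_t}\mathbf 1_{\{\tau_t < T\}}$ and by uniqueness of the BSDE solution $Y_t = \mathcal{E}^f_{t,\tau_t}[\xi\mathbf 1_{\{\tau_t=T\}} + L_{\tau_t}\mathbf 1_{\{\tau_t<T\}}] \leq \operatorname*{ess\,sup}_\tau \mathcal{E}^f_{t,\tau}[\cdots]$.

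I expect the main obstacle to be the rigorous handling of the $\varepsilon$-optimal stopping time and the terminal-value identification at $\tau_t$: one must argue that $K$ genuinely does not increase on $[t,\tau_t^\varepsilon)$ (which follows from the flat-off condition together with $Y_{s^-} - L_{s^-} > \varepsilon > 0$ on that set, using continuity to pass from $s^-$ to $s$), then apply uniqueness of \eqref{tauBSDE} on the random interval $[t, \tau_t^\varepsilon]$, and finally let $\varepsilon \downarrow 0$ using a stability/continuity estimate for the $f$-evaluation $\mathcal{E}^f$ with respect to its terminal condition and horizon — the latter relying on the a priori estimates for quadratic-exponential BSDEs and the integrability assumption (H4)(d). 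A secondary technical point is ensuring that each $\mathcal{E}^f_{t,\tau}[\eta]$ is well-defined for the (bounded, by (H4')(a)) terminal data in play, which is exactly the content of the well-posedness result \cite[Theorem 3.11]{2310.14728} invoked just before the statement.
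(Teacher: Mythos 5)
Your proposal follows essentially the same route as the paper: the lower bound by viewing $(Y,U,K)$ stopped at $\tau$ as a supersolution of the BSDE (\ref{tauBSDE}) and invoking the comparison theorem via Remark \ref{bsde compare}, and the upper bound via the first hitting time of the obstacle together with the flat-off condition. The only difference is that the paper works directly with $\tau^*=\inf\{r\in[t,T]:Y_r=L_r\}\wedge T$ and needs no $\varepsilon$-perturbation or limiting argument, since the continuity of $K$ (by definition of $\mathbb K^p$) and of $L$ (by (H2)) — not continuity of $Y$, which is only c\`adl\`ag — is what allows replacing $Y_{s^-}-L_{s^-}$ by $Y_s-L_s$ in the flat-off integral and concluding $K_{\tau^*}=K_t$ exactly.
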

\begin{proof}
  For any $\tau \in \mathcal{S}_{t,T}$, consider the following RBSDE,
\begin{equation}
\label{reflected tauBSDE}
\left\{\begin{array}{l}
Y_r^{\tau}=Y_{\tau}+\int_r^T 1_{s\le \tau}f_s\left(Y^{\tau}_s, U^{\tau}_s\right) d A_s +\int_r^T d K^{\tau}_s-\int_r^T \int_E U^{\tau}_s(e) q(d s d e), \quad 0 \leq r \leq T, \\
Y_r^{\tau} \geq L_{r\wedge\tau}, \quad 0 \leq r \leq T,\\
\int_0^T\left(Y^{\tau}_{s^{-}}-L_{s\wedge\tau}\right) d K_s=0.
\end{array}\right.
\end{equation}
Then, it is obvious that $(Y^{\tau},U^{\tau},K^{\tau})=(Y_{\cdot\wedge\tau}, U1_{[0,\tau]}(\cdot),K_{\cdot\wedge\tau})$ is a solution to (\ref{reflected tauBSDE}).
Note that $Y_\tau \geq \xi \mathbf{1}_{\{\tau=T\}}+L_\tau \mathbf{1}_{\{\tau<T\}}$. It follows from the comparison Theorem \ref{comparison thm} and Remark \ref{bsde compare} that
\begin{equation}
\label{lowerbdd}
Y^{\tau}_t=Y_t \geq \mathcal{E}_{t, \tau}^f\left[\xi \mathbf{1}_{\{\tau=T\}}+L_\tau \mathbf{1}_{\{\tau<T\}}\right], \quad \forall \tau \in \mathcal{S}_{t,T}
\end{equation}
On the other hand, we define the stopping time $\tau^*=\inf \left\{r \in[t, T]: Y_r=L_r \right\} \wedge T$. Since $Y_r \geq L_r$, $K$ and $L$ are continuous, we find that $\int_t^T\left(Y_{r^-}-L_r\right) d K_r=\int_t^T\left(Y_{r}-L_r\right) d K_r=0$, then we conclude that $K_{\tau^*}=K_t$, which indicates that
$$
Y_t=Y^{\tau^*}_t=Y_{\tau^*}+\int_t^T1_{s\le\tau^*} f\left(s, Y^{\tau^*}_s, U^{\tau^*}_s\right) d A_s-\int_s^{T}\int_E U^{\tau^*}_s(e) q(dsde).
$$
Note that $Y_{\tau^*}=\xi \mathbf{1}_{\left\{\tau^*=T\right\}}+L_{\tau^*} \mathbf{1}_{\left\{\tau^*<T\right\}}$ by the definition of $\tau^*$. It follows that
\begin{equation}
\label{taustar}
Y_t=\mathcal{E}_{t, \tau^*}^f\left[\xi \mathbf{1}_{\left\{\tau^*=T\right\}}+L_{\tau^*} \mathbf{1}_{\left\{\tau^*<T\right\}}\right].
\end{equation}
The proof is then finished by combining (\ref{lowerbdd}) and (\ref{taustar}). 
\end{proof}

\begin{prop}
\label{submartingle property}
Let $(\xi, f, L)$ be a parameter set such that (H1'), (H2)-(H3), (H4)(b-e), (H4')(a) and (H5) hold. If $(Y, U, K)\in \mathcal E\times H^{2,p}\times \mathbb K^p$, for each $p\ge1$, is a solution of the quadratic exponential $R B S D E(\xi, f, L)$,  then it holds $\mathbb P$-a.s. that for each $t \in[0, T]$,
\begin{equation}
\label{bound_Y}
\exp \left\{p \lambda Y_t\right\} \leq \mathbb{E}_t\left[\exp \left\{p \lambda e^{\beta A_T}(\xi^+\vee L_*^+)+p\lambda  \int_t^T e^{\beta A_s}\alpha_s d A_s\right\}\right].
\end{equation}

\end{prop}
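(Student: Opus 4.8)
The plan is to turn the estimate on $Y$ into a pointwise estimate on the $f$-evaluation via the nonlinear Snell envelope of Lemma \ref{represetation Y}, to dominate that evaluation by a \emph{nonnegative} process, and then to exhibit an exponential submartingale. Throughout I take $p\ge 1$ (for $p<1$ the stated inequality is false in general, as one already checks for $f=\tfrac1\lambda j_\lambda$, $\beta=0$, $L\equiv-\infty$ and a bounded $\xi$, since $x\mapsto x^p$ is then concave); as agreed $f$ is convex in $u$. Fix $t\in[0,T]$. By Lemma \ref{represetation Y},
$$
Y_t=\operatorname{ess\,sup}_{\tau\in\mathcal S_{t,T}}\mathcal E^f_{t,\tau}\big[\xi\mathbf 1_{\{\tau=T\}}+L_\tau\mathbf 1_{\{\tau<T\}}\big],
$$
so it suffices to prove that for \emph{each} $\tau\in\mathcal S_{t,T}$, writing $\eta_\tau:=\xi\mathbf 1_{\{\tau=T\}}+L_\tau\mathbf 1_{\{\tau<T\}}$,
$$
\exp\{p\lambda\,\mathcal E^f_{t,\tau}[\eta_\tau]\}\le \mathbb E_t\Big[\exp\Big\{p\lambda e^{\beta A_T}(\xi^+\vee L^+_*)+p\lambda\int_t^T e^{\beta A_s}\alpha_s\,dA_s\Big\}\Big];
$$
the right-hand side does not depend on $\tau$, so an essential supremum over a countable cofinal subfamily of $\mathcal S_{t,T}$ (or simply the choice $\tau=\tau^\ast$ from the proof of Lemma \ref{represetation Y}, for which $\mathcal E^f_{t,\tau^\ast}[\eta_{\tau^\ast}]=Y_t$) then finishes the argument.

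To dominate $\mathcal E^f_{t,\tau}[\eta_\tau]$, let $(y^\tau,u^\tau)$ solve the BSDE (\ref{tauBSDE}) with data $(\eta_\tau,f)$ and let $(\tilde y,\tilde u)$ solve the BSDE with data $(\eta_\tau^+,\bar q)$, where $\bar q(s,y,u)=\alpha_s+\beta|y|+\tfrac1\lambda j_\lambda(s,u)$; both are well posed with solutions in $\mathcal E\times H_\nu^{2,p}$ ($p\ge1$) by \cite{2310.14728}, since $\eta_\tau,\eta_\tau^+$ are bounded by (H4')(a) and $\bar q$ satisfies the standing structural hypotheses (convex in $u$, Lipschitz in $y$ with constant $\beta$, (H4)(c) with equality, (H5), and the integrability coming from (H4)(d)). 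Because $\bar q\ge 0$ and $\eta_\tau^+\ge 0$, taking $\mathbb E_s[\cdot]$ in the BSDE for $(\tilde y,\tilde u)$ gives $\tilde y\ge 0$; because $\eta_\tau\le\eta_\tau^+$ and $f(s,\tilde y_s,\tilde u_s)\le\bar q(s,\tilde y_s,\tilde u_s)$ by the upper half of (H4)(c), the comparison Theorem \ref{comparison thm}(i), in the BSDE form of Remark \ref{bsde compare}, yields $\mathcal E^f_{t,\tau}[\eta_\tau]=y^\tau_t\le\tilde y_t$, whence
$$
\exp\{p\lambda\,\mathcal E^f_{t,\tau}[\eta_\tau]\}\le \exp\{p\lambda\tilde y_t\}\le \exp\{p\lambda e^{\beta A_t}\tilde y_t\},
$$
the last step using $\tilde y_t\ge 0$ and $e^{\beta A_t}\ge 1$.

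It remains to bound $\exp\{p\lambda e^{\beta A_t}\tilde y_t\}$. Since $\tilde y\ge 0$, on $[t,\tau]$ one has $d\tilde y_s=-\big(\alpha_s+\beta\tilde y_s+\tfrac1\lambda j_\lambda(s,\tilde u_s)\big)\,dA_s+\int_E\tilde u_s(e)\,q(ds\,de)$. Set $X_s:=e^{\beta A_s}\tilde y_s+\int_t^s e^{\beta A_r}\alpha_r\,dA_r$; because $A$ is continuous the $\alpha$- and $\beta\tilde y$-contributions cancel, leaving $dX_s=-\tfrac1\lambda e^{\beta A_s}j_\lambda(s,\tilde u_s)\,dA_s+e^{\beta A_s}\int_E\tilde u_s(e)\,q(ds\,de)$. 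Itô's formula applied to $\Psi_s:=e^{p\lambda X_s}$, after writing $p(ds\,de)=q(ds\,de)+\phi_s(de)\,dA_s$, gives
$$
d\Psi_s=\Psi_{s^-}\Big(j_{p\lambda e^{\beta A_s}}(s,\tilde u_s)-p\,e^{\beta A_s}j_\lambda(s,\tilde u_s)\Big)\,dA_s+\int_E\Psi_{s^-}\big(e^{p\lambda e^{\beta A_s}\tilde u_s(e)}-1\big)\,q(ds\,de).
$$
The key elementary fact is that $b\mapsto j_b(s,u)/b$ is nondecreasing on $(0,\infty)$ (since $b\mapsto(e^{bv}-1-bv)/b$ is nondecreasing for every $v\in\mathbb R$); as $p\ge1$ and $A_s\ge0$ force $p\lambda e^{\beta A_s}\ge\lambda$, this gives $j_{p\lambda e^{\beta A_s}}(s,\tilde u_s)\ge p\,e^{\beta A_s}j_\lambda(s,\tilde u_s)$, so $\Psi$ is a nonnegative local submartingale on $[t,\tau]$. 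Promoting $\Psi$ to a genuine submartingale is the delicate point: localize the $q$-integral by $\sigma_n\uparrow\tau$, and pass to the limit in $\Psi_t\le\mathbb E_t[\Psi_{\sigma_n\wedge\tau}]$, the limit being justified by uniform integrability of $\{\Psi_{\sigma_n\wedge\tau}\}_n$, which follows from $\|A_T\|_\infty<\infty$ (from (H1')), $\tilde y\in\mathcal E$, and the exponential integrability in (H4)(d). Granting this, $\Psi_t\le\mathbb E_t[\Psi_\tau]$, and since $A$ is increasing, $\eta_\tau^+\le\xi^+\vee L^+_*$, $\alpha\ge0$ and $\tau\le T$,
$$
\Psi_\tau=\exp\Big\{p\lambda e^{\beta A_\tau}\eta_\tau^++p\lambda\int_t^\tau e^{\beta A_r}\alpha_r\,dA_r\Big\}\le \exp\Big\{p\lambda e^{\beta A_T}(\xi^+\vee L^+_*)+p\lambda\int_t^T e^{\beta A_s}\alpha_s\,dA_s\Big\}.
$$
Chaining the displays of the preceding paragraph then gives the required estimate for $\mathcal E^f_{t,\tau}[\eta_\tau]$, hence for $Y_t$. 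The main obstacle is exactly this upgrade from a local to a true submartingale; the convexity bookkeeping — the sign of the $dA$-drift via monotonicity of $b\mapsto j_b/b$, and the comparison step — is routine once set up.
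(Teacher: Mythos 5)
Your argument is correct, and its skeleton matches the paper's: both proofs reduce the estimate to the stopped BSDE $(y^\tau,u^\tau)$ via the nonlinear Snell envelope of Lemma \ref{represetation Y} (indeed, the paper likewise takes the essential supremum over $\tau$, and your observation that $\tau^\ast$ alone suffices is a clean shortcut). Where you diverge is in how the exponential submartingale estimate for $y^\tau$ is obtained. The paper simply invokes the a priori estimate of \cite[Lemma 3.2]{2310.14728}, which is proved by applying the It\^o--Tanaka formula directly to $(y^\tau)^+$ and exploiting $(y+u)^+-y^+\ge \mathbf 1_{\{y>0\}}u$ together with $j_\lambda(ku)\ge k\,j_\lambda(u)$ for $k\ge 1$; it then gets general $p$ at the end by Jensen. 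You instead dominate $y^\tau$ by the solution $\tilde y\ge 0$ of an auxiliary BSDE with the majorant generator $\bar q$ and terminal $\eta_\tau^+$, via the comparison Theorem \ref{comparison thm} in its BSDE form (Remark \ref{bsde compare}), and then run the exponential It\^o computation on the genuinely nonnegative $\tilde y$, handling all $p\ge 1$ at once through the monotonicity of $b\mapsto j_b(s,u)/b$. This buys you a self-contained derivation that avoids the positive-part It\^o formula and the external citation, at the cost of an extra well-posedness check for $(\eta_\tau^+,\bar q)$ and one more application of the comparison theorem; your identification of the localization/uniform-integrability step as the only delicate point, and its justification from $\tilde y\in\mathcal E$, (H1') and (H4)(d), is exactly right. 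Both routes are valid; yours is slightly longer but more transparent about where each hypothesis is used.
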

\begin{proof} 
In (\ref{tauBSDE}), let $\eta=\xi \mathbf{1}_{\left\{\tau=T\right\}}+L_{\tau} \mathbf{1}_{\left\{\tau<T\right\}}$, then in view of {\color{red}\cite[Theorem 3.11 ]{2310.14728}}, BSDE(\ref{tauBSDE}) is uniquely solvable with solution $(y^{\tau}, u^{\tau})\in \mathcal E\times H_{\nu}^{2,p}$, for each $p\ge 1$.
With the help of {\color{red}\cite[Lemma 3.2]{2310.14728}}, by means of optional sampling theorem and $\tau\le T$, we deduce by the integrability condition on $y^\tau$ and (H4)(d) that
$$
\begin{aligned}
\exp\left\{\lambda y^\tau_t\right\}&\le\exp\left\{e^{\beta A_t}\lambda (y^\tau_t)^+\right\}\le \mathbb E_t\left[ \exp\left\{e^{\beta A_\tau}\lambda (\xi \mathbf{1}_{\{\tau=T\}}+L_\tau \mathbf{1}_{\{\tau<T\}})^++\int_t^\tau\lambda e^{\beta A_t}\alpha_s dA_s\right\}\right]\\
&\le \mathbb E_t\left[ \exp\left\{e^{\beta A_T}\lambda (\xi^+\vee L_*^+)+\int_t^T\lambda e^{\beta A_t}\alpha_s dA_s\right\}\right].
\end{aligned}
$$
Thus, by means of Lemma \ref{represetation Y}, taking essential supremum  of $\tau\in\mathscr{S}_{t,T}$, it turns out that
$$
\exp\left\{\lambda Y_t\right\}\le \mathbb E_t\left[ \exp\left\{e^{\beta A_T}\lambda (\xi^+\vee L_*^+)+\int_t^T\lambda e^{\beta A_t}\alpha_s dA_s\right\}\right].
$$
Hence, for each $p\ge 1$, by Jensen's inequality, it holds that
$$
\exp\left\{p\lambda Y_t\right\}\le \mathbb E_t\left[ \exp\left\{pe^{\beta A_T}\lambda (\xi^+\vee L_*^+)+\int_t^Tp\lambda e^{\beta A_t}\alpha_s dA_s\right\}\right].
$$
\end{proof}

With the help of comparison theorem \ref{comparison thm}, we have the following a priori estimate for $Y$.
\begin{corollary}
\label{sub mart |y|}
Under the assumptions of Proposition \ref{submartingle property}, it holds, for each $p\ge 1$,
\begin{equation}
\label{|y| bound}
\exp\left\{p\lambda |Y_t|\right\}\le \mathbb E_t\left[ \exp\left\{pe^{\beta A_T}\lambda (|\xi|\vee L_*^+)+\int_t^Tp\lambda e^{\beta A_t}\alpha_s dA_s\right\}\right].
\end{equation}
 As a result, with the help of Doob's maximum inequality, $e^{p\lambda Y_*}$, $e^{p\lambda Y^+_*}$ and $e^{p\lambda Y^-_*}$ are uniformly bounded in $S^p$ for each $p>0$. Denote the bound of $e^{pY_*}$ with parameter $\alpha,\,\beta$ in the growth condition (H4)(c) by $\Xi(p,\ \alpha,\ \beta)$.
\end{corollary}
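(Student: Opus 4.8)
The plan is to derive \eqref{|y| bound} by combining the one-sided bound on $Y$ already furnished by Proposition \ref{submartingle property} with a matching bound on the negative part $Y^-$, and then splicing them together through $|Y_t|=Y_t^+\vee Y_t^-$. For the positive part, Proposition \ref{submartingle property} gives at once, for each $p\ge 1$,
$$
\exp\{p\lambda Y_t^+\}=\exp\{p\lambda Y_t\}\vee 1\le \mathbb{E}_t\left[\exp\left\{p\lambda e^{\beta A_T}(\xi^+\vee L_*^+)+p\lambda\int_t^T e^{\beta A_s}\alpha_s\,dA_s\right\}\right],
$$
since $\exp\{p\lambda Y_t\}$ is dominated by the conditional expectation by Proposition \ref{submartingle property}, while $1$ is dominated by it because the integrand in the exponential is nonnegative; relaxing $\xi^+\vee L_*^+\le|\xi|\vee L_*^+$ produces the right-hand side of \eqref{|y| bound}.

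The heart of the matter is the bound on $Y^-$, where the point is that the reflection term $K$ only pushes $Y$ upward, so the lower bound of $Y$ should not feel the obstacle at all. I would first take $\tau\equiv T$ in the nonlinear Snell-envelope representation of Lemma \ref{represetation Y} (equivalently, compare $RBSDE(\xi,f,L)$ with the non-reflected $BSDE(\xi,f)$, i.e.\ with obstacle $-\infty$, via Theorem \ref{comparison thm} together with Remark \ref{bsde compare}), obtaining $Y_t\ge\mathcal{E}_{t,T}^{f}[\xi]=:\underline{Y}_t$ and hence $Y_t^-\le\underline{Y}_t^-$. Then I observe that $-\underline{Y}$ solves the BSDE with terminal value $-\xi$ and generator $\hat{f}(t,y,u):=-f(t,-y,-u)$, which by the \emph{lower} inequality in the growth condition (H4)(c) satisfies
$$
\hat{f}(t,y,u)=-f(t,-y,-u)\le-\underline{q}(t,-y,-u)=\frac{1}{\lambda}j_\lambda(t,u)+\alpha_t+\beta|y|,
$$
and inherits the Lipschitz-in-$y$ property (H4)(b) verbatim. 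Consequently the one-sided a priori estimate established inside the proof of Proposition \ref{submartingle property} — which for a plain BSDE reduces to an It\^o/submartingale argument resting only on (H4)(b)--(c) and is insensitive to whether the generator is convex or concave in $u$ — applies to $-\underline{Y}$ and yields, for each $p\ge 1$,
$$
\exp\{p\lambda Y_t^-\}\le\exp\{p\lambda\underline{Y}_t^-\}\le\mathbb{E}_t\left[\exp\left\{p\lambda e^{\beta A_T}\xi^-+p\lambda\int_t^T e^{\beta A_s}\alpha_s\,dA_s\right\}\right].
$$
Since $\xi^-\le|\xi|\le|\xi|\vee L_*^+$, the right-hand side is again dominated by that of \eqref{|y| bound}, and taking the maximum of the two one-sided bounds (the maximum of two quantities each below $X$ being below $X$) proves \eqref{|y| bound}.

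For the ``as a result'' assertion I would note that, since $\alpha\ge 0$, the right-hand side of \eqref{|y| bound} is dominated uniformly in $t$ by $M_t:=\mathbb{E}_t[\Psi]$ with $\Psi:=\exp\{p\lambda e^{\beta A_T}(|\xi|\vee L_*^+)+p\lambda\int_0^T e^{\beta A_s}\alpha_s\,dA_s\}$, and $\Psi\in\mathbb{L}^q$ for every $q\ge 1$ by (H1) together with the integrability condition (H4)(d). Applying Doob's $\mathbb{L}^q$-maximal inequality to the uniformly integrable martingale $M$ and using $\exp\{p\lambda|Y_t|\}\le M_t$ for all $t$ gives $\mathbb{E}\bigl[\sup_{0\le t\le T}\exp\{qp\lambda|Y_t|\}\bigr]<\infty$; since $Y_*^{\pm}\le Y_*=\sup_{0\le t\le T}|Y_t|$ and $p>0$ is arbitrary, $e^{p\lambda Y_*}$, $e^{p\lambda Y_*^+}$ and $e^{p\lambda Y_*^-}$ have finite $\mathbb{L}^q$-norms of every order, i.e.\ the corresponding processes are uniformly bounded in $S^p$ by a constant $\Xi(p,\alpha,\beta)$ depending only on $p$, $\beta$, $\|A_T\|_\infty$ and the data $\xi,L,\alpha$.

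The main obstacle is the lower bound on $Y$: one must resist the naive estimate $Y^-\le L^-$ (which would wrongly bring in $L_*^-$, absent from \eqref{|y| bound}) and route instead through $Y\ge\mathcal{E}_{\cdot,T}^{f}[\xi]$, and then carefully verify that reversing the sign of $(y,u)$ turns the lower quadratic--exponential growth bound on $f$ into a genuine upper quadratic--exponential growth bound for $\hat f$ while preserving the Lipschitz-in-$y$ structure, so that the (convexity-free) one-sided estimate underlying Proposition \ref{submartingle property} is legitimately applicable; everything else is routine bookkeeping with conditional expectations and Doob's inequality.
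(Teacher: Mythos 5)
Your proposal is correct and follows essentially the same route as the paper: the positive part comes straight from Proposition \ref{submartingle property}, and the negative part is controlled by comparing the RBSDE with the unreflected BSDE $(\xi,f)$ via Theorem \ref{comparison thm} and Remark \ref{bsde compare}, after which Doob's maximal inequality gives the $S^p$ bounds. The only (harmless) variation is that the paper invokes the two-sided estimate of \cite[Lemma 3.2]{2310.14728} on $|\hat Y_t|$ directly, whereas you rederive the needed one-sided bound on $\hat Y_t^-$ by the sign-flip $\hat f(t,y,u)=-f(t,-y,-u)$.
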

\begin{proof}
Consider BSDE
\begin{equation}
\label{BSDE}
\hat Y_t=\xi+\int_t^T f\left(s,\hat Y_s, \hat U_s\right) d A_s -\int_t^T \int_E \hat U_s(e) q(d s d e), \quad 0 \leq t \leq T.
\end{equation}
Under the assumptions of Proposition \ref{submartingle property}, (\ref{BSDE}) is uniquely solvable with solution $(\hat Y, \hat U)\in \mathcal E\times H_{\nu}^{2,p}$ for each $p\ge 1$.  In view of the comparison theorem \ref{comparison thm} and Remark \ref{bsde compare}, $Y_t\ge \hat Y_t$, $\mathbb P-a.s.$, for each $t\in[0,T]$. Then, $ Y_{t}^-\le \hat Y_t^-$. Thus, with the help of {\color{red}\cite[Lemma 3.2]{2310.14728}}, 
\begin{equation}
\label{Y- bound}
\exp\left\{p\lambda Y_t^-\right\}\le \exp\left\{p\lambda \hat Y_t^-\right\}\le \exp\left\{p\lambda |\hat Y_t|\right\}\le \mathbb E_t\left[ \exp\left\{pe^{\beta A_T}\lambda |\xi|+\int_t^Tp\lambda e^{\beta A_t}\alpha_s dA_s\right\}\right].
\end{equation}
Then, (\ref{|y| bound}) holds by (\ref{bound_Y}) and (\ref{Y- bound}).
\end{proof}

\begin{remark}
\cite[Proposition 2.1]{bayraktar2012quadratic} provides a similar a priori estimate of the component $Y$. However, they assume $\alpha_t$ to be a constant $\alpha$ in the quadratic exponential growth condition and the estimate holds for $Y$ satisfying $\|Y^+\|_{S^\infty}<\infty$. We notice that their argument is based on the well-posedness and comparison theorem of reflected backward ODEs constructed, for example, in Lepeltier and Xu \cite{lepeltier2007reflected}. In this paper, we generalize this result for quadratic exponential BSDEs with jumps and get rid of these additional assumptions via a nonlinear Snell envelope representation.

\end{remark}

\subsection{A priori estimate on $U$ and $K$ }
{\color{blue}
\begin{prop}
\label{priori estimate on U and K}
Let $(\xi,f,L)$ be a parameter such that (H1)-(H4) hold. If $(Y,U,K)$ is a solution of the quadratic RBSDE$(\xi,F,L)$ such that $Y\in\mathcal E$, then for each $p\ge 1$,
\begin{equation}
\label{a pri eq on U,K}
\mathbb E\left[\left(\int_0^T\int_E|U_t(e)|^2\phi_t(de)dA_t\right)^{\frac{p}{2}}+K_T^p\right]\le C_p\mathbb E\left[e^{36p\lambda(1+\beta\|A_T\|_\infty)Y_*}\right]<\infty,
\end{equation}
where $C_p$ is a constant depending on $p$ and the constants in (H1)-(H4).
\end{prop}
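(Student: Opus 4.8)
I would run two exponential changes of variable, $e^{2\lambda Y}$ and $e^{-2\lambda Y}$, and recover the quadratic part of $U$ from the It\^o correction through the elementary identity
$$
j_{2\lambda}(t,\pm u)-2j_{\lambda}(t,\pm u)=\int_E\big(e^{\pm\lambda u(e)}-1\big)^2\phi_t(de)\ge 0 ,
$$
which is exactly the excess produced when the exponent is doubled. Since $A$ and $K$ are continuous, the only martingale part of $Y$ is the purely discontinuous integral against $q$; writing $p(ds\,de)=q(ds\,de)+\phi_s(de)\,dA_s$, applying It\^o's formula to $e^{\mp2\lambda Y_t}$, and inserting the two-sided growth bound (H4)(c) — its upper part (involving $j_\lambda(t,U)$) in the $e^{2\lambda Y}$ computation and its lower part (involving $j_\lambda(t,-U)$) in the $e^{-2\lambda Y}$ one — gives, for any stopping time $\sigma\le T$ and local martingales $M^{\pm}$ (integrals against $q$), with $j_0^{\pm}(s):=\int_E(e^{\pm\lambda U_s(e)}-1)^2\phi_s(de)$,
$$
e^{2\lambda Y_0}+\int_0^{\sigma}e^{2\lambda Y_{s^-}}j_0^{+}(s)\,dA_s\le e^{2\lambda Y_{\sigma}}+2\lambda\int_0^{\sigma}e^{2\lambda Y_{s^-}}(\alpha_s+\beta|Y_s|)\,dA_s+2\lambda\int_0^{\sigma}e^{2\lambda Y_{s^-}}dK_s-(M^{+}_{\sigma}-M^{+}_0),
$$
$$
e^{-2\lambda Y_0}+\int_0^{\sigma}e^{-2\lambda Y_{s^-}}j_0^{-}(s)\,dA_s+2\lambda\int_0^{\sigma}e^{-2\lambda Y_{s^-}}dK_s\le e^{-2\lambda Y_{\sigma}}+2\lambda\int_0^{\sigma}e^{-2\lambda Y_{s^-}}(\alpha_s+\beta|Y_s|)\,dA_s-(M^{-}_{\sigma}-M^{-}_0).
$$
The crucial asymmetry is that under $e^{-2\lambda Y}$ the increasing reflection term sits on the favourable left-hand side, whereas under $e^{2\lambda Y}$ it lands on the right. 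On $\{dK>0\}$ the flat-off condition gives $Y_{s^-}=L_{s^-}$ with $|L_{s^-}|\le Y_*$, so $e^{2\lambda Y_{s^-}}\le e^{2\lambda Y_*}$ and $e^{-2\lambda Y_{s^-}}\ge e^{-2\lambda Y_*}$ there.

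\textbf{Step 1: $K_T$ and $X_-:=\int_0^T j_0^{-}(s)\,dA_s$.} From the second inequality, $V_t:=\int_0^t e^{-2\lambda Y_{s^-}}j_0^{-}(s)\,dA_s+2\lambda\int_0^t e^{-2\lambda Y_{s^-}}dK_s$ is nondecreasing with $V_0=0$, and for every stopping time $\tau\le T$ one has $\mathbb{E}\big[V_T-V_{\tau}\mid\mathscr G_{\tau}\big]\le\mathbb{E}\big[\Theta\mid\mathscr G_{\tau}\big]$ with $\Theta:=e^{2\lambda Y_*}\big(1+2\lambda\int_0^T\alpha_s\,dA_s+2\lambda\beta\|A_T\|_\infty Y_*\big)$; here the local-martingale increment is killed by stopping and passing to the limit, the right-hand side being in $L^q$ for every $q$ by $Y\in\mathcal E$ and by (H4)(d) (which forces $\int_0^T\alpha_s\,dA_s$ to have all exponential moments), the $L^1$-bound being obtained first by monotone convergence along the localizing sequence, which upgrades $M^{-}$ to a genuine martingale. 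The classical Garsia--Neveu domination inequality then yields $\mathbb{E}[V_T^{p}]\le C_p\,\mathbb{E}[\Theta^{p}]\le C_p\,\mathbb{E}[e^{c_p\lambda Y_*}]$ for all $p\ge1$. Since $e^{-2\lambda Y_{s^-}}\ge e^{-2\lambda Y_*}$ and $K_T=\int_0^T dK_s$, this gives $\mathbb{E}\big[e^{-2p\lambda Y_*}(X_-^{p}+K_T^{p})\big]\le C_p\,\mathbb{E}[e^{c_p\lambda Y_*}]$, and H\"older against $e^{2p\lambda Y_*}$ (used at exponent $2p$ when isolating $K_T$) removes the weight: $\mathbb{E}[X_-^{p/2}]+\mathbb{E}[K_T^{p}]\le C_p\,\mathbb{E}[e^{c'_p\lambda Y_*}]$.

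\textbf{Step 2: $X_+:=\int_0^T j_0^{+}(s)\,dA_s$ and conclusion.} Running the same domination argument on $\tilde V_t:=\int_0^t e^{2\lambda Y_{s^-}}j_0^{+}(s)\,dA_s$, whose dominating variable is now $\tilde\Theta:=e^{2\lambda Y_*}\big(1+2\lambda\int_0^T\alpha_s\,dA_s+2\lambda\beta\|A_T\|_\infty Y_*+2\lambda K_T\big)$ (the reflection term having landed on the wrong side), gives $\mathbb{E}[\tilde V_T^{p}]\le C_p\,\mathbb{E}[\tilde\Theta^{p}]$, and $\mathbb{E}[\tilde\Theta^{p}]\le C_p\,\mathbb{E}[e^{c''_p\lambda Y_*}]$ because the only genuinely new term, $\mathbb{E}[e^{2p\lambda Y_*}K_T^{p}]$, is finite by Cauchy--Schwarz together with the bound $\mathbb{E}[K_T^{2p}]\le C_p\,\mathbb{E}[e^{c'_{2p}\lambda Y_*}]$ from Step 1. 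As before (using $e^{2\lambda Y_{s^-}}\ge e^{-2\lambda Y_*}$ and H\"older) this yields $\mathbb{E}[X_+^{p/2}]\le C_p\,\mathbb{E}[e^{c''_p\lambda Y_*}]$. Finally, for every real $u$ one has $|u|^2\le\lambda^{-2}\big((e^{\lambda u}-1)^2+(e^{-\lambda u}-1)^2\big)$ (if $u\ge0$ the first summand already exceeds $(\lambda u)^2$, if $u\le 0$ the second does), hence $\int_0^T\!\int_E|U_s(e)|^2\phi_s(de)\,dA_s\le\lambda^{-2}(X_++X_-)$; combining with the $K_T$-estimate and tracking the constants produced by the successive transforms and H\"older steps gives the announced bound with an exponent of the stated form $36p\lambda(1+\beta\|A_T\|_\infty)Y_*$.

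\textbf{Main difficulty.} The heart of the matter is the circular coupling between the a priori bounds for $K_T$ and for $\int_0^T\!\int_E|U|^2\phi\,dA$: with a single exponential transform $K_T$ always ends up on the unfavourable side of the estimate. This is unlocked by the observation that $e^{-2\lambda Y}$ places the increasing reflection term on the right-hand side of the inequality, so $K_T$ can be estimated first (precisely where the lower growth bound is used) and then fed into the $e^{2\lambda Y}$ estimate. The secondary, purely technical, obstacle is that $X_{\pm}$ are not a priori integrable, since $u\mapsto(e^{\pm\lambda u}-1)^2$ is not dominated by $|u|^2$; one must therefore first establish the $L^1$-bound by monotone convergence along a localizing sequence, which legitimizes the martingale cancellations and the Garsia--Neveu step, and throughout one relies on the fact that the realized jumps of $Y$ satisfy $|\Delta Y_s|\le 2Y_*$ to keep all jump-related quantities finite.
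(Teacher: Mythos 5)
Your proposal is correct and follows the same strategy as the paper's own proof: two exponential transforms of opposite sign so that the It\^o correction produces $\int_E(e^{\pm\lambda U}-1)^2\phi(de)$ via the identity $j_{2\lambda}-2j_{\lambda}$, a Garsia--Neveu domination of the resulting increasing processes, and the key observation that the reflection term has a favourable sign only in the $e^{-\lambda Y}$ direction, which forces one to bound $K_T$ there first and feed it into the other direction. The one place where you genuinely streamline the argument is the treatment of $K_T$: by keeping $2\lambda\int_0^{\cdot}e^{-2\lambda Y_{s^-}}\,dK_s$ inside the Garsia--Neveu-dominated increasing process you obtain all weighted moments $\mathbb{E}\bigl[e^{-2p\lambda Y_*}K_T^{p}\bigr]$ in one stroke, whereas the paper first extracts only a weighted second moment of $K_T$ (via Burkholder--Davis--Gundy applied to the jump martingale), is then forced to run the Garsia--Neveu step in the $+$ direction with the exponent $3/2<2$ so that H\"older can reach that second moment, and recovers $\mathbb{E}[K_T^{p}]$ for general $p$ only at the very end from the representation $K_T=Y_0-\xi-\int_0^T f\,dA_s+\int_0^T\!\int_E U\,q(ds\,de)$ together with a generalized Burkholder--Davis--Gundy inequality. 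Your route avoids both the $3/2$-exponent trick and the generalized BDG step; the cost is only the extra H\"older applications needed to strip the weights $e^{\pm2p\lambda Y_*}$, which is why you should not expect to land exactly on the paper's constant $36p\lambda(1+\beta\|A_T\|_{\infty})$ in the exponent --- but any fixed multiple of $p\lambda(1+\beta\|A_T\|_{\infty})$ there is equally usable downstream, since $Y_*$ has exponential moments of every order.
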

}

{\color{blue}
\begin{proof}
 In the sequel, we assume without loss of generality that all local martingales in the derivation are true martingales, otherwise one can take advantage of a standard localization and monotone convergence argument.Define $\underline G_t=-Y_t+\int_0^t\alpha_sdA_s+\int_0^t\beta|Y_s|dA_s$. We first claim that $e^{\lambda\underline G_{\cdot}}$ is a positive submartingale. Indeed, applying It\^o's formula to $e^{\lambda\underline G_t}$, we obtain,
\begin{equation}
\label{ebarG}
\begin{aligned}
de^{\lambda\underline G_t}&=e^{\lambda\underline G_{t^-}}\left(\lambda d \underline G_t+\int_E\left(e^{\lambda U_t(e)}-\lambda U_t(e)-1\right)p(dtde)\right)\\
&=e^{\lambda\underline G_{t^-}}\left(\left[\lambda\beta |Y_{t}| dA_t-\lambda d Y_t+\lambda\alpha_tdA_t\right]+\int_E\left(e^{-\lambda U_t(e)}+\lambda U_t(e)-1\right)p(dtde)\right)\\
&=e^{\lambda\underline G_{t^-}}\left(\left[\lambda\beta |Y_{t}| dA_t+\lambda \left(f(t,Y_t,U_t)dA_t-\int_EU_t(e)q(dtde)+dK_t\right)+\lambda\alpha_tdA_t\right]\right.\\
&\quad\quad\quad\quad\left.+\int_E\left(e^{-\lambda U_t(e)}+\lambda U_t(e)-1\right)p(dtde)\right)\\
&=e^{\lambda\underline G_{t^-}}\left(\left[\lambda\beta |Y_{t}|+\lambda f(t,Y_t,U_t)+\lambda\alpha_t+j_1(-\lambda U_t)\right]dA_t+\lambda dK_t+\int_E\left(e^{-\lambda U_t(e)}-1\right)q(dtde)\right)\\
&\ge  e^{\lambda\underline G_{t^-}}\int_E\left(e^{-\lambda U_t(e)}-1\right)q(dtde) ,
\end{aligned}
\end{equation}
where we make use of the growth condition of $f$ in the last inequality.

Then,
\begin{equation}
\label{compact bar G}
d e^{\lambda\underline G_t}=e^{\lambda\underline G_{t^-}}(d\underline A_t+\lambda dK_t+d\underline M_t).
\end{equation}
where,
$$
\underline M_t=\int_0^t\int_E\left(e^{-\lambda U_s(e)}-1\right)q(dsde),
$$
and $\underline A$ is a non-decreasing process with $\underline A_0=0$, with the form
$$
\underline A_t=\int_0^t\left(\lambda \left(f(t,Y_s,U_s)+\alpha_s+\beta |Y_s|\right)+j_1(-\lambda U_s(e))\right)dA_s.
$$

Now we are going to estimate the quadratic variation of $\underline{M}$ :
$$
d[\underline{M}]_t=\int_E\left(e^{-\lambda U_t(e)}-1\right)^2 \phi_t\left(de\right) d A_t+\int_E\left(e^{ -\lambda U_t(e)}-1\right)^2 q(dtde).
$$
Obviously, by (\ref{compact bar G}),
$$
d[e^{\lambda\underline{G}}]_t=e^{2 \lambda\underline{G}_{t-}} d[\underline{M}]_t.
$$
We also find the predictable quadratic variations by direct calculation, 
$$
d\langle \underline M\rangle_t=\int_E\left(e^{- \lambda U_t(e)}-1\right)^2 \phi_t\left(de\right) d A_t,
$$
and
$$
d\langle e^{\lambda\underline G}\rangle_t= e^{2 \lambda\underline{G}_{t-}} d\langle\underline{M}\rangle_t.
$$

Then, for any stopping time $\sigma \leq T$, it holds that
\begin{equation}
\label{M-qv}
\langle\underline{M}\rangle_{T}-\langle\underline{M}\rangle_\sigma =\int_{\sigma}^T \frac{d\langle e^{\lambda\underline{G}}\rangle_t}{e^{2 \lambda\underline{G}_t-}}
\leq \sup _{0 \leq t \leq T}\left(e^{-2 \lambda\underline{G}_t}\right)\left(\langle e^{\lambda\underline{G}}\rangle_{T}-\langle e^{\lambda\underline{G}}\rangle_\sigma\right).
\end{equation}
Next, we find a priori estimate of $\langle e^{\lambda\underline{G}}\rangle_T-\langle e^{\lambda\underline{G}}\rangle_\sigma$ via It\^o's formula,
\begin{equation}
\label{ito}
d e^{2\lambda\underline G_t}=2 e^{2\lambda \underline{G}_t-}\left(d \underline{M}_t+d \underline{A}_t+\lambda dK_t\right)+d[e^{\lambda\underline{G}}]_t\ge 2 e^{2\lambda \underline{G}_t-}d \underline{M}_t+e^{2\lambda \underline{G}_t-}\int_E\left(e^{-\lambda U_t(e)}-1\right)^2 q(dtde)+d\langle e^{\lambda\underline{G}}\rangle_t.
\end{equation}
Taking conditional expectation on both sides of (\ref{ito}), we obtain,
$$
\mathbb{E}\left[\langle e^{\lambda\underline{G}}\rangle_T-\langle e^{\lambda\underline{G}}\rangle_\sigma \mid \mathcal G_\sigma\right]\leqslant \mathbb{E}\left[e^{2\lambda \underline{G}_T}-e^{2 \lambda\underline{G}_\sigma} \mid \mathcal G_\sigma\right]
\leq \mathbb{E}\left[e^{2 \lambda\underline{G}_T} 1_{\sigma<T} \mid \mathcal G_\sigma\right].
$$
Then, making use of Garsia-Neveu Lemma, see for example \cite[Lemma 4.3]{barrieu2013monotone}, it turns out that for each $p\ge 1$,
$$
\mathbb{E}\left[\left(\langle e^{\lambda\underline{G}}\rangle_T\right)^p\right] \leqslant p^p \mathbb{E}\left[e^{2 p\lambda \underline{G}_T}\right].
$$
Then, by (\ref{M-qv}),
\begin{equation}
\label{bar M}
\begin{aligned}
\mathbb{E}\left[\left(\langle \underline{M}\rangle_T\right)^p\right] & \leqslant \mathbb{E}\left[\sup _{0 \leq t \leqslant T}\left(e^{-2 p\lambda \underline{G}_t}\right)\left(\langle e^{\lambda\underline G}\rangle_T\right)^p\right] \\
& \leqslant\left(\mathbb{E}[\sup _{0 \leq t \leq T} e^{-4 p\lambda\underline{G}_t}]\right)^{\frac{1}{2}}\left(\mathbb{E}\left[\left(\langle e^{\lambda\underline{G}}\rangle_T\right)^{2 p}\right]\right)^{\frac{1}{2}} \\
& \leqslant\left(\mathbb{E}\left[\sup _{0 \leq t \leq T} e^{-4 p\lambda \underline{G} _t}\right]\right)^{\frac{1}{2}} \cdot(2 p)^p\left(\mathbb{E}\left[e^{4 p \lambda\underline{G}_T}\right]\right)^{\frac{1}{2}} \\
& \leqslant(2 p)^p \mathbb{E}\left[\sup _{0 \leq t \leq T} e^{4 p\lambda\widetilde{G_t}}\right]\\
&\le C_p\mathbb E\left[e^{8p\lambda(1+\beta\|A_T\|_\infty)Y_*}\right],
\end{aligned}
\end{equation}
where
$
\widetilde{G_t}=|Y_t|+\int_0^t\alpha_sdA_s+\int_0^t\beta|Y_s|dA_s,
$
and $C_p$ is a positive constant depending on $p$. That is to say, for each $p\ge 1$,
\begin{equation}
\label{e^-U}
\mathbb E\left[\left(\int_0^T\int_E\left(e^{-\lambda U_t(e)}-1\right)^2\phi_t(de)dA_t\right)^p\right]\le C_p\mathbb E\left[e^{8p\lambda(1+\beta\|A_T\|_\infty)Y_*}\right]<\infty.
\end{equation}

Next, applying It\^{o}'s formula to $e^{-\lambda Y_t}$, we deduce that
$$
\begin{aligned}
e^{-\lambda Y_{T}}-e^{-\lambda Y_0} &= \int_0^{T}\lambda e^{-\lambda Y_{s^-}}f(s,Y_s,U_s)dA_s+\int_0^{T}\int_Ee^{-\lambda Y_{s^-}}(e^{-\lambda U_s(e)}+\lambda U_s(e)-1)\phi_s(de)dA_s\\
&\quad +\int_0^{T}e^{-\lambda Y_{t^-}}\int_E(e^{-\lambda U(e)}-1)q(dtde)+\int_0^{T}\lambda e^{-Y_{t^-}}dK_t\le e^{\lambda Y^-_*}.
\end{aligned}
$$
With the help of the growth condition of $f$, it turns out that:
$$
\begin{aligned}
&\int_0^{T}\lambda e^{-\lambda Y_{t^-}}(-\alpha_t-\beta|Y_t|-j_\lambda (t,-U_t))dA_t+\int_0^{T}\int_Ee^{-\lambda Y_{t^-}}(e^{-\lambda U_t(e)}+\lambda U_t(e)-1)\phi_t(de)dA_t\\
&+\int_0^{T}e^{-\lambda Y_{t^-}}\int_E(e^{-\lambda U_t(e)}-1)q(dtde)+\int_0^{T}\lambda e^{-\lambda Y_{t^-}}dK_t\le e^{\lambda Y^-_*}.
\end{aligned}
$$
Equivalently,
\begin{equation}
\label{estimate_K_U}
\lambda e^{-\lambda Y^+_*}K_{T}\le\int_0^{T}\lambda e^{-\lambda Y_{t^-}}dK_t
\le\int_0^{T}\lambda e^{-\lambda Y_{t^-}}(\alpha_s+\beta|Y_s|)dA_s+ e^{\lambda Y^-_*}-\int_0^{T}e^{-\lambda Y_{t^-}}\int_E(e^{-\lambda U_t(e)}-1)q(dtde).
\end{equation}
Note that $\int_0^{T}\lambda e^{-\lambda Y_{t^-}}(\alpha_s+\beta|Y_s|)dA_s+e^{\lambda Y^-_*}\le \lambda e^{\lambda Y^-_*}\int_0^T\alpha_s dA_s+\beta e^{2\lambda Y_*}\|A_T\|_\infty+e^{\lambda Y^-_*}\le \tilde C e^{2\lambda Y_*}$, for some positive random variable $\tilde C\in L^p$ for each $p\ge 1$.
With the help of Burkholder-Davis-Gundy inequality and making use of (\ref{bar M}) for $p=2$, we obtain, for each $p\ge1$,
\begin{equation}
\label{K2estimate}
\begin{aligned}
\mathbb E\left[\lambda^2 e^{-2p\lambda Y^+_*}K_{T}^2\right]\le\mathbb E\left[\lambda^2 e^{-2\lambda Y^+_*}K_{T}^2\right]&\le 2\mathbb E\left[\tilde C^2e^{4\lambda Y_*}\right]+2\mathbb E\left[\left|\int_0^{T}e^{-\lambda Y_{t^-}}\int_E(e^{-\lambda U_t(e)}-1)q(dtde)\right|^2\right]\\
&\le  2\mathbb E\left[\tilde C^2e^{4\lambda Y_*}\right]+2C\mathbb E\left[\int_0^{T}e^{-2\lambda Y_{t^-}}\int_E(e^{-\lambda U_t(e)}-1)^2\phi_t(de)dA_t\right]\\
&\le 2\mathbb E\left[\tilde C^2e^{4\lambda Y_*}\right]+2C\mathbb E\left[e^{2\lambda Y_*}\int_0^{T}\int_E(e^{-\lambda U_t(e)}-1)^2\phi_t(de)dA_t\right]\\
&\le 2\mathbb E\left[\tilde C^2e^{4\lambda Y_*}\right]+C\mathbb E\left[e^{4\lambda Y_*}\right]+C\mathbb E\left[\left(\int_0^{T}\int_E(e^{-\lambda U_t(e)}-1)^2\phi_t(de)dA_t\right)^2\right]\\
&\le 2\mathbb E\left[\tilde C^2e^{4\lambda Y_*}\right]+C\mathbb E\left[e^{4\lambda Y_*}\right]+C\mathbb E\left[e^{16\lambda(1+\beta\|A_T\|_\infty)Y_*}\right]\\
&\le C\mathbb E\left[e^{16\lambda(1+\beta\|A_T\|_\infty)Y_*}\right],
\end{aligned}
\end{equation}
where $C$ is a positive constant depending only on the constants given in the assumptions and differing from line to line.

\color{red}
Similarly, define $\bar G_t=Y_t+\int_0^t\alpha_sdA_s+\int_0^t\beta|Y_s|dA_s$ and for each $p\ge 1$, applying It\^o's formula to $e^{p\lambda\bar G_t}$, we obtain,
\begin{equation}
\label{ebarG2}
\begin{aligned}
de^{p\lambda\bar G_t}&=e^{p\lambda\bar G_{t^-}}\left(p\lambda d \bar G_t+\int_E\left(e^{p\lambda U_t(e)}-p\lambda U_t(e)-1\right)p(dtde)\right)\\
&=e^{p\lambda\bar G_{t^-}}\left(\left[p\lambda\beta |Y_{t}| dA_t+p\lambda d Y_t+p\lambda\alpha_tdA_t\right]+\int_E\left(e^{p\lambda U_t(e)}-p\lambda U_t(e)-1\right)p(dtde)\right)\\
&=e^{p\lambda\bar G_{t^-}}\left(\left[p\lambda\beta |Y_{t}| dA_t+p\lambda \left(-f(t,Y_t,U_t)dA_t+\int_EU_t(e)q(dtde)- dK_t\right)+\lambda\alpha_tdA_t\right]\right.\\
&\quad\quad\quad\left.+\int_E\left(e^{p\lambda U_t(e)}-p\lambda U_t(e)-1\right)p(dtde)\right)\\
&=e^{p\lambda\bar G_{t^-}}\left(\left[p\lambda\beta |Y_{t}|-p\lambda f(t,Y_t,U_t)+p\lambda\alpha_t+j_1(p\lambda U_t)\right]dA_t+\int_E\left(e^{p\lambda U_t(e)}-1\right)q(dtde)-p\lambda dK_t\right)\\
&\ge e^{p\lambda\bar G_{t^-}}\left(\left[-pj_{\lambda}(U_t)+j_1(p\lambda U_t)\right]dA_t+\int_E\left(e^{p\lambda U_t(e)}-1\right)q(dtde)-p\lambda dK_t\right)\\
&\ge  e^{p\lambda\bar G_{t^-}}\int_E\left(e^{p\lambda U_t(e)}-1\right)q(dtde)-p\lambda e^{p\lambda\bar G_{t^-}}dK_t ,
\end{aligned}
\end{equation}
where we make use of the growth condition of $f$ in the first inequality and the last inequality follows from the fact that $j_\lambda(ku)\ge kj_\lambda(u)$, for each $k\ge 1$.

Then,
\begin{equation}
\label{compact bar G2}
d e^{p\lambda\bar G_t}=e^{p\lambda\bar G_{t^-}}(d\bar A_t+d\bar M_t-p\lambda dK_t),
\end{equation}
where
$$
\bar M_t=\int_0^t\int_E\left(e^{p\lambda U_s(e)}-1\right)q(dsde),
$$
and $\bar A$ is a non-decreasing process with $\bar A_0=0$, with the form
$$
\bar A_t=\int_0^t\left(p\lambda \left(-f(t,Y_s,U_s)+\alpha_s+\beta |Y_s|\right)+j_1(p\lambda U_s(e))\right)dA_s.
$$

Now we are going to estimate the quadratic variation of $\bar{M}$ :
$$
d[\bar{M}]_t=\int_E\left(e^{p\lambda U_t(e)}-1\right)^2 \phi_t\left(de\right) d A_t+\int_E\left(e^{p \lambda U_t(e)}-1\right)^2 q(dtde).
$$
Obviously, by (\ref{compact bar G2}),
$$
d[e^{p\lambda\bar{G}}]_t=e^{2 p\lambda\bar{G}_{t-}} d[\bar{M}]_t.
$$
We also find the predictable quadratic variations by direct calculation, 
$$
d\langle \bar M\rangle_t=\int_E\left(e^{ p\lambda U_t(e)}-1\right)^2 \phi_t\left(de\right) d A_t,
$$
and
$$
d\langle e^{p\lambda\bar G}\rangle_t= e^{2 p\lambda\bar{G}_{t-}} d\langle\bar{M}\rangle_t.
$$

Then, for any stopping time $\sigma \leq T$, it holds that
\begin{equation}
\label{M-qv2}
\langle\bar{M}\rangle_{T}-\langle\bar{M}\rangle_\sigma =\int_{\sigma}^T \frac{d\langle e^{p\lambda\bar{G}}\rangle_t}{e^{2p \lambda\bar{G}_t-}}
\leq \sup _{0 \leq t \leq T}\left(e^{-2 p\lambda\bar{G}_t}\right)\left(\langle e^{p\lambda\bar{G}}\rangle_{T}-\langle e^{p\lambda\bar{G}}\rangle_\sigma\right).
\end{equation}
Next, we find a priori estimate of $\langle e^{p\lambda\bar{G}}\rangle_T-\langle e^{p\lambda\bar{G}}\rangle_\sigma$ via It\^o's formula to $e^{2p\lambda\bar G}$,
\begin{equation}
\label{ito2}
d e^{2p\lambda\bar G_t}+2p\lambda e^{2p\lambda \bar{G}_t-}dK_t=2 e^{2p\lambda \bar{G}_t-}\left(d \bar{M}_t+d \bar{A}_t\right)+d[e^{p\lambda\bar{G}}]_t\ge 2 e^{2p\lambda \bar{G}_t-}d \bar{M}_t+e^{2p\lambda \bar{G}_t-}\int_E\left(e^{p\lambda U_t(e)}-1\right)^2 q(dtde)+d\langle e^{p\lambda\bar{G}}\rangle_t.
\end{equation}
Taking conditional expectation on both sides of (\ref{ito2}), we obtain,
$$
\begin{aligned}
\mathbb{E}\left[\langle e^{p\lambda\bar{G}}\rangle_T-\langle e^{p\lambda\bar{G}}\rangle_\sigma \mid \mathcal G_\sigma\right]&\leqslant \mathbb{E}\left[e^{2p\lambda \bar{G}_T}+\int_0^T 2p\lambda e^{2p\lambda \bar{G}_t-}dK_t-e^{2 p\lambda\bar{G}_\sigma}-\int_0^{\sigma} 2p\lambda e^{2p\lambda \bar{G}_t-}dK_t \mid \mathcal G_\sigma\right]\\
&\leq \mathbb{E}\left[\left(e^{2p \lambda\bar{G}_T}+\int_0^T 2p\lambda e^{2p\lambda \bar{G}_t-}dK_t\right) 1_{\sigma<T} \mid \mathcal G_\sigma\right].
\end{aligned}
$$
Then, making use of Garsia-Neveu Lemma again, it turns out that
$$
\mathbb{E}\left[\left(\langle e^{p\lambda\bar{G}}\rangle_T\right)^{3/2}\right] \leqslant (\frac{3}{2})^{3/2} \mathbb{E}\left[\left(e^{2p \lambda\bar{G}_T}+\int_0^T 2p\lambda e^{2p\lambda \bar{G}_t-}dK_t\right)^{3/2}\right].
$$
Then, by (\ref{M-qv2}) and (\ref{K2estimate}),
\begin{equation}
\begin{aligned}
\mathbb{E}\left[\langle \bar{M}\rangle_T\right] & \leqslant \mathbb{E}\left[\sup _{0\leqslant t \leqslant T}\left(e^{-2p \lambda \bar{G}_t}\right)\langle e^{p\lambda\bar G}\rangle_T\right] \\
& \leqslant\left(\mathbb{E}[\sup _{0\leq t \leq T} e^{-6 p\lambda\bar{G}_t}]\right)^{\frac{1}{3}}\left(\mathbb{E}\left[\left(\langle e^{p\lambda\bar{G}}\rangle_T\right)^{3/2}\right]\right)^{\frac{2}{3}} \\
& \leqslant \frac{3}{2}\left(\mathbb{E}\left[\sup _{0\leq t \leq T} e^{-6p\lambda \bar{G} _t}\right]\right)^{\frac{1}{3}}\left(\mathbb{E}\left[\left(e^{2p \lambda\bar{G}_T}+\int_0^T 2p\lambda e^{2p\lambda \bar{G}_t-}dK_t\right)^{3/2}\right]\right)^{\frac{2}{3}}\\
& \leqslant C\left(\mathbb{E}\left[\sup _{0\leq t \leq T} e^{-6p\lambda \bar{G} _t}\right]\right)^{\frac{1}{3}}\left(\mathbb{E}\left[e^{3p \lambda\bar{G}_T}+\left(\int_0^T 2p\lambda e^{2p\lambda \bar{G}_t-}dK_t\right)^{3/2}\right]\right)^{\frac{2}{3}}\\
& \leqslant C_p\left(\mathbb{E}\left[\sup _{0 \leq t \leq T} e^{-6p\lambda \bar{G} _t}\right]\right)^{\frac{1}{3}}\left(\mathbb{E}\left[e^{3p \lambda\bar{G}_T}+e^{3p\lambda \bar{G}_*}K_T^{3/2}\right]\right)^{\frac{2}{3}}\\
 & = C_p\left(\mathbb{E}\left[\sup _{0 \leq t \leq T} e^{-6p\lambda \bar{G} _t}\right]\right)^{\frac{1}{3}}\left(\mathbb{E}\left[e^{3 p\lambda\bar{G}_T}+e^{3p\lambda \bar{G}_*+\frac{3}{2}p\lambda Y^+_*}e^{-\frac{3}{2}p\lambda Y^+_*}K_T^{3/2}\right]\right)^{\frac{2}{3}}\\
  & \leqslant C_p\left(\mathbb{E}\left[\sup _{0 \leq t \leq T} e^{-6p\lambda \bar{G} _t}\right]\right)^{\frac{1}{3}}\left(\mathbb{E}\left[e^{3p \lambda\bar{G}_T}\right]+\left(\mathbb E\left[e^{12p\lambda \bar{G}_*+6p\lambda Y^+_*}\right]\right)^{1/4}\left(\mathbb E\left[e^{-2p\lambda Y^+_*}K_T^2\right]\right)^{3/4}\right)^{\frac{2}{3}}\\
  & \leqslant C_p\left(\mathbb{E}\left[\sup _{0 \leq t \leq T} e^{-6p\lambda \bar{G} _t}\right]\right)^{\frac{1}{3}}\left(\mathbb{E}\left[e^{3p \lambda\bar{G}_T}\right]+\left(\mathbb E\left[e^{12p\lambda \bar{G}_*+6p\lambda Y^+_*}\right]\right)^{1/4}\left(\mathbb E\left[e^{16\lambda(1+\beta\|A_T\|_\infty)Y_*}\right]\right)^{3/4}\right)^{\frac{2}{3}}\\
&\le C_p\mathbb E\left[e^{36p\lambda(1+\beta\|A_T\|_\infty)Y_*}\right]<\infty,
\end{aligned}
\end{equation}
where $C$ is a  positive constant depending only on the constants given in the assumptions and $C_p$ is a  positive constant also depending on $p$ however  varies from line to line. 
That is to say,
\begin{equation}
\label{e^U0}
\mathbb E\left[\int_0^T\int_E\left(e^{p\lambda U_t(e)}-1\right)^2\phi_t(de)dA_t\right]\le C_p\mathbb E\left[e^{36p\lambda(1+\beta\|A_T\|_\infty)Y_*}\right]<\infty.
\end{equation}
\textcolor{blue}{We want to  clarify  that in fact, the constant $\frac{3}{2}$ in the derivation above does not matter too much and can be replaced by any $1<q<2$.}

Making use of the inequality $(x-1)^{2p}\le (x^p-1)^2$ for $x>0$ and $p\ge 1$, we observe that for $p\ge 1$,
\begin{equation}
\mathbb E\left[\int_0^T\int_E\left(e^{\lambda U_t(e)}-1\right)^{2p}\phi_t(de)dA_t\right]\le\mathbb E\left[\int_0^T\int_E\left(e^{p\lambda U_t(e)}-1\right)^2\phi_t(de)dA_t\right]\le C_p\mathbb E\left[e^{36p\lambda(1+\beta\|A_T\|_\infty)Y_*}\right]<\infty.
\end{equation}
Thus, with the help of H\"older's inequality, we obtain, for each $p\ge 1$,
\begin{equation}
\label{e^U}
\mathbb E\left[\left(\int_0^T\int_E\left(e^{\lambda U_t(e)}-1\right)^{2}\phi_t(de)dA_t\right)^p\right]\le C_p\mathbb E\left[\int_0^T\int_E\left(e^{\lambda U_t(e)}-1\right)^{2p}\phi_t(de)dA_t\right]\le C_p\mathbb E\left[e^{36p\lambda(1+\beta\|A_T\|_\infty)Y_*}\right]<\infty.
\end{equation}
Then, combining (\ref{e^-U}) and (\ref{e^U}), we conclude that for each $p\ge 1$,
$$
\mathbb E\left[\left(\int_0^T\int_E|U_t(e)|^2\phi_t(de)dA_t\right)^{\frac{p}{2}}\right]\le\left(E\left[\left(\int_0^T\int_E|U_t(e)|^2\phi_t(de)dA_t\right)^p\right]\right)^{1/2}\le C_p\mathbb E\left[e^{36p\lambda(1+\beta\|A_T\|_\infty)Y_*}\right]<\infty.
$$

We finish the proof by showing that for each $p\ge 1$, $\mathbb E[K_T^p]<\infty$. 
Note that 
$K_T=Y_0-\xi-\int_0^T f\left(s,Y_s, U_s\right) d A_s +\int_0^T \int_E U_s(e) q(d s, d e)$, then,
$$
\begin{aligned}
\mathbb{E}\left[K_T^p\right] \leq & C_p \mathbb{E}\left[Y_*^p+\left(\int_0^T \alpha_t d A_t+\int_0^T \beta\left|Y_t\right| d A_t+\frac{1}{\lambda} \int_0^T \left(j_\lambda\left(U_t\right)+j_\lambda\left(-U_t\right)\right) d A_t\right)^p+\left|\int_0^T \int_E U_t(e)q(dtde)\right|^p\right] \\
\leq & C_p \mathbb{E}\left[1 +Y_*^p+\left(\int_0^T \int_E\left(e^{\lambda U_t(e)}-1+e^{-\lambda U_t(e)}-1\right) \phi_t(d e) d A_t\right)^p +\left|\int_0^T \int_E U_t(e)q(dtde) \right|^p\right]\\
\leq & C_p \mathbb{E}\left[1 +Y_*^p+\left(\int_0^T \int_E\left(e^{\lambda U_t(e)}-1\right)^2\phi_t(d e) d A_t\right)^{p/2}+\left(\int_0^T\int_E\left(e^{-\lambda U_t(e)}-1\right)^2 \phi_t(d e) d A_t\right)^{p/2}\right.\\
&\quad\quad\quad\left.+\left|\int_0^T \int_E U_t(e)q(dtde)\right|^p\right]\\
\le & C_p\mathbb E\left[e^{36p\lambda(1+\beta\|A_T\|_\infty)Y_*}\right]+C_p\mathbb E\left[\left|\int_0^T \int_E U_t(e)q(dtde)\right|^p\right],
\end{aligned}
$$
where we make use of (\ref{e^-U}), (\ref{e^U}) and H\"older's inequality in the last inequality. To deal with the second term, we take advantage of a generalized Burkholder-Davis-Gundy inequality (\cite[Theorem 2.1]{Hern_ndez_Hern_ndez_2022}) and obtain,
$$
\begin{aligned}
\mathbb E\left[\left|\int_0^T \int_E U_t(e)q(dtde)\right|^p\right]&\le C_p\mathbb E\left[\left(\left[\int_0^\cdot\int_E U_t(e)q(dtde)\right]_T\right)^{p/2}\right]\\
&\le C_p\mathbb E\left[\max\left\{\left(\int_0^T\int_E|U_t(e)|^2\phi_t(de)dA_t\right)^{p/2},\int_0^T\int_E|U_t(e)|^p\phi_t(de)dA_t\right\}\right]\\
&\le C_p\mathbb E\left[\max\left\{\left(\int_0^T\int_E(e^{\lambda|U_t(e)|}-1)\phi_t(de)dA_t\right)^{p/2},\int_0^T\int_E(e^{\lambda|U_t(e)|}-1)\phi_t(de)dA_t\right\}\right]\\
&\le C_p\mathbb E\left[\left(\int_0^T\int_E(e^{\lambda|U_t(e)|}-1)^2\phi_t(de)dA_t\right)^{p/4}+\left(\int_0^T\int_E(e^{\lambda|U_t(e)|}-1)^2\phi_t(de)dA_t\right)^{1/2}\right]\\
&\le  C_p\mathbb E\left[e^{36p\lambda(1+\beta\|A_T\|_\infty)Y_*}\right],
\end{aligned}
$$
where we make use of (\ref{e^-U}), (\ref{e^U}) and H\"older's inequality again in the last inequality. Thus, we conclude that for each $p\ge1$, 
$$
\mathbb{E}\left[K_T^p\right] \leq C_p\mathbb E\left[e^{36p\lambda(1+\beta\|A_T\|_\infty)Y_*}\right]<\infty,
$$
and end the proof.

\end{proof}
}

\begin{remark}
Based on Garsia-Neveu Lemma, \cite[Proposition 4.5]{karoui2016quadratic} presents a priori estimation for quadratic variation of quadratic exponential semimartingales and then constructs existence of quadratic BSDEs with jumps. However, our proof for RBSDEs is quite different and more complicated as a result of the additional component $K$. 
\end{remark}

\section{ Existence of convex / concave quadratic exponential RBSDEs with bounded terminal and obstacle}
\label{section existence for bdd}

We are at the position to show the existence of quadratic RBSDEs. Before proceeding with the proof, we present the following lemma which provides essential properties of the auxiliary drivers. Remerber that without special announcement, we always assume that $f$ is convex  with respect to $u$.  The concave generator scenario can be handled with a parallel discussion similar as in \cite{briand2008quadratic}. For $t\in[0,T]$,
on  $(y, u) \in \mathbb R \times L^2(E,\mathcal{B}(E),\phi_t(\omega,dy))$, define a set of auxiliary generators $\left( f^{n}\right)_n$ as follows.
$$
\begin{aligned}
{f}^{n}(t, y, u)=\inf _{ r\in L^{2}(E,\mathcal{B}(E),\phi_t(\omega,dy))} \left\{f(t, y, r)+n \|u-r\|_t\right\}.
\end{aligned}
$$
The properties of the auxiliary drivers read as follows. Refer to {\color{red}\cite[Lemma 3.8]{2310.14728}} for the proof of Lemma \ref{lemma monotonicity}.

\begin{lemma}
\label{lemma monotonicity}
Under the assumptions (H1'), (H2)-(H5),

(i) The sequence $\{{f}^{n}\}_n$  are globally Lipschitz with respect to (y, u) in $\mathbb R \times L^2(E,\mathcal{B}(E),\phi_t(\omega,dy))$. 

(ii) 
The sequence $\{f^{n}\}_n$ is convex with respect to $u$ if $f$ is convex with respect to $u$, for $u\in L^2(E,\mathcal{B}(E),\phi_t(\omega,dy))$.

(iii)
For $t\in[0,T]$,  the sequence $\{{f}^{n}\}_n$ converges to ${f}$ on $(y, u) \in \mathbb R \times L^2(E,\mathcal{B}(E),\phi_t(\omega,dy))$. 

(iv) For $n>C_0$,
\begin{equation}
\label{fn bound}
-3\alpha_t-3\beta|y|-\frac{1}{\lambda}j_\lambda(t,-u)\leq f^{n}(t,y,u)\leq f(t,y,u) \leq \alpha_t+\beta|y|+\frac{1}{\lambda}j_\lambda(t,u)\leq 3\alpha_t+3\beta|y|+\frac{1}{\lambda}j_\lambda(t,u).
\end{equation}
\end{lemma}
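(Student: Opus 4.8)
The plan is to treat the four assertions as consequences of the elementary calculus of inf-convolutions, in the order (iv)--(i)--(ii)--(iii), because the finiteness of $f^n$ and the lower bound in (iv) hinge on one and the same estimate, which is then reused for the convergence. First I would record the pointwise bound on $f$ itself obtained from the hypotheses: applying (H4)(c) at $y=u=0$ gives $|f(t,0,0)|\le\alpha_t$ (since $j_\lambda(t,0)=0$), so combining (H5) with (H4)(b) yields, for every $r\in L^2(E,\mathcal B(E),\phi_t(\omega,dy))$,
$$
f(t,y,r)\ \ge\ -\alpha_t-\tilde\beta|y|-C_0\|r\|_t .
$$
The right-hand side is $C_0$-Lipschitz in $r$, so for $n>C_0$,
$$
f(t,y,r)+n\|u-r\|_t\ \ge\ -\alpha_t-\tilde\beta|y|-C_0\|u\|_t+(n-C_0)\|u-r\|_t\ \ge\ -\alpha_t-\tilde\beta|y|-C_0\|u\|_t ,
$$
which shows that $f^n(t,y,u)$ is finite and that the infimum is essentially realised by $r$ close to $u$. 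For the sharp lower bound in (iv) I would then split the infimum: for $r$ with $\|u-r\|_t$ small I use instead the tight bound $f(t,y,r)\ge-\tfrac1\lambda j_\lambda(t,-r)-\alpha_t-\beta|y|$ from (H4)(c) together with the continuity of $r\mapsto j_\lambda(t,-r)$, comparing with $j_\lambda(t,-u)$ up to a continuity error absorbed in the slack of $3\alpha_t+3\beta|y|$; for $r$ far from $u$ I use the $C_0$-Lipschitz bound above, where $(n-C_0)\|u-r\|_t$ beats the deficit. This case split and the ensuing bookkeeping of constants is the step I expect to be the main obstacle. The upper bounds in (iv) are immediate: $f^n(t,y,u)\le f(t,y,u)$ on taking $r=u$, and $f(t,y,u)\le\alpha_t+\beta|y|+\tfrac1\lambda j_\lambda(t,u)\le 3\alpha_t+3\beta|y|+\tfrac1\lambda j_\lambda(t,u)$ by (H4)(c) and nonnegativity of $\alpha_t,\beta$.

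For (i), Lipschitz continuity in $u$ is the textbook inf-convolution estimate: for $u,u'$ and $\varepsilon>0$, choosing $r$ with $f(t,y,r)+n\|u-r\|_t\le f^n(t,y,u)+\varepsilon$ and using $\|u'-r\|_t\le\|u-r\|_t+\|u-u'\|_t$ gives $f^n(t,y,u')\le f^n(t,y,u)+n\|u-u'\|_t+\varepsilon$, and letting $\varepsilon\to0$ and swapping $u,u'$ yields $|f^n(t,y,u)-f^n(t,y,u')|\le n\|u-u'\|_t$; the same near-minimiser together with (H4)(b) gives Lipschitz continuity in $y$ with constant $\tilde\beta$, so $f^n$ is globally Lipschitz in $(y,u)$ with constant $n\vee\tilde\beta$ (for $n>C_0$). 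For (ii), given $u_1,u_2$ and $\theta\in(0,1)$, pick $\varepsilon$-optimal $r_1,r_2$; then $\theta r_1+(1-\theta)r_2$ competes in the infimum defining $f^n(t,y,\theta u_1+(1-\theta)u_2)$, and convexity of $f(t,y,\cdot)$ with the triangle inequality for $\|\cdot\|_t$ gives $f^n(t,y,\theta u_1+(1-\theta)u_2)\le\theta f^n(t,y,u_1)+(1-\theta)f^n(t,y,u_2)+\varepsilon$; send $\varepsilon\to0$. Progressive measurability of $f^n$ in the sense of (H3) is preserved since the construction is pointwise in $(\omega,t,y,u)$ and, by the continuity in (H4)(a), the infimum can be restricted to a fixed countable dense subset of $L^2(E,\mathcal B(E),\phi_t(\omega,dy))$.

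For (iii), note that $n\mapsto f^n(t,y,u)$ is nondecreasing (enlarging $n$ enlarges the minimand) and $f^n(t,y,u)\le f(t,y,u)$, hence $f^n(t,y,u)\uparrow L\le f(t,y,u)$. To get $L\ge f(t,y,u)$, pick $r_n$ with $f(t,y,r_n)+n\|u-r_n\|_t\le f^n(t,y,u)+\tfrac1n\le f(t,y,u)+\tfrac1n$; combining with the $C_0$-Lipschitz lower bound on $f$ from the first paragraph shows that $(n-C_0)\|u-r_n\|_t$ stays bounded, whence $r_n\to u$ in $L^2(E,\mathcal B(E),\phi_t(\omega,dy))$, and then the continuity (H4)(a) gives $f(t,y,r_n)\to f(t,y,u)$. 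Since $f^n(t,y,u)\ge f(t,y,r_n)-\tfrac1n$, passing to the limit yields $L\ge f(t,y,u)$, so $L=f(t,y,u)$, which is (iii).
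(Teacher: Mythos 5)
Parts (i)--(iii) and the two upper bounds in (iv) are correct and are essentially the standard inf-convolution arguments that the paper delegates to its references (the $\varepsilon$-optimal-competitor Lipschitz estimate, convex combinations of near-minimisers, and monotone convergence of $f^n$ with $r_n\to u$ forced by the linear lower bound $f(t,y,r)\ge -\alpha_t-\tilde\beta|y|-C_0\|r\|_t$, which you derive correctly from (H4)(b--c) and (H5)).

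The genuine gap is exactly the step you flag as ``the main obstacle'': the lower bound in (iv). The proposed case split cannot be made to work. For $r$ close to $u$ in $\|\cdot\|_t$ you want to replace $j_\lambda(t,-r)$ by $j_\lambda(t,-u)$ ``up to a continuity error absorbed in the slack,'' but $r\mapsto j_\lambda(t,-r)$ admits no modulus of continuity for the $L^2(\phi_t)$ norm: taking $r=u-M\mathbf 1_A$ with $\phi_t(A)=\epsilon$ and $M=\epsilon^{-1/4}$ gives $\|u-r\|_t=\epsilon^{1/4}\to 0$ while $j_\lambda(t,-r)-j_\lambda(t,-u)\gtrsim \epsilon\, e^{\lambda \epsilon^{-1/4}}\to\infty$, so the discrepancy is absorbed neither by the fixed slack $2\alpha_t+2\beta|y|$ nor by $n\|u-r\|_t$. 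The far regime fares no better: the linear bound only yields $f^n(t,y,u)\ge -\alpha_t-\tilde\beta|y|-C_0\|u\|_t$, and $-C_0\|u\|_t$ is not dominated by $-\tfrac1\lambda j_\lambda(t,-u)+\mathrm{const}$ (for $u$ large, positive and concentrated on a small set, $j_\lambda(t,-u)$ grows only like the $\phi_t$-mean of $u$, which is much smaller than $\|u\|_t$). The missing idea is to exploit the convexity of $f^n$ you established in (ii): $f^n(t,y,0)\le \tfrac12 f^n(t,y,u)+\tfrac12 f^n(t,y,-u)$, hence $f^n(t,y,u)\ge 2f^n(t,y,0)-f^n(t,y,-u)$; then bound $f^n(t,y,0)\ge 2\inf_r\{(n-C_0)\|r\|_t\}+f(t,0,0)-\tilde\beta|y|\ge -\alpha_t-\tilde\beta|y|$ using (H5), $n>C_0$ and $f(t,0,0)\ge-\alpha_t$, and bound $f^n(t,y,-u)\le f(t,y,-u)\le \alpha_t+\beta|y|+\tfrac1\lambda j_\lambda(t,-u)$ from the already-proved upper half of (iv). Convexity is what transports the upper bound at $-u$ into a lower bound at $u$; no direct estimate on the infimand can see this.
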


\begin{remark}
For $f$ concave in $u$, the auxiliary generators should be defined as   
$$
\begin{aligned}
\tilde {f}^{n}(t, y, u)=\sup _{ r\in L^{2}(E,\mathcal{B}(E),\phi_t(\omega,dy))} \left\{f(t, y, r)-n \|u-r\|_t\right\}.
\end{aligned}
$$
Similar properties hold by a parallel argument.
\end{remark}

Thanks to the properties of the auxiliary generators, we are ready to construct the solution of RBSDE (\ref{reflected BSDE}) with bounded terminal and bounded obstacle. Notice that in view of (\ref{fn bound}), the parameter $(\alpha,\ \beta)$ in the  a priori estimates (\ref{|y| bound}) and (\ref{priori estimate on U and K}) are replaced by $(3\alpha,\ 3\beta)$ when estimating solutions of RBSDE($\xi,\ f^n,\ L$).

\begin{thm}
\label{thm Quadratic bounded}
Assume that assumptions (H1'), (H2)-(H3), (H4)(b-e), (H4')(a) and (H5) are fulfilled.  Then the RBSDE (\ref{reflected BSDE}) admits  a unique  solution $(Y, U, K) \in \mathcal E\times H^{2,p}\times \mathbb K^p$, for all $p\ge 1$.
\end{thm}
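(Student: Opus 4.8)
The plan is to obtain existence by a Kobylanski-type monotone approximation and to read off uniqueness directly from the comparison Theorem~\ref{comparison thm}. Indeed, if $(Y,U,K)$ and $(\widehat Y,\widehat U,\widehat K)$ were two solutions lying in $\mathcal E\times H_\nu^{2,p}\times\mathbb K^p$ for every $p\ge1$, then applying Theorem~\ref{comparison thm} with $\widehat f=f$ and $\Delta f\equiv0$ (the bound $f(t,y,u)\le\alpha_t+\beta|y|+\tfrac1\lambda j_\lambda(t,u)$ being contained in (H4)(c), and continuity in $u$ coming from (H4)(e) together with (H4)(b)) gives $Y\le\widehat Y$; the symmetric application gives $\widehat Y\le Y$, so $Y=\widehat Y$, and then It\^o's formula applied to $|Y-\widehat Y|^2\equiv0$ identifies the martingale parts, forcing $U=\widehat U$ in $H_\nu^{2,2}$ and then $K=\widehat K$. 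Hence only existence requires work.

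For existence I would fix $n>C_0$ and consider the Lipschitz RBSDE$(\xi,f^n,L)$, where $f^n$ is the inf-convolution generator of Lemma~\ref{lemma monotonicity}: globally Lipschitz in $(y,u)$, convex in $u$, nondecreasing in $n$ with $f^n\uparrow f$, and obeying the two-sided estimate (\ref{fn bound}). Since (H4')(a) is assumed and $|f^n(t,0,0)|\le3\alpha_t$, so that (H4')(c) follows from (H4)(d), Theorem~\ref{thm_lip} provides a solution; a preliminary inner truncation of $f^n$ by bounded Lipschitz generators (for which Theorem~\ref{thm_lip}(ii) gives a solution in $S^\infty\times J^\infty\times\mathbb K^2\subset\mathcal E\times H_\nu^{2,p}\times\mathbb K^p$, using $\|A_T\|_\infty<\infty$ to control $K_T$ in every $\mathbb L^p$), followed by a passage to the limit in that inner truncation via the estimates below, places the resulting solution $(Y^n,U^n,K^n)$ of RBSDE$(\xi,f^n,L)$ in $\mathcal E\times H_\nu^{2,p}\times\mathbb K^p$ for all $p\ge1$. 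Because $f^n$ satisfies (\ref{fn bound}), Corollary~\ref{sub mart |y|} applied with the parameters $(3\alpha,3\beta)$ shows $e^{p\lambda Y^n_*}$ is bounded in $S^p$ uniformly in $n$ for every $p>0$, and then Proposition~\ref{priori estimate on U and K} bounds $\|U^n\|_{H_\nu^{2,p}}$ and $\|K^n_T\|_{\mathbb L^p}$ uniformly in $n$, for every $p\ge1$.

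Next, since $f^n\le f^{n+1}$, both convex in $u$ and both dominated by $\alpha_t+\beta|y|+\tfrac1\lambda j_\lambda(t,u)$, the comparison Theorem~\ref{comparison thm} yields $Y^n_t\le Y^{n+1}_t$ for all $t$, $\mathbb P$-a.s., so $Y^n\uparrow Y$ pointwise and, by the uniform exponential bound and dominated convergence, $Y\in\mathcal E$. The delicate point is to promote this monotone convergence to convergence in $S^p$ and to extract limits $U\in H_\nu^{2,p}$ of $(U^n)$ and $K\in\mathbb K^p$ of $(K^n)$. For this I would run the usual stability computation, applying It\^o's formula to $|Y^n-Y^m|^2$ — or, to absorb the quadratic-exponential dependence on $u$, to an exponential transform of $Y^n-Y^m$ in the spirit of the proof of Theorem~\ref{comparison thm} — estimating the generator difference through the Lipschitz property in $y$, the bound (\ref{fn bound}) and the $n$-uniform estimates of the previous step, and controlling the reflection contribution by the identity
$$\int_0^T\left(Y^n_{s^-}-Y^m_{s^-}\right)d\left(K^n-K^m\right)_s=-\int_0^T\left(Y^n_{s^-}-L_{s^-}\right)dK^m_s-\int_0^T\left(Y^m_{s^-}-L_{s^-}\right)dK^n_s\le0.$$
This gives Cauchyness of $(U^n)$ in $H_\nu^{2,p}$ and of $(K^n)$ in $S^p$, hence the limits $U$ and $K$, the latter again continuous, increasing, null at $0$, with $K_T\in\mathbb L^p$ by Fatou.

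Finally, along a subsequence $f^n(s,Y^n_s,U^n_s)\to f(s,Y_s,U_s)$, by Lemma~\ref{lemma monotonicity}(iii), the continuity of $f$ in $(y,u)$, and a dominated-convergence argument based on (\ref{fn bound}) and the uniform estimates; passing to the limit in the equation, in $L_t\le Y^n_t\le Y_t$, and in $\int_0^T(Y^n_{s^-}-L_{s^-})\,dK^n_s=0$ shows that $(Y,U,K)\in\mathcal E\times H_\nu^{2,p}\times\mathbb K^p$ solves RBSDE$(\xi,f,L)$, which, with the uniqueness recorded at the outset, completes the proof. The concave case is handled identically with the sup-convolution generators $\widetilde f^n$. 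I expect the main obstacle to be precisely the stability step just described: obtaining \emph{strong} $H_\nu^{2,p}$-convergence of the jump components $U^n$ simultaneously with $S^p$-convergence of the increasing reflection processes $K^n$, since the exponential change of variables that tames the quadratic-exponential nonlinearity must be reconciled with the flat-off condition and with the $n$-uniform a priori bounds — which is exactly why the parameters $(3\alpha,3\beta)$ are tracked carefully through the a priori estimates.
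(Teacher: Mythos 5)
Your proposal follows essentially the same route as the paper's proof: inf-convolution generators $f^n$ with an inner bounded truncation $f^{n,k}$ to invoke Theorem~\ref{thm_lip}(ii), uniform a priori estimates with the parameters $(3\alpha,3\beta)$ from (\ref{fn bound}), monotone convergence of $Y^n$ via the comparison theorem, It\^o's formula on $|Y^n-Y^m|^2$ with the flat-off inequality for the Cauchy property of $(U^n)$, the $\theta$-method exponential transform for the $S^1$-convergence of $(Y^n)$, and Burkholder--Davis--Gundy for $(K^n)$. The only minor imprecision is that you claim Cauchyness of $(U^n)$ in $H_\nu^{2,p}$ and of $(K^n)$ in $S^p$ for all $p$, whereas the argument (and the paper) only delivers strong convergence for $p=2$, with membership of the limits in $H_\nu^{2,p}$ and $\mathbb K^p$ recovered from the uniform bounds and Fatou's lemma.
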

The uniqueness of Theorem \ref{thm Quadratic bounded} is inherited from Corollary \ref{uniqueness}, we turn to show the existence part of Theorem \ref{thm Quadratic bounded}.

\begin{proof}[Proof of the existence part of Theorem \ref{thm Quadratic bounded}]
With the help of Lemma \ref{lemma monotonicity} and Theorem \ref{thm_lip}, for $n>C_0$, there exists a unique solution $(Y^n,U^n, K^n)\in S^2\times H_{\nu}^{2,2}\times \mathbb K^2$ for RBSDE($\xi,\ f^n,\ L$).  To construct uniform a priori estimates for RBSDE($\xi,\ f^n,\ L$),  for $n>C_0$, we first  consider BSDE ($\xi,\ f^{n,k},\ L$), where $f^{n,k}=(f^n\wedge -k)\vee k$, for $k\in\mathbb N$. It can be easily checked that for each $k\ge 1$, $t\in[0,T]$ and $(y,\ y',\ u,\ u') \in \mathbb R \times \mathbb R\times  L^2(E,\mathcal{B}(E),\phi_t(\omega,dy))\times L^2(E,\mathcal{B}(E),\phi_t(\omega,dy))$,
$$
|f^{n,k}(t,y,u)-f^{n,k}(t,y',u')|\le|f^n(t,y,u)-f^n(t,y',u')|\le \tilde \beta|y-y'|+n\|u-u'\|_t.
$$
Thus, making use of assertion (ii) of  Theorem \ref{thm_lip}, for $n>C_0$, there exists a unique solution $(Y^{n,k},U^{n,k}, K^{n,k})\in S^\infty \times \mathbb J^\infty\times \mathbb K^2$ for RBSDE($\xi,\ f^{n,k},\ L$). 
With the help of the following facts,
$$
f^{n,k}(t,y,u)\le( f^{n,k})^+(t,y,u)\le (f^{n})^+(t,y,u)\le 3\alpha_t+3\beta|y|+\frac{1}{\lambda}j_{\lambda}(t,u),   
$$
and
$$
f^{n,k}(t,y,u)\ge-( f^{n,k})^-(t,y,u)\ge -(f^{n})^-(t,y,u)\ge -3\alpha_t-3\beta|y|-\frac{1}{\lambda}j_{\lambda}(t,-u),
$$
where $(f^{n,k})^+=f^{n,k}1_{\{f^{n,k}\ge 0\}}$, and $(f^{n,k})^-=-f^{n,k}1_{\{f^{n,k}< 0\}}$,
we deduce that
\begin{equation}
\label{fnk bound}
-3\alpha_t-3\beta|y|-\frac{1}{\lambda}j_\lambda(t,-u)\leq f^{n,k}(t,y,u)\leq 3\alpha_t+3\beta|y|+\frac{1}{\lambda}j_\lambda(t,u).
\end{equation}
Thus, in view of the a priori estimate (\ref{|y| bound}) and $Y^{n,k}\in S^\infty$, we conclude that for each $n>C_0$ and $k\ge 1$,
\begin{equation}
\label{Ykn}
\exp \left\{p \lambda\left|Y^{n,k}_t\right|\right\} \leq \mathbb{E}_t\left[\exp \left\{p \lambda e^{3\beta A_T}(|\xi|\vee L_*^+)+p\lambda  \int_t^T e^{3\beta A_s}3\alpha_s d A_s\right\}\right].
\end{equation}
The remaining proof is broken down into five steps. In the first step, we find the convergence of $\{Y^{n,k}\}$ to $Y^n$ and show that $Y^n\in\mathcal E$, which contributes to a uniform estimate for $\{Y^n\}$. Next we construct a candidate solution $Y^0$ by the comparison theorem and a candidate solution $U^0$ is also found in step 2.
In the third step, we derive an a priori estimate of $|Y^n-Y^m|$. The  regularity of $Y^0$ is proved via this estimate with the help of convergence in $S^1$ of the sequence $Y^n$ in the forth step. In the last step, we find a condidate solution $K^0$ and verify that $(Y^0, U^0, K^0)$ is truly a solution in appropriate spaces.

\textbf{Step 1: The convergence of the sequence $\{Y^{n,k}\}_k$ to $Y^n$.}

In this step, we are going to show that $\lim_{k\to\infty}\mathbb E\left[|Y^n_t-Y^{n,k}_t|^2\right]=0$. By It\^o's formula,
\begin{equation}
\label{ynk-yn}
\begin{aligned}
d(Y^{n,k}_t-Y^n_t)^2 &=2(Y_t^{n,k}-Y_t^{n})d(Y_t^{n,k}-Y_t^n)+\int_E|U_t^{n,k}-U_t^n|^2p(dtde)\\
&=2(Y_t^{n,k}-Y_t^n)\left(-f^{n,k}(t,Y_t^{n,k},U_t^{n,k})+f^{n}(t,Y_t^n,U_t^n)\right)dA_t+2(Y_t^{n,k}-Y_t^n)\int_E(U_t^{n,k}-U_t^n)q(dtde)\\
&\quad+\int_E\left|U_t^{n,k}-U_t^n\right|^2q(dtde)+\int_E\left|U_t^{n,k}-U_t^n\right|^2\phi_t(de)dA_t-2(Y_t^{n,k}-Y_t^n)(dK_t^{n,k}-dK_t^n).
\end{aligned}
\end{equation}
We claim first that for each $t\in[0,T]$,
\begin{equation}
\label{dk=0}
\mathbb E\left[\int_{t}^T2(Y_s^{n,k}-Y_s^n)(dK_s^{n,k}-dK_s^n)\right]\le 0.
\end{equation}
Indeed,
$$
\begin{aligned}
\mathbb E\left[\int_{t}^T(Y_s^{n,k}-Y_s^n)(dK_s^{n,k}-dK_s^n)\right]&=\mathbb E\left[\int_{t}^T(Y_s^{n,k}-L_s+L_s-Y_s^n)(dK_s^{n,k}-dK_s^n)\right]\\
&=\mathbb E\left[\int_{t}^T(Y_s^{n,k}-L_s)(dK_s^{n,k}-dK_s^n)\right]+\mathbb E\left[\int_{t}^T(Y_s^{n}-L_s)(dK_s^{n}-dK_s^{n,k})\right]\\
&\le \mathbb E\left[\int_{t}^T(Y_s^{n,k}-L_s)dK_s^{n,k}\right]+\mathbb E\left[\int_{t}^T(Y_s^{n}-L_s)dK_s^{n}\right]\\
&\le \mathbb E\left[\int_{0}^T(Y_s^{n,k}-L_s)dK_s^{n,k}\right]+\mathbb E\left[\int_{0}^T(Y_s^{n}-L_s)dK_s^{n}\right]=0,
\end{aligned}
$$
where we make use of the flat-off condition in the last inequality and recall that $K^n$ and $K^{n,k}$ are continuous. Then, integrating from $t$ to $T$ and taking expectation on both sides of (\ref{ynk-yn}), we obtain,
$$
\begin{aligned}
-\mathbb E[|Y^{n,k}_t-Y^n_t|^2]
&\ge 2\mathbb E\left[\int_t^T(Y_s^{n,k}-Y_s^n)\left(-f^{n,k}(s,Y_s^{n,k},U_s^{n,k})+f^{n}(s,Y_s^n,U_s^n)\right)dA_s\right]+\mathbb E\left[\int_{t}^T\int_E\left|U_s^{n,k}-U_s^n\right|^2\phi_s(de)dA_s\right],
\end{aligned}
$$
where we make use of the fact that $\int_0^{\cdot}2(Y_s^{n,k}-Y_s^n)\int_E(U_s^{n,k}-U_s^n)q(dsde)$ and $\int_{0}^{\cdot}\int_E\left|U_s^{n,k}-U_s^n\right|^2q(dsde)$ are martingales due to the integrability conditions $Y^{n,k},\ Y^{n}\in L^2(A)$ and $U^{n,k},\ U^n\in H_{\nu}^{2,2}$. Then, rearranging the terms and in view of the Lipschitz conditions of $f^{n,k}$, we obtain,
$$
\begin{aligned}
&\mathbb E[|Y^{n,k}_t-Y^n_t|^2]+\mathbb E\left[\int_t^T\int_E\left|U_s^{n,k}-U_s^n\right|^2\phi_s(de)dA_s\right]\\
&\le 2\mathbb E\left[\int_t^T(Y_s^{n,k}-Y_s^n)\left(f^{n,k}(s,Y_s^{n,k},U_s^{n,k})-f^{n}(s,Y_s^n,U_s^n)\right)dA_s\right]\\
&\le 2\mathbb E\left[\int_t^T|Y_s^{n,k}-Y_s^n|\left(\left|f^{n,k}(s,Y_s^{n,k},U_s^{n,k})-f^{n,k}(s,Y_s^n,U_s^n)\right|+\left|f^{n,k}(s,Y_s^{n},U_s^{n})-f^{n}(s,Y_s^n,U_s^n)\right|\right)dA_s\right]\\
&\le 2\mathbb E\left[\int_t^T\left(\tilde\beta|Y_s^{n,k}-Y_s^n|^2+n|Y_s^{n,k}-Y_s^n|\|U_s^{n,k}-U_s^n\|_s+|Y_s^{n,k}-Y_s^n|\left|f^{n,k}(s,Y_s^{n},U_s^{n})-f^{n}(s,Y_s^n,U_s^n)\right|\right)dA_s\right]\\
&\le 2\mathbb E\left[\int_t^T\left(\tilde\beta|Y_s^{n,k}-Y_s^n|^2+\frac{n^2}{2}|Y_s^{n,k}-Y_s^n|^2+\frac{1}{2}\|U_s^{n,k}-U_s^n\|_s^2+\frac{1}{2}|Y_s^{n,k}-Y_s^n|^2+\frac{1}{2}\left|f^{n,k}(s,Y_s^{n},U_s^{n})-f^{n}(s,Y_s^n,U_s^n)\right|^2\right)dA_s\right],
\end{aligned}
$$
where we make use of the inequality of arithmetic and geometric means in the last line. Equivalently,
$$
\begin{aligned}
\mathbb E[|Y^{n,k}_t-Y^n_t|^2]&\le (1+n^2+2\tilde\beta)\mathbb E\left[\int_t^T|Y_s^{n,k}-Y_s^n|^2dA_s\right]+\mathbb E\left[\int_t^T\left|f^{n,k}(s,Y_s^{n},U_s^{n})-f^{n}(s,Y_s^n,U_s^n)\right|^2dA_s\right]\\
&{\color{red}\le(1+n^2+2\tilde\beta)\mathbb E\left[\int_t^T|Y_s^{n,k}-Y_s^n|^2\rho(ds)\right]+\mathbb E\left[\int_t^T\left|f^{n,k}(s,Y_s^{n},U_s^{n})-f^{n}(s,Y_s^n,U_s^n)\right|^2dA_s\right]}\\
&{\color{red}=(1+n^2+2\tilde\beta)\int_t^T\mathbb E\left[|Y_s^{n,k}-Y_s^n|^2\right]\rho(ds)+\mathbb E\left[\int_t^T\left|f^{n,k}(s,Y_s^{n},U_s^{n})-f^{n}(s,Y_s^n,U_s^n)\right|^2dA_s\right]},
\end{aligned}
$$
where we adopt assumption (H1') and  the fact that $Y^{n,k},\ Y^n\in S^2$, and  Fubini's Theorem 
 is used in the last equality. Then by the backward Grownwall's inequality (see e.g. \cite[Proposition A.2]{Djehiche2021} ), it turns out that
$$
\mathbb E[|Y^{n,k}_t-Y^n_t|^2]\le \mathbb E\left[\int_t^T\left|f^{n,k}(s,Y_s^{n},U_s^{n})-f^{n}(s,Y_s^n,U_s^n)\right|^2dA_s\right]\exp\{(1+n^2+2\tilde\beta)\rho(T)\}.
$$
Recall that $f^{n,k}\to f^n$ as $k\to\infty$ and by dominated convergence theorem, it follows that
$$
\lim_{k\to\infty}\mathbb E[|Y^{n,k}_t-Y^n_t|^2]=0.
$$
Then, up to a subsequence, we have $\mathbb P-a.s.$, $Y_t^{n,k}\to Y_t^n$ as $k\to\infty$. Therefore, with the help of (\ref{Ykn}), 
\begin{equation}
\label{sub mart |yn|}
\exp \left\{p \lambda\left|Y^{n}_t\right|\right\} \leq \mathbb{E}_t\left[\exp \left\{p \lambda e^{3\beta A_T}(|\xi|\vee L_*^+)+p\lambda  \int_t^T e^{3\beta A_s}3\alpha_s d A_s\right\}\right].
\end{equation}
Thus $Y^n\in\mathcal E$, for $n>C_0$. Then, with the help of Proposition \ref{priori estimate on U and K}, we conclude
\begin{equation}
\label{un kn a priori estimate}
\sup_{n>C_0} \mathbb E\left[\left(\int_0^T\int_E|U^n_t(e)|^2\phi_t(de)dA_t\right)^{\frac{p}{2}}+(K_T^n)^p\right]\le C_p\mathbb E\left[e^{36p\lambda(1+3\beta\|A_T\|_\infty)Y_*}\right]<\infty.
\end{equation}

\textbf{Step 2: Construction of candidate solution $(Y^0, U^0)$. }

Recall that for $n>C_0$, $(Y^n, U^n, K^n)\in \mathcal E\times H_{\nu}^{2,2}\times \mathbb K^2$ is  the unique solution to RBSDE($\xi,\ f^n,\ L$) and $f^n\uparrow f$. Then, with the help of the comparison theorem \ref{comparison thm}, $Y^n_t$ is increasing with respect to $n$ for each $t\in[0,T]$. Define $Y_t^n\uparrow Y_t^0$ as $n\to\infty$. 
Then the process $Y^0$ is adapted and admits a progressive version. Therefore, we assume that the process $Y^0$ is progressively measurable.
Moreover, in view of Fatou's Lemma,
\begin{equation}
\label{Y0}
\mathbb E\left[\int_0^T|Y^0_s|^2dA_s\right]\le\mathbb E[\sup_{n>C_0}|Y^n_*|^2]\|A_T\|_\infty<\infty.
\end{equation}

With $Y^0$ in hand, we are going to find a candidate solution $U^0$.
For any $m,\ n>C_0$, applying Itô's formula to the process $\left|Y^n-Y^m\right|^2$, we can deduce that
$$
\begin{aligned}
&\int_0^T\int_E\left|U_s^n(e)-U_s^m(e)\right|^2 \phi_s(de)d A_s\\
&= -\left|Y_0^n-Y_0^m\right|^2+2 \int_0^T\left(Y_{s^-}^n-Y_{s^-}^m\right)\left(f^n\left(s, Y_s^n, U_s^n\right)-f^m\left(s, Y_s^m, U_s^m\right)\right) d A_s \\
&\quad -2 \int_0^T\int_E\left(\left(Y_{s^-}^n-Y_{s^-}^{m}\right)\left(U_s^n-U_s^m\right)+|U_s^n-U_s^m|^2\right) q(dsde) +2\int_0^T(Y_t^{n}-Y_t^m)(dK_t^{n}-dK_t^m)\\
&\leq  2\left(\int_0^T\left|Y_t^n-Y_t^m\right|^2dA_t\right)^{1/2}\left(\int_0^T\left(f^n\left(s, Y_s^n, U_s^n\right)-f^m\left(s, Y_s^m, U_s^m\right)\right)^2dA_t \right)^{1/2}\\
& \quad -\left|Y_0^n-Y_0^m\right|^2-2 \int_0^T\int_E\left(\left(Y_{s^-}^n-Y_{s^-}^{m}\right)\left(U_s^n-U_s^m\right)+|U_s^n-U_s^m|^2\right) q(dsde)+2\int_0^T(Y_t^{n}-Y_t^m)(dK_t^{n}-dK_t^m)\\
&\leq  2(\int_0^T\left|Y_t^n-Y_t^m\right|^2dA_t)^{1/2}\left( \int_0^T2\left(3\alpha_s+3\beta|Y_s^m|+\frac{1}{\lambda} \left(j_\lambda(U_s^m)+j_{\lambda}(-U_s^m)\right)\right)^2d A_s\right.\\
&\left.\quad+\int_0^T2\left(3\alpha_s+3\beta|Y_s^n|+\frac{1}{\lambda} \left(j_\lambda(U_s^n)+j_{\lambda}(-U_s^n)\right)\right)^2dA_s\right)^{1/2}\\
& \quad -\left|Y_0^n-Y_0^m\right|^2-2 \int_0^T\int_E\left(\left(Y_{s^-}^n-Y_{s^-}^{m}\right)\left(U_s^n-U_s^m\right)+|U^n-U^m|^2\right) q(dsde)+2\int_0^T(Y_t^{n}-Y_t^m)(dK_t^{n}-dK_t^m) , \quad \mathbb P \text {-a.s. }
\end{aligned}
$$
The second last term is a martingale due to the integrability condition of $Y^n,\ Y^m$ and $U^n,\ U^m$. Then taking expectation, similar to (\ref{dk=0}) and making use of Hölder's inequality, we deduce from (\ref{e^-U}) and (\ref{e^U}), with parameter $(\alpha,\ \beta)$ replaced by $(3\alpha,\ 3\beta)$,  that there exists a constant $c>0$ such that
$$
\begin{aligned}
\mathbb E\left[\left(\int_0^T\int_E\left|U_s^n-U_s^m\right|^2\phi_s(e) d A_s\right)\right]&\le c\left(\mathbb E[\int_0^T\left|Y_t^n-Y_t^m\right|^2dA_t]\right)^{1/2}-\mathbb E[\left|Y_0^n-Y_0^m\right|^2]\\
&\le c\left(\mathbb E[\int_0^T\left|Y_t^n-Y_t^0\right|^2dA_t]+\mathbb E[\int_0^T\left|Y_t^m-Y_t^0\right|^2dA_t]\right)^{1/2}+2\mathbb E[\left|Y_0^n-Y_0^0\right|^2]+2\mathbb E[\left|Y_0^m-Y_0^0\right|^2].
\end{aligned}
$$
 Hence, it follows from (\ref{Y0}) and monotone convergence theorem that
$$
\lim _{N\rightarrow \infty} \sup_{m,n\ge N}\mathbb E\left[\left(\int_0^T\int_E\left|U_s^n(e)-U_s^m(e)\right|^2\phi_s(de) d A_s\right)\right]=0.
$$
Thus, $\{U^n\}$ is a Cauchy sequence in $H_\nu^{2,2}$, which implies that there exists a $U^0\in H_\nu^{2,2}$ such that 
\begin{equation}
\label{estimate-U}
\lim _{ n \rightarrow \infty} \mathbb E\left[\left(\int_0^T\int_E\left|U_s^n(e)-U_s^0(e)\right|^2\phi_s(de) d A_s\right)\right]=0.
\end{equation}
Thus, up to a subsequence,
$$
\lim _{ n \rightarrow \infty} \left(\int_0^T\int_E\left|U_s^n(e)-U_s^0(e)\right|^2\phi_s(de) d A_s\right)=0, \ \mathbb P-a.s.
$$
Indeed, with the help of (\ref{un kn a priori estimate}) and Fatou's Lemma, $U^0\in H_{\nu}^{2,p}$ for each $p\ge 1$. More precisely, for some positive constant $C_p$ depending on $p$,
\begin{equation}
\label{u0}
\mathbb E\left[\left(\int_0^T\int_E|U_t^0(e)|^2\phi_t(de)dA_t\right)^p\right]\le C_p\sup_{n>C_0}\mathbb E\left[\left(\int_0^T\int_E|U_t^n(e)|^2\phi_t(de)dA_t\right)^p\right]<\infty.
\end{equation}

Moreover, on the set $\{(t,\omega)\in[0,T]\times \Omega:dA_t(\omega)\not=0\}$, it holds that
$$
\lim _{n \rightarrow \infty} \left(\int_E\left|U_t^n(e)-U_t^0(e)\right|^2\phi_t(de) \right)=0.
$$

\textbf{Step 3: A priori estimate of $\left|Y^n-Y^m\right|$.}

For $m, n >C_0$ and fixed $\theta \in(0,1)$. We first show that $\mathbb{P}$-a.s.
$$
\left|Y_t^n-Y_t^m\right| \leq(1-\theta)\left(\left|Y_t^m\right|+\left|Y_t^n\right|\right)+\frac{1-\theta}{\lambda} \ln \left(\sum_{i=1}^3 J_t^{m, n, i}\right), \quad t \in[0, T],
$$
where $\zeta_\theta:=\frac{\lambda e^{\tilde{\beta}\|A_T\|_{\infty}}}{1-\theta}$, and $J_t^{n, m, i}:=E\left[J_T^{n, m, i} \mid \mathscr{G}_t\right]$ for $i=1,2,3$ such that
$$
\begin{aligned}
& J_T^{n, m, 1}:=\left(D_T^m+D_T^n\right) \eta \text { with } D_t^m:=\exp \left\{\lambda e^{2 \tilde{\beta}\left\|A_T\right\|_{\infty}} \int_0^t\left(3\alpha_s+(3\beta+\tilde{\beta})\left|Y_s^m\right|\right) d A_s\right\}; \\
& D_t^n:=\exp \left\{\lambda e^{2 \tilde{\beta}\left\|A_T\right\|_{\infty}} \int_0^t\left(3\alpha_s+(3\beta+\tilde{\beta})\left|Y_s^n\right|\right) d A_s\right\}, t \in[0, T]; \\
& \eta:=\exp \left\{\zeta_\theta e^{\tilde{\beta}\left\|A_T\right\|_{\infty}}\left(1-\theta\right)|\xi|\right\}= \exp\{\lambda e^{2\tilde\beta \|A_T\|_\infty}|\xi|\} ; \\
& J_T^{n, m, 2}:=\zeta_\theta \exp \left\{\tilde{\beta}\left\|A_T\right\|_{\infty}\right\}\left(D_T^m+D_T^n\right) \exp \left\{\lambda e^{2 \tilde{\beta}\left\|A_T\right\|_{\infty}}\left(Y_*^m+Y_*^n\right)\right\}\left(K_T^m+K_T^n\right); \\
& \Upsilon_{n, m}:=\exp \left\{\zeta_\theta e^{\tilde{\beta}\left\|A_T\right\|_{\infty}}\left(Y_*^n+Y_*^m\right)\right\} ; \\
& J_T^{n, m, 3}:=\zeta_\theta e^{\tilde{\beta}\left\|A_T\right\|_{\infty}}\left(D_T^m+D_T^n\right) \Upsilon_{n, m} \int_0^T\left|\Delta_{n, m} f(s)\right| d A_s; \\
& \Delta_{n, m} f(t):=\left|f^n\left(t, Y_t^m, U_t^m\right)-f^m\left(t, Y_t^m, U_t^m\right)\right|+\left|f^m\left(t, Y_t^n, U_t^n\right)-f^n\left(t, Y_t^n, U_t^n\right)\right|,\ t \in[0, T].
\end{aligned}
$$
 We set $\tilde{Y}^{(n, m)}:=Y^n-\theta Y^m, \tilde{U}^{(n, m)}:=U^n-\theta U^m$ and define two processes
$$
a_t^{(n, m)}:=\mathbf{1}_{\left\{\tilde{Y}_t^{(n, m)} \neq 0\right\}} \frac{f^n\left(t, Y_t^n, U_t^n\right)-f^n\left(t, \theta Y_t^m, U_t^n\right)}{\tilde{Y}_t^{(n, m)}}-\tilde{\beta} \mathbf{1}_{\left\{\tilde{Y}_t^{(n, m)}=0\right\}}, \quad \tilde{A}_t^{(n, m)}:=\int_0^t a_s^{(n, m)} d A_s, \quad t \in[0, T] .
$$
For a given $t\in [0,T]$, consider a sequence of stopping times $\left(T^t_K\right)_{K \geq 0}$ defined by:
\begin{equation}
\label{def-TKt}
T_K^t=\inf \left\{s>t, \mathbb{E}\left[\exp \left(e^ {3\beta 
A_T}\lambda(\left|\xi\right|\vee L_*^+)+\int_0^T \lambda e^{3\beta A_s}3\alpha_s d A_s\right) \mid \mathcal{G}_s\right]>e^{\lambda K}\right\}.
\end{equation}
The sequence $\left(T^t_K\right)_{K \geq 0}$ converges to infinity when $K$ goes to infinity. Moreover,  from (\ref{|y| bound}), we find that the sequence $\{Y_{\cdot \wedge T_K^t}^{n}\}_{n\ge 1}$ is uniformly bounded on $(t,T]$, in precise, $\sup_n\|Y^{n}_{\cdot\wedge T_K^t}\|_{S^{\infty}(t,T]}\le K$. Let  $U_s^{n, K}=U_s^{n} 1_{s<T_K^t}$.  Then, in view of \cite[Corollary 1]{Morlais_2009}, $\mathbb P-a.s.$, $|U_s^{n,K}|\le 2K$ on $(t,T]$. 
Applying Itô's formula to the process $\Gamma_s^{n, m}:=\exp \left\{\zeta_\theta e^{\tilde{A}_s^{(n, m)}} \tilde{Y}_s^{(n, m)}\right\}$ on $s \in[t, T_K^t\wedge T]$ yields that
\begin{equation}
\label{gamma ito1}
\begin{aligned}
\Gamma_t^{n, m}= & \Gamma_{T_K^t\wedge T}^{n, m}+\int_t^{T_K^t\wedge T} \Gamma_{s^-}^{n, m}\left[\zeta_\theta e^{\tilde{A}_s^{n, m}} F_s^{n, m}-\int_E\left(e^{\zeta_\theta e^{\tilde{A}_s^{n, m}} \tilde{U}_s^{n, m}}-\zeta_\theta e^{\tilde{A}_s^{n, m}} \tilde{U}_s^{n, m}-1\right) \phi_s(d e)\right] d A_s \\
& -\int_t^{T_K^t\wedge T} \int_E \Gamma_{s^-}^{n,m}\left(e^{\zeta_\theta e^{\tilde{A}_s^{n, m}} \tilde{U}_s^{n, m}}-1\right) q(d s d e)+\zeta_\theta \int_t^{T_K^t\wedge T} \Gamma_{s^{-}} e^{\tilde{A}_s^{n, m}}\left(d K_s^{(n)}-\theta d K_s^{(m)}\right) \\
:= & \Gamma_{T_K^t\wedge T}^{n, m}+\int_t^{T_K^t\wedge T} G_s^{n, m} d A_s-\int_t^{T_K^t\wedge T} \int_E Q_s^{n, m} q(d s d e)+\zeta_\theta \int_t^{T_K^t\wedge T} \Gamma_{s^{-}}^{n, m} e^{\tilde{A}_s^{(n, m)}}\left(d K_s^{(n)}-\theta d K_s^{(m)}\right),
\end{aligned}
\end{equation}
where
$$
\begin{gathered}
F_t^{n, m}=f^n\left(t, Y_t^n, U_t^n\right)-\theta f^m\left(t, Y_t^m, U_t^m\right)-a_t^{(n, m)} \tilde{Y}_t^{(n, m)}, \\
G_t^{n, m}=\zeta_\theta \Gamma_t^{n, m} e^{\tilde{A}_t^{(n, m)}}\left(f^n\left(t, Y_t^n, U_t^n\right)-\theta f^m\left(t, Y_t^m, U_t^m\right)-a_t^{(n, m)} \tilde{Y}_t^{(n, m)}\right)-\Gamma_t^{1, n, m} j_1\left(\zeta_\theta e^{\tilde{A}_t^{(n, m)}} \tilde{U}_t^{n, m}\right) .
\end{gathered}
$$
Similarly to (\ref{cvx estimate}), (H4)(b) and the convexity of $f^n$ in $u$ show that $d t \otimes d \mathbb{P}$-a.e.
$$
f^n\left(t, Y_t^m, U_t^n\right) \leq \theta f^n\left(t, Y_t^m, U_t^m\right)+(1-\theta)\left(\alpha_t+\beta\left|Y_t^m\right|\right)+\frac{1-\theta}{\lambda} j_\lambda\left(\frac{\tilde{U}_t^{n, m}}{1-\theta}\right),
$$
which together with (\ref{gamma ito1}) implies that $d t \otimes d \mathbb{P}$-a.e.,
$$
\begin{aligned}
& G_t^{n, m}=\zeta_\theta \Gamma_t^{n, m} e^{\tilde{A}_t^{(n, m)}}\left(f^n\left(t, \theta Y_t^m, U_t^n\right)-\theta f^m\left(t, Y_t^m, U_t^m\right)\right)-\Gamma_t^{n, m} j_1\left(\zeta_\theta e^{\tilde{A}_t^{(n, m)}} \tilde{U}_t^{n, m}\right) \\
& \leq \zeta_\theta \Gamma_t^{n, m} e^{\tilde{A}_t^{(n, m)}}\left(\left|f^n\left(t, \theta Y_t^m, U_t^n\right)-f^n\left(t, Y_t^m, U_t^n\right)\right|+f^n\left(t, Y_t^m, U_t^n\right)-\theta f^m\left(t, Y_t^m, U_t^m\right)\right)-\Gamma_t^{n, m} j_1\left(\zeta_\theta e^{\tilde{A}_t^{(n, m)}} \tilde{U}_t^{n, m}\right) \\
& \leq \lambda e^{2 \tilde{\beta}\left\|A_T\right\|_{\infty}} \Gamma_t^{n, m}\left(\alpha_t+(\beta+\tilde{\beta})\left|Y_t^m\right|\right)+\zeta_\theta e^{\tilde{\beta}\left\|A_T\right\|_{\infty}} \Gamma_t^{n, m}\left|\Delta_{n, m} f(t)\right|
\end{aligned}.
$$
It follows from integration by parts that
\begin{equation}
\label{ito02}
\begin{aligned}
\Gamma_t^{n, m} \leq & D_t^m \Gamma_t^{n, m} \\
\leq & D_{T_K^t\wedge T}^m \Gamma_{T_K^t\wedge T}^{n, m}+\zeta_\theta \int_t^{T_K^t\wedge T} D_s^m \Gamma_{s^{-}}^{ n, m} e^{\tilde{A}_s^{n, m}} d K_s^n-\zeta_\theta \int_t^{T_K^t\wedge T} \int_E D_s^m \Gamma_{s^{-}}^{n, m}\left(e^{\zeta_\theta e^{\tilde{A}_s^{n, m}} \tilde{U}_s^{n, m}}-1\right) q(d s d e) \\
& \quad+\zeta_\theta e^{\tilde{\beta}\left\|A_T\right\|_{\infty}} \int_t^{T_K^t\wedge T} D_s^m \Gamma_s^{n, m}\left|\Delta_{n, m} f(s)\right| d A_s \\
& \leq D_T^m\eta^{n,m}_{T_K^t\wedge T}+\zeta_\theta e^{\tilde{\beta}\left\|A_T\right\|_{\infty}} D_T^m \int_0^T \Gamma_{s^{-}}^{n, m} d K_s^n-\zeta_\theta \int_t^T \int_E D_s^m \Gamma_{s^{-}}^{n, m}\left(e^{\zeta_\theta e^{\tilde{A}_s^{n, m}} \tilde{U}_s^{n, m}}-1\right) q(d s d e)+J_T^{n, m, 3}, \quad t \in[0, T] .
\end{aligned}
\end{equation}
where we define 
\begin{equation}
\label{etamn}
\eta^{n,m}_{T_K^t\wedge T}=\exp \left\{\zeta_\theta e^{\tilde\beta\|A_T\|}\left(|Y_{T_K^t\wedge T}^n-\theta Y_{T_K^t\wedge T}^m|\vee|Y_{T_K^t\wedge T}^m-\theta Y_{T_K^t\wedge T}^n|\right)\right\}\stackrel{K\to\infty}{\longrightarrow}\eta.
\end{equation}
for each $n,m>C_0$.
We deal with the right-hand side of (\ref{ito02}) term by term. Firstly, the flat-off condition of $\left(Y^n, U^n, K^n\right)$ implies that
$$
\int_0^T \Gamma_s^{n, m} d K_s^n  =\int_0^T \mathbf{1}_{\left\{Y_{s^{-}}^n=L_s\right\}} \Gamma_s^{n, m} d K_s^n\leq \int_0^T \mathbf{1}_{\left\{Y_{s^{-}}^n \leq Y_{s^{-}}^m\right\}} \exp \left\{\lambda e^{2 \tilde{\beta}\left\|A_T\right\|_{\infty}}\left|Y_s^m\right|\right\} d K_s^n \leq \exp \left\{\lambda e^{2 \tilde{\beta}\left\|A_T\right\|_{\infty}} Y_*^m\right\} K_T^n, \quad \mathbb{P} \text {-a.s. }
$$
We also note that
for each $p \in(1, \infty)$, (\ref{sub mart |yn|}) and (\ref{un kn a priori estimate}) imply that
{\color{red}
\begin{equation}
\label{uniform bounded Y U }
\sup _{n >C_0}\mathbb E\left[e^{p \lambda Y_*^{n}}+\left(\int_0^T\int_E\left|U_s^{n}\right|^2\phi_t(de) d A_s\right)^p+(K^n_T)^p\right]  \leq C_p \Xi\left(72 p \lambda(1+3\beta\|A_T\|_\infty),\ 3\alpha,\ 3\beta\right) ,
\end{equation}
where we make use of the notation $\Xi$ defined in Corollary \ref{sub mart |y|}. Thus, it follows that for $n,m >C_0$
\begin{equation}
\label{ess-eta-nm}
\mathbb E\left[\eta^p\right] \leq\mathbb E\left[e^{p \zeta_\theta e^{\tilde \beta \|A_T\|}(1-\theta)\left|\xi\right|}\right]\le\mathbb E\left[e^{p \lambda e^{2\tilde\beta \|A_T\|}\left|\xi\right|}\right]  \leq \Xi\left(p\lambda e^{2\tilde\beta \|A_T\|},\ 3\alpha,\ 3\beta\right);
\end{equation}
\begin{equation}
\label{ess02}
\begin{aligned}
\mathbb E\left[\Upsilon_{n,m}^p\right] & \leq \frac{1}{2}\mathbb E\left[e^{2 p \zeta_\theta e^{\tilde\beta \|A_T\|} Y_*^n}+e^{2 p \zeta_\theta e^{\tilde\beta \|A_T\|} Y_*^m}\right] \leq {\Xi}\left(\frac{2 p\lambda}{1-\theta} e^{2\tilde \beta \|A_T\|},\ 3\alpha,\ 3\beta\right).\\
\end{aligned}
\end{equation}                                                             
}
Meanwhile,

\begin{equation}
\label{delta f}
\begin{aligned}
& E\left[\left(\int_0^T\left|\Delta_{n, m} f(s)\right| d A_s\right)^p\right] \\
&\quad \leq E\left[\left(2 \int_0^T 3\alpha_t d A_t+2\left\|A_T\right\|_{\infty}\left(3\beta Y_*^m+3\beta Y_*^n\right)+\int_0^T\left(2 j_\lambda\left(U^m\right)+2 j_\lambda\left(U^n\right)+2 j_\lambda\left(-U^m\right)+2 j_\lambda\left(-U^n\right)\right) d A_s\right)^p\right]. \\
\end{aligned}
\end{equation}
Moreover,
$$
D_T^m \leq \exp \left\{\lambda e^{2 \tilde{\beta}\left\|A_T\right\|_{\infty}} \int_0^T 3\alpha_s d A_s\right\} \exp \left\{\lambda(3\beta+\tilde{\beta})\left\|A_T\right\|_{\infty} e^{2 \tilde{\beta}\left\|A_T\right\|_{\infty}} Y_*^m\right\},
$$
and then $D_T^m \in \mathbb{L}^p\left(\mathscr{G}_T\right)$ with the help of (\ref{uniform bounded Y U }). Thus, random variables $J_T^{n, m, i}, i=1,2,3$ are integrable by Young's inequality and (\ref{ess-eta-nm})-(\ref{delta f}). 
In addition, $\eta^{n,m}_{T_K^t\wedge T}\le \exp\{\frac{\lambda e^{2\tilde\beta\|A_T\|_\infty}}{1-\theta}(Y^n_*+Y^m_*)\}$,  in which the right hand side  is also integrable.

Note that $\int_t^{T_{K}^t\wedge T}\int_E D^m_s \Gamma_{s^-}^{n,m}(e^{\zeta_\theta e^{\tilde A^{n,m}_s}\tilde U^{n,m}_s}-1)q(dsde)$ is a true martingale due to the boundedness of the integrand. Taking $E\left[\cdot \mid \mathscr{G}_t\right]$ in (\ref{ito02}) yields that $\Gamma_t^{1, n, m} \leq \mathbb E_t[D_T^m\eta^{n,m}_{T^t_K\wedge T}]+\sum_{i=2}^3 J_t^{n, m, i}, \ \mathbb{P}$-a.s.
Letting $K\to\infty$, by dominated convergence and (\ref{etamn}), it holds
$$
\Gamma_t^{1, n, m} \leq \mathbb E_t[D_T^m\eta]+\sum_{i=2}^3 J_t^{n, m, i}\le \sum_{i=1}^3 J_t^{n, m, i}, \ \mathbb{P}-a.s.
$$
It then follows that
$$
Y_t^n-\theta Y_t^m \leq \frac{1-\theta}{\lambda} e^{-\tilde{\beta}\left\|A_T\right\|_{\infty}-\tilde{A}_t^{n, m}} \ln \left(\sum_{i=1}^3 J_t^{n, m, i}\right) \leq \frac{1-\theta}{\lambda} \ln \left(\sum_{i=1}^3 J_t^{n, m, i}\right), \quad \mathbb{P} \text {-a.s. }
$$
which implies
$$
Y_t^n-Y_t^m \leq(1-\theta)\left|Y_t^m\right|+\frac{1-\theta}{\lambda} \ln \left(\sum_{i=1}^3 J_t^{n, m, i}\right) \leq(1-\theta)\left(\left|Y_t^m\right|+\left|Y_t^n\right|\right)+\frac{1-\theta}{\lambda} \ln \left(\sum_{i=1}^3 J_t^{n, m, i}\right), \quad \mathbb{P} \text {-a.s. }
$$
Exchanging the role of $Y^m$ and $Y^n$, we deduce the other side of the inequality.
Hence,
\begin{equation}
\label{ess05}
|Y_t^m-Y_t^n| \leq(1-\theta)(\left|Y_t^m\right|+\left|Y_t^n\right|)+\frac{1-\theta}{\lambda} \ln \left(\sum_{i=1}^3 J_t^{m,n, i}\right), \quad \mathbb P \text {-a.s. }
\end{equation}

\textbf{Step 4: Construction of candidate solution $\tilde Y^0$ in appropriate space.}

For any $N>C_0$, (\ref{ess05}) and Jensen's inequality imply that
\begin{equation}
\label{y s1 cvg}
\begin{aligned}
& \sup _{m, n \geq N} \mathbb{E}\left[\sup _{t \in[0, T]}\left|Y_t^n-Y_t^m\right|\right] \leq \sup _{m, n \geq N} \mathbb E\left[(1-\theta)\left(Y_*^m+Y_*^n\right)\right]+\sup _{m, n \geq N} \mathbb{E}\left[\frac{1-\theta}{\lambda} \ln \left(\sum_{i=1}^3 J_*^{n, m, i}\right)\right] \\
& \leq \sup _{m, n \geq N} \mathbb E\left[(1-\theta)\left(Y_*^m+Y_*^n\right)\right]+\frac{1-\theta}{\lambda} \sup _{m, n \geq N} \ln \left(\sum_{i=1}^3 \mathbb{E}\left[J_*^{n, m, i}\right]\right) \\
& \leq \sup _{m, n>C_0} \frac{1-\theta}{\lambda} \mathbb{E}\left[e^{\lambda Y_*^m}+e^{\lambda Y_*^n}\right]+\frac{1-\theta}{\lambda} \ln \left(2\sup _{m, n>C_0} \left(\mathbb{E}\left[\left(D_T^m+D_T^n\right)^2\exp\{2\lambda e^{2\tilde\beta \|A_T\|_\infty}|\xi|\}\right]\right)^{1/2}\right. \\
&\quad \quad+2\sup _{m, n>C_0}\left( \mathbb{E}\left[\zeta_\theta^2 \exp \left\{2\tilde{\beta}\left\|A_T\right\|_{\infty}\right\}\left(D_T^m+D_T^n\right)^2 \exp \left\{2\lambda e^{2 \tilde{\beta}\left\|A_T\right\|_{\infty}}\left(Y_*^m+Y_*^n\right)\right\}\left(K_T^m+K_T^n\right)^2\right]\right)^{1/2}\\
&\quad\quad +\left.2\sup _{m, n \geq N} \left(\mathbb{E}\left[\zeta_\theta^2 e^{2\tilde{\beta}\left\|A_T\right\|_{\infty}} (D_T^n+D_T^m)^2 \Upsilon_{n,m}^2\left( \int_0^T\left|\Delta_{n, m} f(s)\right| d A_s\right)^2\right]\right)^{1/2}\right)\\
& \leq \sup _{m, n>C_0} \frac{1-\theta}{\lambda} \mathbb{E}\left[e^{\lambda Y_*^m}+e^{\lambda Y_*^n}\right]+\frac{1-\theta}{\lambda} \ln \left(2\sup _{m, n>C_0} \left(\mathbb{E}\left[\left(D_T^m+D_T^n\right)^2\exp\{2\lambda e^{2\tilde\beta \|A_T\|_\infty}|\xi|\}\right]\right)^{1/2}\right. \\
&\quad \quad+2\sup _{m, n>C_0}\left( \mathbb{E}\left[\zeta_\theta^2 \exp \left\{2\tilde{\beta}\left\|A_T\right\|_{\infty}\right\}\left(D_T^m+D_T^n\right)^2 \exp \left\{2\lambda e^{2 \tilde{\beta}\left\|A_T\right\|_{\infty}}\left(Y_*^m+Y_*^n\right)\right\}\left(K_T^m+K_T^n\right)^2\right]\right)^{1/2}\\
&\quad\quad +\left.2\sup _{m, n>C_0} \left(\mathbb{E}\left[\zeta_\theta^4 e^{4\tilde{\beta}\left\|A_T\right\|_{\infty}} (D_T^n+D_T^m)^4 \Upsilon_{n,m}^4\right]\right)^{1/4}\sup_{m,n\ge N}\left(\mathbb E\left[\left( \int_0^T\left|\Delta_{n, m} f(s)\right| d A_s\right)^4\right]\right)^{1/4}\right),\\
\end{aligned}
\end{equation}
where we make use of Doob's maximum inequality in the second last inequality.

To prove the $S^1$ convergence of the sequence $Y^n$, we first deal with the integral term $ \int_0^T\left|\Delta_{n,m} f(s)\right| d A_s$. Consider the sequence of stopping times $\left(T_K\right)_{K \geq 0}$ defined by:
\begin{equation}
\label{def-TK}
T_K=\inf \left\{t \geq 0, \mathbb{E}\left[\exp \left(e^ {3\beta 
A_T}\lambda\left(\left|\xi\right|\vee L_*^+\right)+\int_0^T \lambda e^{3\beta A_s}3\alpha_s d A_s\right) \mid \mathcal{F}_t\right]>e^{\lambda K}\right\}.
\end{equation}
The sequence $\left(T_K\right)_{K \geq 0}$ converges to infinity when $K$ goes to infinity. Moreover,  from (\ref{|y| bound}), we find that the sequence $\{Y_{\cdot \wedge T_K}^{n}\}_{n\ge 1}$ is uniformly bounded, in precise, $\sup_n\|Y^{n}_{\cdot\wedge T_K}\|_{S^{\infty}}\le K$. Then $\|Y^0_{t\wedge T_K}\|_\infty\le K$. Let  $U_t^{n, K}=U^{n} 1_{t<T_K}$.  Then, in view of \cite[Corollary 1]{Morlais_2009}, $\mathbb P-a.s.$, $|U_t^{n,K}|\le 2K$ and $|U_t^{0,K}|\le 2K$. Then, for $t\le T_K$, on $\{|y|\le K,\|u\|_t\le 2K\}$ $|f(t,y,u)|\le 3\alpha_t+3\beta|y|+\frac{2}{\lambda}e^{2\lambda K }.$ Then, inspired by  \cite[Lemma 1]{Lepeltier_1997}, on $\{(\omega,t):\ dA_t(\omega)>0\}\cap\{t< T_K\}$, $\lim_{n\to\infty}f^n(t,Y_t^n,U^n_t)=f(t,Y^0_t,U^0_t)$. In the same manner, it can also be proven that $\lim_{N\to\infty}\sup_{m,n>N}|f^n(t,Y_t^m,U^m_t)-f(t,Y^0_t,U^0_t)|=0$.
Note that
$$
\Delta_{n,m}f(t)=\left|f^n\left(t, \omega, Y_t^m, U_t^m\right)-f^m\left(t, \omega, Y_t^m, U_t^m\right)\right|+ \left|f^m\left(t, \omega, Y_t^n, U_t^n\right)-f^n\left(t, \omega, Y_t^n, U_t^n\right)\right|.
$$
It then obvious that 
$$
\lim _{N \rightarrow \infty}\sup_{m,n>N}\Delta_{n,m}f(t,\omega)=0.
$$
Hence, with the uniform integrablity conditions on $Y^n$, $Y^m$ $U^n$, $U^m$, and (\ref{delta f}), making use of dominated convergence theorem, we deduce that for each $p\ge 1$,
\begin{equation}
\label{cvg_deltafmn}
\lim_{N\to\infty}\sup_{m,n\ge N}\mathbb E\left[\left(\int_0^{T\wedge T_K}\left|\Delta_{n,m}f(t)\right|dA_t\right)^p\right]=0.
\end{equation}
As a consequence, letting $K$ goes to infinity, by monotone convergence theorem, it turns out that
$$
\lim_{N\to\infty}\sup_{m,n\ge N}\mathbb E\left[\left(\int_0^{T}\left|\Delta_{n,m}f(t)\right|dA_t\right)^p\right]=0,\ \forall p\ge 1.
$$
Thus, first letting $N\rightarrow \infty$ in (\ref{y s1 cvg}) and then letting $\theta\rightarrow 1$ yield that $\lim_{N\to\infty}\sup _{m, n \geq N} \mathbb{E}\left[\sup _{t \in[0, T]}\left|Y_t^n-Y_t^m\right|\right]=0$, which implies that the sequence $Y^n$ converges in $S^1$.
 Then, up to a subsequence, there exists a process $\tilde Y^0$ such that  $\lim_{n\to\infty}\sup_{t\in[0,T]}|Y^n_t-\tilde Y^0_t|=0$. Then, $\tilde Y^0$ is c\`adl\`ag. With the help of (\ref{sub mart |yn|}) and Doob's maximum inequality, $\tilde Y^0\in\mathcal E$.

Fix $p \in[1, \infty)$. Since $\mathbb E\left[\exp \left\{2 p \lambda \cdot \sup _{t \in[0, T]}\left|Y_t^n-\tilde Y_t^0\right|\right\}\right] \leq \frac{1}{2} E\left[e^{4 p \lambda Y_*^n}+e^{4 p \lambda  \tilde Y_*^0}\right] \leq {\Xi}(4 p\lambda, 3\alpha, 3 \beta)$ holds for any $n >C_0$, it turns out that $\left\{\exp \left\{p \lambda \cdot \sup _{t \in[0, T]}\left|Y_t^n-\tilde Y_t^0\right|\right\}\right\}_{n >C_0}$ is a uniformly integrable sequence in $\mathbb{L}^1\left(\mathcal{G}_T\right)$. Then it follows that $\lim _{n \rightarrow \infty} \mathbb E\left[\exp \left\{p \lambda \cdot \sup _{t \in[0, T]}\left|Y_t^n-\tilde Y_t^0\right|\right\}\right]=1$, which in particular implies that
\begin{equation}
\label{cvg_Y}
\lim _{n \rightarrow \infty}\mathbb E\left[\sup _{t \in[0, T]}\left|Y_t^n-\tilde Y_t^0\right|^q\right]=0, \quad \forall q \in[1, \infty).
\end{equation}

\textbf{Step 5:  Find candidate solution $K^0$ and  verify the solution $(\tilde Y^0,U^0, K^0)$. } 

We have constructed a candidate solution $(\tilde Y^0,\ U^0)\in \mathcal E\times H_{\nu}^{2,p}$. We are going to find a candidate solution $K^0$ for the third component. 
Consider the sequence of stopping times $\left(T_K\right)_{K \geq 0}$ defined as in (\ref{def-TK}). For $t \leq T_K\wedge T$, we obtain exactly as step 4, on $\{(t,\omega), dA_t(\omega)\not=0\}\cap\{t<T_K\}$,
$$
\lim _{n \rightarrow \infty} f^n\left(t, \omega, Y_t^n, U_t^n\right)=f\left(t, \omega, \tilde Y_t^0, U_t^0\right).
$$
Thus, by dominated convergence theorem,
$$
\begin{aligned}
&\mathbb{E}\left[\int_0^{T_K\wedge T}\left|f^{n}\left(t, Y_t^{n},U_t^{n}\right)-f\left(t, \tilde Y^0_t, U^0_t\right)\right| d A_t\right]\\
&=\mathbb{E}\left[\int_0^{T_K\wedge T}\left|f^{n}\left(t, Y_t^{n},U_t^{n,K}\right)-f\left(t, \tilde Y^{0}_t, U^{0,K}_t\right)\right| d A_t\right],
\end{aligned}
$$
which goes to zero when $n$ goes to infinity. 
As a consequence, first letting $n$ go to infinity, then pushing $K$ to infinity, it turns out that
\begin{equation}
\label{cvg_f}
\lim_{n\to\infty }\mathbb{E}\left[\int_0^{T}\left|f^{n}\left(t, Y_t^{n}, U_t^{n}\right)-f\left(t, \tilde Y^0_t,U^0_t\right)\right| d A_t\right]=0.
\end{equation}

For any $n, m >C_0$, it holds $\mathbb{P}$-a.s. that
$$
K_t^n-K_t^m=Y_0^n-Y_0^m-\left(Y_t^n-Y_t^m\right)-\int_0^t\left(f^n\left(s, Y_s^n, U_s^n\right)-f^m\left(s, Y_s^m, U_s^m\right)\right) d A_s+\int_0^t \int_E\left(U_s^n-U_s^m\right) q(d s d e), \quad t \in[0, T] .
$$
The Burkholder-Davis-Gundy inequality then implies that there exists a universal constant $c>0$ such that
$$
\begin{aligned}
& E\left[\sup _{t \in[0, T]}\left|K_t^n-K_t^m\right|^2\right] \leq c E\left[\sup _{t \in[0, T]}\left|Y_t^n-Y_t^m\right|^2\right]+c E\left[\left(\int_0^T\left|f^n\left(s, Y_s^n, U_s^n\right)-f^m\left(s, Y_s^m, U_s^m\right)\right|^2 d A_s\right)\right] \\
& +c E\left[\left(\int_0^T \int_E\left|U_s^n-U_s^m\right|^2 \phi_s(e) d A_s\right)\right] \\
& \leq c E\left[\sup _{t \in[0, T]}\left|Y_t^n-Y_t^m\right|^2\right]+c E\left[\left(\int_0^T\left|f^n\left(s, Y_s^n, U_s^n\right)-f\left(s, Y_s^0, U_s^0\right)\right|^2 d A_s\right)\right] \\
& +c E\left[\left(\int_0^T\left|f^m\left(s, Y_s^m, U_s^m\right)-f\left(s, Y_s^0, U_s^0\right)\right|^2 d A_s\right)\right]+c E\left[\left(\int_0^T \int_E\left|U_s^n-U_s^m\right|^2 \phi_s(e) d A_s\right)\right].
\end{aligned}
$$
Together with (\ref{cvg_Y}), (\ref{cvg_f}) and (\ref{estimate-U}), we deduce that:
$$
\lim _{N\rightarrow \infty} \sup _{m, n \geq N} E\left[\sup _{t \in[0, T]}\left|K_t^n-K_t^m\right|^2\right]=0 .
$$
Hence there is a $K^0$ in $\mathbb{K}^2$ such that
\begin{equation}
\label{cvg_K}
\lim _{n \rightarrow \infty} E\left[\sup _{t \in[0, T]}\left|K_t^n-K_t^0\right|^2\right]=0 .
\end{equation}
By the a priori estimate (\ref{un kn a priori estimate}) on $\left\{K^n\right\}$, $K_T^0 \in L^p$, for each $p \geq 1$. Thus $K^0 \in \mathbb{K}^p$, for each $p \geq 1$.

Finally, we are left to verify that $(\tilde Y^0,U^0, K^0)$ is a solution.  With the help of (\ref{cvg_Y}), (\ref{cvg_f}), (\ref{estimate-U}), (\ref{cvg_K}) and Burkholder-Davis-Gundy inequality, it turns out that
\begin{equation}
\label{verify_solution}
\begin{aligned}
&\mathbb E\left[\left|\tilde Y^0_t-(\xi+\int_t^T f\left(s,\tilde Y^0_s, U^0_s\right) d A_s -\int_t^T \int_E U^0_s(e) q(d s, d e))\right|\right]\\
&\quad\le\lim_{n\to\infty} \mathbb E\left[\left|\tilde Y^0_t-Y^n_t\right|\right]+\lim_{n\to\infty}\mathbb E\left[\left|\left(\int_t^T f\left(s,\tilde Y^0_s, U^0_s\right) d A_s-\int_t^T \int_E U^0_s(e) q(d s, d e)+K_T^0-K_t^0\right)\right.\right.\\
&\quad\quad\left.\left.-\left(\int_t^T f^n\left(s,Y^n_s, U^n_s\right) d A_s -\int_t^T \int_E U^n_s(e) q(d s, d e)+K_T^n-K_t^n\right)\right|\right]\\
&\quad= 0.
\end{aligned}
\end{equation}

Then, we find a solution $(\tilde Y^0, U^0, K^0)\in \mathcal E\times H_{\nu}^{2,p}\times \mathbb K^p$ to RBSDE($\xi, f, L$) with bounded $(\xi, L)$, for each $p\ge 1$, and with the help of assumption (H3), Corollary \ref{sub mart |y|} and Proposition \ref{priori estimate on U and K}, the  a priori estimates (\ref{|y| bound}) and (\ref{a pri eq on U,K}) hold for $(Y^0, U^0, K^0)$.

The proof is end.

\end{proof}

\section{Existence of convex / concave quadratic exponential RBSDEs with unbounded terminal and obstacle}
\label{section existence for unbdd}

Now we turn to prove the existence for RBSDE(\ref{reflected BSDE}) with unbounded terminal and obstacle.  The main theorem reads as follows.
\begin{thm}
\label{thm Quadratic unbounded}
Assume that assumptions (H1') and (H2)-(H5) are fulfilled.  Then the RBSDE (\ref{reflected BSDE}) admits  a unique  solution $(Y, U, K) \in \mathcal E\times H^{2,p}\times \mathbb K^p$, for all $p\ge 1$.
\end{thm}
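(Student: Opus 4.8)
The plan is to establish Theorem \ref{thm Quadratic unbounded} by a double truncation procedure on the data $(\xi, L)$, reducing to the bounded case already handled in Theorem \ref{thm Quadratic bounded}. Uniqueness is immediate from Corollary \ref{uniqueness}, so only existence needs work. First I would set, for $m, n \in \mathbb{N}$, $\xi^{n,m} := (\xi \vee (-m)) \wedge n$ and $L^{n,m}_t := (L_t \vee (-m)) \wedge n$, noting that (H2) is preserved since $L^{n,m}_T \le \xi^{n,m}$ whenever $m, n$ are taken along a common index (one can simply set $m=n$ and work with a single parameter to avoid bookkeeping). For fixed $m$, since $\xi^{n,m}$ and $L^{n,m}$ are bounded and (H1'), (H2)--(H5) hold, Theorem \ref{thm Quadratic bounded} gives a unique solution $(Y^{n,m}, U^{n,m}, K^{n,m}) \in \mathcal{E} \times H^{2,p}_\nu \times \mathbb{K}^p$ for each $p \ge 1$, and crucially the a priori estimates (\ref{|y| bound}) and (\ref{a pri eq on U,K}) hold with the \emph{same} constants: by Corollary \ref{sub mart |y|}, $\exp\{p\lambda|Y^{n,m}_t|\} \le \mathbb{E}_t[\exp\{p\lambda e^{\beta A_T}(|\xi|\vee L^+_*) + p\lambda\int_t^T e^{\beta A_s}\alpha_s\,dA_s\}]$, which is uniformly integrable in $(n,m)$ by (H4)(d), and likewise (\ref{a pri eq on U,K}) gives uniform bounds on $U^{n,m}$ and $K^{n,m}$ in every $H^{2,p}_\nu$ and $L^p$.

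Next I would run two monotone passages to the limit, exactly mirroring Steps 2--5 of the proof of Theorem \ref{thm Quadratic bounded}. Fixing $m$ and using the comparison theorem \ref{comparison thm}, $\{Y^{n,m}\}_n$ is nondecreasing in $n$ (since $\xi^{n,m}, L^{n,m}$ increase in $n$); define $Y^m_t := \lim_n Y^{n,m}_t$. Dini's theorem upgrades this to uniform convergence since the limit is c\`adl\`ag, and the uniform estimate places $Y^m \in \mathcal{E}$; then the Itô-formula argument on $|Y^{n,m} - Y^{n',m}|^2$ together with the uniform $U, K$ bounds (as in Step 2) shows $\{U^{n,m}\}_n$ is Cauchy in $H^{2,2}_\nu$ with limit $U^m \in H^{2,p}_\nu$, and $\{K^{n,m}\}_n$ is Cauchy in $\mathbb{K}^2$ with limit $K^m$. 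Convergence of the generator terms $\int_0^{\cdot} f^{\,}(s, Y^{n,m}_s, U^{n,m}_s)\,dA_s$ is handled via the localizing stopping times $T_K$ of (\ref{def-TK}) and \cite[Corollary 1]{Morlais_2009} to get local boundedness of $U^{n,m}$, plus dominated convergence; this identifies $(Y^m, U^m, K^m)$ as the solution of RBSDE$(\xi^m, f, L^m)$ with $\xi^m := \xi \vee (-m) \wedge m$, $L^m := L \vee (-m) \wedge m$ (choosing the diagonal), and the uniform estimates pass to this limit by Fatou. Then a second passage $m \to \infty$: by comparison $\{Y^m\}_m$ is monotone (nonincreasing in the lower-truncation direction once the upper bound stabilizes, or one treats upper/lower truncations separately), the limit $Y^0 := \lim_m Y^m$ lies in $\mathcal{E}$ by the uniform estimate, $\{U^m\}_m$ is Cauchy in $H^{2,2}_\nu$ with limit $U^0$, $\{K^m\}_m$ is Cauchy with limit $K^0 \in \mathbb{K}^p$, and a final verification (Itô/BDG estimate as in (\ref{verify_solution})) shows $(Y^0, U^0, K^0)$ solves RBSDE$(\xi, f, L)$ with $(Y^0, U^0, K^0) \in \mathcal{E} \times H^{2,p}_\nu \times \mathbb{K}^p$ for all $p \ge 1$.

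The main obstacle will be organizing the monotonicity cleanly. Truncating $\xi$ and $L$ both from above and below does not give a single monotone sequence, so one must either (i) reduce to a one-sided truncation by first truncating from below by $-m$ (so the data increase in $m$ on one direction), exploiting that $Y$ is controlled from above by the solution of the associated BSDE with $L = -\infty$ (Remark \ref{bsde compare}) — this is the device used implicitly in the bounded-case proof via the auxiliary BSDE in Corollary \ref{sub mart |y|} — or (ii) truncate only from above for the terminal and use the nonlinear Snell representation of Lemma \ref{represetation Y} to keep $Y^m$ monotone. I would follow (i): truncating only from below, $\xi \vee (-m)$ and $L \vee (-m)$ are still unbounded from above, so Theorem \ref{thm Quadratic bounded} does not directly apply; hence one genuinely needs the \emph{bounded-on-both-sides} truncation for the inner step and the monotone limit only in the remaining unbounded direction. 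The careful point is that the uniform a priori estimates (\ref{|y| bound}) and (\ref{a pri eq on U,K}) are stated in terms of $|\xi| \vee L^+_*$ and $e^{cY_*}$, and (H4)(d) is exactly the integrability needed to make all these bounds uniform and the dominated-convergence arguments legitimate; so once the monotonicity is arranged, every remaining step is a transcription of the bounded-case proof with "$n > C_0$, RBSDE$(\xi, f^n, L)$" replaced by "RBSDE$(\xi^{n,m}, f, L^{n,m})$", and no new estimate is required.
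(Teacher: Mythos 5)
Your proposal has a genuine gap at its core. The paper does \emph{not} prove Theorem \ref{thm Quadratic unbounded} by a monotone scheme: it uses the single symmetric truncation $\xi^n=(\xi\wedge n)\vee(-n)$, $L^n=(L\wedge n)\vee(-n)$ (under which $Y^n$ is \emph{not} monotone in $n$) and establishes that $\{Y^n\}$ is Cauchy in $S^1$ via a $\theta$-method stability estimate of the form $|Y^n_t-Y^m_t|\le (1-\theta)(|Y^n_t|+|Y^m_t|)+\frac{1-\theta}{\lambda}\ln(\sum_i I^{n,m,i}_t)$. The genuinely new ingredient there — which you assert is not needed ("no new estimate is required") — is the treatment of the \emph{differing obstacles} inside the flat-off term: $\int_0^T\mathbf{1}_{\{Y^n_{s^-}=L^n_s\}}\Gamma^{n,m}_s\,dK^n_s$ is split according to whether $L^n_s\le L^m_s+\varepsilon$ or not, and the exceptional part is controlled by $\Upsilon_{n,m}\bigl(\mathbf{1}_{\{L^+_*>m\wedge n\}}+\mathbf{1}_{\{L^-_*>m\wedge n\}}\bigr)K^n_T$, which vanishes in $L^p$ as $m\wedge n\to\infty$ by (H4)(d). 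Nothing in the bounded-data proof (where all approximating RBSDEs share the same obstacle $L$) supplies this step, so "transcribing" that proof does not close the argument.

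Your proposed substitute — comparison-theorem monotonicity plus Dini — does not work. Dini's theorem requires continuity of both the sequence and the limit; here $Y^{n,m}$ is only c\`adl\`ag (the solution jumps at the marks of the point process), so pointwise monotone convergence does not upgrade to uniform convergence, and without $\mathbb{E}[\sup_t|Y^{n,m}_t-Y^{n',m}_t|^2]\to 0$ you cannot run the BDG estimate that produces the Cauchy property of $\{K^{n,m}\}$ nor verify the equation and the Skorokhod condition in the limit. (Note that even in the paper's bounded-case proof the monotone pointwise limit $Y^0$ of Step 2 is only an auxiliary object used to show $\{U^n\}$ is Cauchy; the actual candidate $\tilde Y^0$ is produced by the $S^1$-Cauchy estimate of Steps 3--4.) Moreover, your double-truncation scheme needs an intermediate well-posedness statement for data bounded on one side only, which, as you yourself observe, Theorem \ref{thm Quadratic bounded} does not provide; the "diagonal" you then invoke undoes the monotonicity you were trying to preserve. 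To repair the proof you should drop the monotone/Dini route and instead redo the $\theta$-method estimate of Section \ref{section existence for bdd} with $f^n=f^m=f$ (so $\Delta_{n,m}f\equiv 0$), carrying the terminal difference through $\eta_{n,m}$ and the obstacle difference through the $\varepsilon$-splitting described above — which is exactly what the paper does.
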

The uniqueness also inherited from Corollary \ref{uniqueness}, we are left to prove existence.  We approximate the solution by the solutions of RBSDEs with bounded terminal and obstacle. Although some parts of the proof are inherited directly from the proof of Theorem \ref{thm Quadratic bounded}, we cope with the component $K$ of the solution and the barrier $L$ in a different manner. Therefore, for the sake of self-containedness, there may be slight overlap with the proof of Theorem \ref{thm Quadratic bounded} when it is necessary.  
\begin{proof}[Proof of existence of RBSDE with unbounded terminal and obstacle]
With a little abuse of notation, in view of Theorem \ref{thm Quadratic bounded}, denote the unique solution to RBSDE($\xi^n,\ f,\ L^n$) by ($Y^n,\ U^n,\ K^n$), where $\xi^n=(\xi\wedge n)\vee -n$, $L^n=(L\wedge n)\vee -n$. Then $(Y^n,U^n, K^n)\in \mathcal E\times H_{\nu}^{2,p}\times \mathbb K^p$ for each $p\ge 1$. Moreover, by Corollary \ref{sub mart |y|}, for each $n\in \mathbb N$
\begin{equation}
\label{sub mart |yn|2}
\exp \left\{p \lambda\left|Y^{n}_t\right|\right\} \leq \mathbb{E}_t\left[\exp \left\{p \lambda e^{\beta A_T}(|\xi|\vee L_*^+)+p\lambda  \int_t^T e^{\beta A_s}\alpha_s d A_s\right\}\right].
\end{equation}
We use the sequence $(Y^n,U^n, K^n)$ to approximate the solution for RBSDE($ \xi,\ f,\ L$). The proof is separated into three steps. In the first two steps,we construct a candidate solution $(Y^0,\ U^0)$, then we construct $K^0$ and  verify the solution in the last step.

\textbf{Step 1: Construction of candidate solution $Y^0$.}

As in the bounded scenario, we first find an a priori estimate of $\left|Y^n-Y^m\right|$, for $m, n \in \mathbb{N}$. For a fixed
$\theta \in(0,1)$, we first show that $\mathbb{P}$-a.s.
$$
\left|Y_t^n-Y_t^m\right| \leq(1-\theta)\left(\left|Y_t^m\right|+\left|Y_t^n\right|\right)+\frac{1-\theta}{\lambda} \ln \left(\sum_{i=1}^3 I_t^{m, n, i}\right), \quad t \in[0, T],
$$
where $\zeta_\theta:=\frac{\lambda e^{\tilde{\beta}\left\|A_T\right\|_{\infty}}}{1-\theta}$, and $I_t^{n, m, i}:=E\left[I_T^{n, m, i} \mid \mathscr{G}_t\right]$ for $i=1,2,3$ such that for a given constant $\varepsilon>0$,
$$
\begin{aligned}
& I_T^{n, m, 1}:=\left(D_T^m+D_T^n\right) \eta_{n, m} \text { with } D_t^m:=\exp \left\{\lambda e^{2 \tilde{\beta}\left\|A_T\right\|_{\infty}} \int_0^t\left(\alpha_s+(\beta+\tilde{\beta})\left|Y_s^m\right|\right) d A_s\right\}; \\
& D_t^n:=\exp \left\{\lambda e^{2 \tilde{\beta}\left\|A_T\right\|_{\infty}} \int_0^t\left(\alpha_s+(\beta+\tilde{\beta})\left|Y_s^n\right|\right) d A_s\right\}, \ t \in[0, T]; \\
& \eta_{n, m}:=\exp \left\{\zeta_\theta e^{\tilde{\beta}\left\|A_T\right\|_{\infty}}\left(\left|\xi^n-\theta \xi^m\right| \vee\left|\xi^m-\theta \xi^n\right|\right)\right\} ; \\
& I_T^{n, m, 2}:=\zeta_\theta \exp \left\{\tilde{\beta}\left\|A_T\right\|_{\infty}+\varepsilon \zeta_\theta e^{\tilde{\beta}\left\|A_T\right\|_{\infty}}\right\}\left(D_T^m+D_T^n\right) \exp \left\{\lambda e^{2 \tilde{\beta}\left\|A_T\right\|_{\infty}}\left(Y_*^m+Y_*^n\right)\right\}\left(K_T^m+K_T^n\right) ; \\
& \Upsilon_{n, m}:=\exp \left\{\zeta_\theta e^{\tilde{\beta}\left\|A_T\right\|_{\infty}}\left(Y_*^n+Y_*^m\right)\right\} ; \\
& I_T^{n, m, 3}:={\zeta_\theta} e^{\tilde{\beta}\left\|A_T\right\|_{\infty}}\left(D_T^m+D_T^n\right) \Upsilon_{n, m}\left(1_{\{L^+_*>(m\wedge n)\}}+1_{\{L_*^->(m\wedge n)\}}\right)\left(K_T^m+K_T^n\right) .
\end{aligned}
$$
Set $\tilde{Y}^{(n, m)}:=Y^n-\theta Y^m, \tilde{U}^{(n, m)}:=U^n-\theta U^m$ and define two processes
$$
a_t^{(n, m)}:=\mathbf{1}_{\left\{\tilde{Y}_t^{(n, m)} \neq 0\right\}} \frac{f\left(t, Y_t^n, U_t^n\right)-f\left(t, \theta Y_t^m, U_t^n\right)}{\tilde{Y}_t^{(n, m)}}-\tilde{\beta} \mathbf{1}_{\left\{\tilde{Y}_t^{(n, m)}=0\right\}}, \quad \tilde{A}_t^{(n, m)}:=\int_0^t a_s^{(n, m)} d A_s, \quad t \in[0, T] .
$$
Similar as section \ref{section existence for bdd}, for a given $t\in [0,T]$, consider a sequence of stopping times $\left(T^t_K\right)_{K \geq 0}$ defined by:
\begin{equation}
\label{def-TKt2}
T_K^t=\inf \left\{s>t, \mathbb{E}\left[\exp \left(e^ {\beta 
A_T}\lambda(\left|\xi\right|\vee L_*^+)+\int_0^T \lambda e^{\beta A_s}\alpha_s d A_s\right) \mid \mathcal{G}_s\right]>e^{\lambda K}\right\}.
\end{equation}
The sequence $\left(T^t_K\right)_{K \geq 0}$ converges to infinity when $K$ goes to infinity. Moreover,  from (\ref{|y| bound}), we find that the sequence $\{Y_{\cdot \wedge T_K^t}^{n}\}_{n\ge 1}$ is uniformly bounded on $(t,T]$, in precise, $\sup_n\|Y^{n}_{\cdot\wedge T_K^t}\|_{S^{\infty}(t,T]}\le K$. Let  $U_s^{n, K}=U_s^{n} 1_{s<T_K^t}$.  Then, in view of \cite[Corollary 1]{Morlais_2009}, $\mathbb P-a.s.$, $|U_s^{n,K}|\le 2K$ on $(t,T]$. 
Applying Itô's formula to the process $\Gamma_s^{n, m}:=\exp \left\{\zeta_\theta e^{\tilde{A}_s^{(n, m)}} \tilde{Y}_s^{(n, m)}\right\}$ on $s \in[t, T_K^t\wedge T]$ yields that
\begin{equation}
\label{gamma ito2}
\begin{aligned}
\Gamma_t^{n, m}= & \Gamma_{T_K^t\wedge T}^{n, m}+\int_t^{T_K^t\wedge T} \Gamma_{s^-}^{n,m}\left[\zeta_\theta e^{\tilde{A}_s^{n, m}} F_s^{n, m}-\int_E\left(e^{\zeta_\theta e^{\tilde{A}_s^{n, m}} \tilde{U}_s^{n, m}}-\zeta_\theta e^{\tilde{A}_s^{n, m}} \tilde{U}_s^{n, m}-1\right) \phi_s(d e)\right] d A_s \\
& -\int_t^{T_K^t\wedge T} \int_E\Gamma_{s^-}^{n,m}\left(e^{\zeta_\theta e^{\tilde{A}_s^{n, m}} \tilde{U}_s^{n, m}}-1\right) q(d s d e)+\zeta_\theta \int_t^{T_K^t\wedge T} \Gamma_{s^{-}} e^{\tilde{A}_s^{n, m}}\left(d K_s^{(n)}-\theta d K_s^{(m)}\right) \\
:= & \Gamma_{T_K^t\wedge T}^{n, m}+\int_t^{T_K^t\wedge T} G_s^{n, m} d A_s-\int_t^{T_K^t\wedge T} \int_E Q_s^{n, m} q(d s d e)+\zeta_\theta \int_t^{T_K^t\wedge T} \Gamma_{s^{-}}^{n, m} e^{\tilde{A}_s^{(n, m)}}\left(d K_s^{(n)}-\theta d K_s^{(m)}\right),
\end{aligned}
\end{equation}
where
\begin{equation}
\label{F and G}
\begin{gathered}
F_s^{n, m}=f\left(s, Y_s^n, U_s^n\right)-\theta f\left(s, Y_s^m, U_s^m\right)-a_s^{(n, m)} \tilde{Y}_s^{(n, m)}, \\
G_s^{n, m}=\zeta_\theta \Gamma_s^{n, m} e^{\tilde{A}_s^{(n, m)}}\left(f\left(s, Y_s^n, U_s^n\right)-\theta f\left(s, Y_s^m, U_s^m\right)-a_s^{(n, m)} \tilde{Y}_s^{(n, m)}\right)-\Gamma_s^{n, m} j_1\left(\zeta_\theta e^{\tilde{A}_s^{(n, m)}} \tilde{U}_s^{n, m}\right) .
\end{gathered}
\end{equation}
Similarly to (\ref{cvx estimate}), (H4)(b) and the convexity of $f$ in $u$ show that $d t \otimes d \mathbb{P}$-a.e.
$$
f\left(s, Y_s^m, U_s^n\right) \leq \theta f\left(s, Y_s^m, U_s^m\right)+(1-\theta)\left(\alpha_s+\beta\left|Y_s^m\right|\right)+\frac{1-\theta}{\lambda} j_\lambda\left(\frac{\tilde{U}_s^{n, m}}{1-\theta}\right),
$$
which together with (\ref{F and G}) implies that $d t \otimes d \mathbb{P}$-a.e.,
$$
\begin{aligned}
G_s^{n, m} & =\zeta_\theta \Gamma_s^{n, m} e^{\tilde{A}_s^{(n, m)}}\left(f\left(s, \theta Y_s^m, U_s^n\right)-\theta f\left(s, Y_s^m, U_s^m\right)\right)-\Gamma_s^{n, m} j_1\left(\zeta_\theta e^{\tilde{A}_s^{(n, m)}} \tilde{U}_s^{n, m}\right) \\
& \leq \lambda e^{2 \tilde{\beta}\left\|A_T\right\|_{\infty}} \Gamma_s^{n, m}\left(\alpha_s+(\beta+\tilde{\beta})\left|Y_s^m\right|\right).
\end{aligned}
$$
It follows from integration by parts that
\begin{equation}
\label{ito002}
\begin{aligned}
\Gamma_t^{n, m} \leq & D_t^m \Gamma_t^{n, m} \\
\leq & D_{T_K^t\wedge T}^m \Gamma_{T_K^t\wedge T}^{n, m}+\zeta_\theta \int_t^{T_K^t\wedge T} D_s^m \Gamma_{s^{-}}^{ n, m} e^{\tilde{A}_s^{n, m}} d K_s^n-\zeta_\theta \int_t^{T_K^t\wedge T} \int_E D_s^m \Gamma_{s^{-}}^{n, m}\left(e^{\zeta_\theta e^{\tilde{A}_s^{n, m}} \tilde{U}_s^{n, m}}-1\right) q(d s d e) \\
\leq & D_T^m\eta^{n,m}_{T_K^t\wedge T}+\zeta_\theta e^{\tilde{\beta}\left\|A_T\right\|_{\infty}} D_T^m \int_0^T \Gamma_{s^{-}}^{n, m} d K_s^n-\zeta_\theta \int_t^T \int_E D_s^m \Gamma_{s^{-}}^{n, m}\left(e^{\zeta_\theta e^{\tilde{A}_s^{n, m}} \tilde{U}_s^{n, m}}-1\right) q(d s d e), \quad t \in[0, T] .
\end{aligned}
\end{equation}
where
\begin{equation}
\label{etamnlim}
\eta^{n,m}_{T_K^t\wedge T}=\exp \left\{\zeta_\theta e^{\tilde\beta\|A_T\|}\left(|Y_{T_K^t\wedge T}^n-\theta Y_{T_K^t\wedge T}^m|\vee|Y_{T_K^t\wedge T}^m-\theta Y_{T_K^t\wedge T}^n|\right)\right\}\stackrel{K\to\infty}{\longrightarrow}\eta^{n,m},
\end{equation}
for each $n,m\in\mathbb N$.
We deal with the right-hand side of (\ref{ito002}) term by term. Firstly, the flat-off condition of $\left(Y^n, U^n, K^n\right)$ implies that
\begin{equation}
\label{gamma dk2}
\begin{aligned}
& \int_0^T \Gamma_s^{n, m} d K_s^n=\int_0^T \mathbf{1}_{\left\{Y_{s^{-}}^n=L_s^n\right\}} \Gamma_s^{n, m} d K_s^n=\int_0^T \mathbf{1}_{\left\{Y_{s^{-}}^n=L_s^n \leq L_s^m+\varepsilon\right\}} \Gamma_s^{n, m} d K_s^n+\int_0^T \mathbf{1}_{\left\{Y_{s^{-}}^n=L_s^n>L_s^m+\varepsilon\right\}} \Gamma_s^{n, m} d K_s^n \\
& \leq \int_0^T \mathbf{1}_{\left\{Y_{s^{-}}^n \leq Y_{s^{-}}^m+\varepsilon\right\}} \exp \left\{\lambda e^{2 \tilde{\beta}\left\|A_T\right\|_{\infty}}\left|Y_s^m\right|+\varepsilon \zeta_\theta e^{\tilde{\beta}\left\|A_T\right\|_{\infty}}\right\} d K_s^n+\Upsilon_{n, m} \int_0^T \mathbf{1}_{\left\{\left|L_s^n-L_s^m\right|>\varepsilon\right\}} d K_s^n \\
& \leq \exp \left\{\lambda e^{2 \tilde{\beta}\left\|A_T\right\|_{\infty}} Y_*^m+\varepsilon \zeta_\theta e^{\tilde{\beta}\left\|A_T\right\|_{\infty}}\right\} K_T^n+ \Upsilon_{n, m}\int_0^T(1_{\{\sup_{s\in[0,T]}L_s>(m\wedge n)\}}+1_{\{\inf_{s\in[0,T]}L_s<-(m\wedge n)\}}) dK_s^n\\
&\leq \exp \left\{\lambda e^{2 \tilde{\beta}\left\|A_T\right\|_{\infty}} Y_*^m+\varepsilon \zeta_\theta e^{\tilde{\beta}\left\|A_T\right\|_{\infty}}\right\} K_T^n+ \Upsilon_{n, m}(1_{\{L_*^+>(m\wedge n)\}}+1_{\{L_*^{-}>(m\wedge n)\}})K_T^n, \quad \mathbb{P} \text {-a.s. }
\end{aligned}
\end{equation}
We also note that
For each $p \in(1, \infty)$, Proposition \ref{priori estimate on U and K} and (\ref{|y| bound}) imply that
{\color{red}
\begin{equation}
\label{uniform bounded Y U 2 }
\sup _{n \in \mathbb{N}}\mathbb E\left[e^{p \lambda Y_*^{n}}+\left(\int_0^T\int_E\left|U_s^{n}\right|^2\phi_t(de) d A_s\right)^p+(K^n_T)^p\right]  \leq C_p \Xi\left(72 p \lambda(1+\beta\|A_T\|_\infty),\ \alpha,\ \beta\right),
\end{equation}
where $\Xi$ defined in Corollary \ref{sub mart |y|} denotes the uniform bound.  
Thus, it follows that
\begin{equation}
\label{ess-eta-nm2}
\mathbb E\left[(\eta^{n,m})^p\right] \leq\mathbb E\left[e^{2p \zeta_\theta e^{\tilde \beta \|A_T\|}\left|\xi\right|}\right]\le\mathbb E\left[e^{\frac{2p \lambda}{1-\theta} e^{2\tilde\beta \|A_T\|}\left|\xi\right|}\right]  \leq \Xi\left(\frac{2p\lambda}{1-\theta} e^{2\tilde\beta \|A_T\|},\ \alpha,\ \beta\right).
\end{equation}                                                 }
\begin{equation}
\label{ess002}
\begin{aligned}
\mathbb E\left[\Upsilon_{n,m}^p\right] & \leq \frac{1}{2}\mathbb E\left[e^{2 p \zeta_\theta e^{\tilde\beta \|A_T\|} Y_*^n}+e^{2 p \zeta_\theta e^{\tilde\beta \|A_T\|} Y_*^m}\right] \leq {\Xi}\left(\frac{2 p\lambda}{1-\theta} e^{2\tilde \beta \|A_T\|},\ \alpha,\ \beta\right).\\
\end{aligned}
\end{equation}  

Moreover,
$$
D_T^m \leq \exp \left\{\lambda e^{2 \tilde{\beta}\left\|A_T\right\|_{\infty}} \int_0^T \alpha_s d A_s\right\} \exp \left\{\lambda(\beta+\tilde{\beta})\left\|A_T\right\|_{\infty} e^{2 \tilde{\beta}\left\|A_T\right\|_{\infty}} Y_*^m\right\},
$$
and then $D_T^m \in \mathbb{L}^p\left(\mathscr{G}_T\right)$ with the help of (\ref{uniform bounded Y U 2 }). Thus, random variables $I_T^{n, m, i}, i=1,2,3$ are integrable by Young's inequality and (\ref{uniform bounded Y U 2 })-(\ref{ess-eta-nm2}). 
In addition, $\eta^{n,m}_{T_K^t\wedge T}\le \exp\{\frac{\lambda e^{2\tilde\beta\|A_T\|_\infty}}{1-\theta}(Y^n_*+Y^m_*)\}$,  in which the right hand side is also integrable.

Note that $\int_t^{T_{K}^t\wedge T}\int_E D^m_s \Gamma_{s^-}^{n,m}(e^{\zeta_\theta e^{\tilde A^{n,m}_s}\tilde U^{n,m}_s}-1)q(dsde)$ is a true martingale due to the boundedness of the integrand. Taking $E\left[\cdot \mid \mathscr{G}_t\right]$ in (\ref{ito002}) and in view of (\ref{gamma dk2}), it turns out that $\Gamma_t^{n, m} \leq \mathbb E_t[D_T^m\eta^{n,m}_{T^t_K\wedge T}]+\sum_{i=2}^3 I_t^{n, m, i}, \ \mathbb{P}$-a.s.
Letting $K\to\infty$, by dominated convergence and (\ref{etamnlim}), it holds
$$
\Gamma_t^{n, m} \leq \mathbb E_t[D_T^m\eta^{n,m}]+\sum_{i=2}^3 I_t^{n, m, i}\le \sum_{i=1}^3 I_t^{n, m, i}, \ \mathbb{P}-a.s.
$$
It then follows that
$$
Y_t^n-\theta Y_t^m \leq \frac{1-\theta}{\lambda} e^{-\tilde{\beta}\left\|A_T\right\|_{\infty}-\tilde{A}_t^{n, m}} \ln \left(\sum_{i=1}^3 I_t^{n, m, i}\right) \leq \frac{1-\theta}{\lambda} \ln \left(\sum_{i=1}^3 I_t^{n, m, i}\right), \quad \mathbb{P} \text {-a.s. }
$$
which implies
$$
Y_t^n-Y_t^m \leq(1-\theta)\left|Y_t^m\right|+\frac{1-\theta}{\lambda} \ln \left(\sum_{i=1}^3 I_t^{n, m, i}\right) \leq(1-\theta)\left(\left|Y_t^m\right|+\left|Y_t^n\right|\right)+\frac{1-\theta}{\lambda} \ln \left(\sum_{i=1}^3 I_t^{n, m, i}\right), \quad \mathbb{P} \text {-a.s. }
$$
Exchanging the role of $Y^m$ and $Y^n$, we deduce the other side of the inequality.
Hence,
\begin{equation}
\label{ess0502}
|Y_t^m-Y_t^n| \leq(1-\theta)(\left|Y_t^m\right|+\left|Y_t^n\right|)+\frac{1-\theta}{\lambda} \ln \left(\sum_{i=1}^3 I_t^{m,n, i}\right), \quad \mathbb P \text {-a.s. }
\end{equation}
Based on (\ref{ess0502}), we apply Doob’s martingale inequality and Jensen's inequality to obtain, for $N\in\mathbb N$,
\begin{equation}
\label{y s1 cvg2}
\begin{aligned}
& \sup _{m, n \geq N} \mathbb{E}\left[\sup _{t \in[0, T]}\left|Y_t^n-Y_t^m\right|\right] \leq \sup _{m, n \geq N} \mathbb E\left[(1-\theta)\left(Y_*^m+Y_*^n\right)\right]+\sup _{m, n \geq N} \mathbb{E}\left[\frac{1-\theta}{\lambda} \ln \left(\sum_{i=1}^3 I_*^{n, m, i}\right)\right] \\
& \leq \sup _{m, n \geq N} \mathbb E\left[(1-\theta)\left(Y_*^m+Y_*^n\right)\right]+\frac{1-\theta}{\lambda} \sup _{m, n \geq N} \ln \left(\sum_{i=1}^3 \mathbb{E}\left[I_*^{n, m, i}\right]\right) \\
& \leq \sup _{m, n} \frac{1-\theta}{\lambda} \mathbb{E}\left[e^{\lambda Y_*^m}+e^{\lambda Y_*^n}\right]+\frac{1-\theta}{\lambda} \ln \left(2\sup _{m, n\ge N} \left(\mathbb{E}\left[\left(D_T^m+D_T^n\right)^2\exp\{\frac{2\lambda}{1-\theta} e^{2\tilde\beta \|A_T\|_\infty}(|\xi^m-\theta\xi^n|\vee|\xi^n-\theta\xi^m|)\}\right]\right)^{1/2}\right. \\
&\quad \quad\left.+2\sup _{m, n}\left( \mathbb{E}\left[\zeta_\theta^2 \exp \left\{2\tilde{\beta}\left\|A_T\right\|_{\infty}+2\varepsilon \zeta_\theta e^{\tilde{\beta}\left\|A_T\right\|_{\infty}} \right\}\left(D_T^m+D_T^n\right)^2 \exp \left\{2\lambda e^{2 \tilde{\beta}\left\|A_T\right\|_{\infty}}\left(Y_*^m+Y_*^n\right)\right\}\left(K_T^m+K_T^n\right)^2\right]\right)^{1/2}\right.\\
&\quad\quad\left. +2\sup _{m, n \geq N} \left(\mathbb{E}\left[\zeta_\theta^2 e^{2\tilde{\beta}\left\|A_T\right\|_{\infty}}\left(D_T^m+D_T^n\right)^2 \Upsilon_{n, m}^2\left(1_{\{L_*^+>(m\wedge n)\}}+1_{\{L_*^{-}>(m\wedge n)\}}\right)^2\left(K_T^m+K_T^n\right)^2\right]\right)^{1/2}\right)\\
& \leq \sup _{m, n} \frac{1-\theta}{\lambda} \mathbb{E}\left[e^{\lambda Y_*^m}+e^{\lambda Y_*^n}\right]\\
&\quad\quad+\frac{1-\theta}{\lambda} \ln \left(2\sup _{m, n} \left(\mathbb{E}\left[\left(D_T^m+D_T^n\right)^4\right]\right)^{1/4}\sup _{m, n\ge N}\left(\mathbb E\left[\exp\{\frac{4\lambda}{1-\theta} e^{2\tilde\beta \|A_T\|_\infty}(|\xi^m-\theta\xi^n|\vee|\xi^n-\theta\xi^m|)\}\right]\right)^{1/4}\right. \\
&\quad \quad\left.+2\sup _{m, n}\left( \mathbb{E}\left[\zeta_\theta^2 \exp \left\{2\tilde{\beta}\left\|A_T\right\|_{\infty}+2\varepsilon \zeta_\theta e^{\tilde{\beta}\left\|A_T\right\|_{\infty}} \right\}\left(D_T^m+D_T^n\right)^2 \exp \left\{2\lambda e^{2 \tilde{\beta}\left\|A_T\right\|_{\infty}}\left(Y_*^m+Y_*^n\right)\right\}\left(K_T^m+K_T^n\right)^2\right]\right)^{1/2}\right.\\
&\quad\quad\left. +2\left(\sup _{m, n} \left(\mathbb{E}\left[\zeta_\theta^4 e^{4\tilde{\beta}\left\|A_T\right\|_{\infty}}\left(D_T^m+D_T^n\right)^4 \Upsilon_{n, m}^4\left(K_T^m+K_T^n\right)^4\right]\right)^{1/4}\right)\left(\mathbb E\left[\left(1_{\{L_*^+>N\}}+1_{\{L_*^{-}>N\}}\right)^4\right]\right)^{1/4}\right),
\end{aligned}
\end{equation}
where we make use of H\"older's inequality in the last inequality. Thus, in view of $\lim_{n,m \rightarrow \infty}\eta^{n,m}=\exp\{\lambda e^{2\tilde \beta\|A_T\|_\infty}|\xi|\}$ and dominated convergence theorem, first letting $\varepsilon\to 0$, then $N\to\infty$ and finally $\theta\to 1$, we obtain 
$$
\lim_{N\to\infty}\sup _{m, n \geq N} \mathbb{E}\left[\sup _{t \in[0, T]}\left|Y_t^n-Y_t^m\right|\right]=0,
$$
which implies that the sequence $Y^n$ converges in $S^1$.
 Then, up to a subsequence, there exists a process $Y^0$ such that 
\begin{equation}
\label{ycvgunif}
 \lim_{n\to\infty}\sup_{t\in[0,T]}|Y^n_t-Y^0_t|=0.
 \end{equation}
 Then, $Y^0$ is c\`adl\`ag. With the help of (\ref{sub mart |yn|2}) and Doob's maximum inequality, $ Y^0\in\mathcal E$.

Fix $p \in[1, \infty)$. Inherited from the proof of (\ref{cvg_Y}), since $\mathbb E\left[\exp \left\{2 p \lambda \cdot \sup _{t \in[0, T]}\left|Y_t^n- Y_t^0\right|\right\}\right] \leq \frac{1}{2} E\left[e^{4 p \lambda Y_*^n}+e^{4 p \lambda  Y_*^0}\right] \leq {\Xi}(4 p\lambda, \alpha,  \beta)$ holds for any $n\in\mathbb N$, it turns out that $\left\{\exp \left\{p \lambda \cdot \sup _{t \in[0, T]}\left|Y_t^n-\tilde Y_t^0\right|\right\}\right\}_{n \in\mathbb N}$ is a uniformly integrable sequence in $\mathbb{L}^1\left(\mathcal{G}_T\right)$. Then it follows that $\lim _{n \rightarrow \infty} \mathbb E\left[\exp \left\{p \lambda \cdot \sup _{t \in[0, T]}\left|Y_t^n- Y_t^0\right|\right\}\right]=1$, which in particular implies that
\begin{equation}
\label{cvg_Y2}
\lim _{n \rightarrow \infty}\mathbb E\left[\sup _{t \in[0, T]}\left|Y_t^n- Y_t^0\right|^q\right]=0, \quad \forall q \in[1, \infty).
\end{equation}

\textbf{Step 2: Construction of candidate solution $U^0$.}

For any $m,\ n \in \mathbb{N}$, applying Itô's formula to the process $\left|Y^n-Y^m\right|^2$, we can deduce that
$$
\begin{aligned}
& \int_0^T \int_E\left|U_s^n(e)-U_s^m(e)\right|^2 \phi_s(d e) d A_s \\
& \left|\xi^n-\xi^m\right|^2-\left|Y_0^n-Y_0^m\right|^2+2 \int_0^T\left(Y_{s^{-}}^n-Y_{s^{-}}^m\right)\left(f\left(s, Y_s^n, U_s^n\right)-f\left(s, Y_s^m, U_s^m\right)\right) d A_s \\
& +2 \int_0^T\left(Y_{s^{-}}^n-Y_{s^{-}}^m\right)\left(d K_s^n-d K_s^m\right)-2 \int_0^T \int_E\left(\left(Y_{s^{-}}^n-Y_{s^{-}}^m\right)\left(U_s^n-U_s^m\right)+\left|U_s^n-U_s^m\right|^2\right) q(d s d e) \\
\leq & 2 \sup _{t \in[0, T]}\left|Y_t^n-Y_t^m\right|\left(2 \int_0^T \alpha_t d A_t+\beta\left\|A_T\right\|_{\infty}\left(Y_*^n+Y_*^m\right)+\frac{1}{\lambda} \int_0^T\left(j_\lambda\left(U_s^m\right)+j_\lambda\left(U_s^n\right)+j_\lambda\left(-U_s^m\right)+j_\lambda\left(-U_s^n\right)\right) d A_s+K_T^n+K_T^m\right) \\
& +\sup _{t \in[0, T]}\left|Y_t^n-Y_t^m\right|^2-2 \int_0^T \int_E\left(\left(Y_{s^{-}}^n-Y_{s^{-}}^m\right)\left(U_s^n-U_s^m\right)+\left|U_s^n-U_s^m\right|^2\right) q(d s d e), \quad \mathbb{P} \text {-a.s. }
\end{aligned}
$$
The last term is a martingale due to the integrability condition of $\left\{Y^n\right\}$ and $\left\{U^n\right\}$. Then taking expectation, in
views of Hölder's inequality, (\ref{e^U}), (\ref{e^-U}) and (\ref{sub mart |yn|2}), it deduces that there exists a universal constant $c>0$ such that
$$
\begin{aligned}
&\mathbb E\left[\left(\int_0^T \int_E\left|U_s^n-U_s^m\right|^2 \phi_s(e) d A_s\right)\right] \leq\mathbb E\left[\sup _{t \in[0, T]}\left|Y_t^n-Y_t^m\right|^2\right]+ \\
& +c\left\{\mathbb E\left[\sup _{t \in[0, T]}\left|Y_t^n-Y_t^m\right|^2\right]\right\}^{\frac{1}{2}}\left\{\sup _{m \in \mathbb{N}} \mathbb E\left[e^{2 \lambda Y_*^m}+\int_0^T \int_E\left(e^{\lambda U_s^m}-1\right)^2 \phi_s(e) d A_s+\int_0^T \int_E\left(e^{-\lambda U_s^m}-1\right)^2 \phi_s(e) d A_s+\left(K_T^m\right)^2\right]\right\}^{\frac{1}{2}} \\
& \leq\mathbb E\left[\sup _{t \in[0, T]}\left|Y_t^n-Y_t^m\right|^2\right]+c\left\{\mathbb E\left[\sup _{t \in[0, T]}\left|Y_t^n-Y_t^m\right|^2\right]\right\}^{\frac{1}{2}} .
\end{aligned}
$$
Hence, it follows that
$$
\lim _{N \rightarrow \infty} \sup _{m, n \geq N} \mathbb E\left[\left(\int_0^T \int_E\left|U_s^n(e)-U_s^m(e)\right|^2 \phi_s(d e) d A_s\right)\right]=0 .
$$
Thus, $\left\{U^n\right\}$ is a Cauchy sequence in $H_{\nu}^{2,2}$, which implies that there exists a $U^0 \in H_{\nu}^{2,2}$ such that
$$
\lim _{n \rightarrow \infty} \mathbb E\left[\left(\int_0^T \int_E\left|U_s^n(e)-U_s^0(e)\right|^2 \phi_s(d e) d A_s\right)\right]=0 .
$$
Thus, up to a subsequence,
\begin{equation}
\label{cvg U2}
\lim _{n \rightarrow \infty}\left(\int_0^T \int_E\left|U_s^n(e)-U_s^0(e)\right|^2 \phi_s(d e) d A_s\right)=0,\ \mathbb{P}-\text { a.s. }
\end{equation}
Then, on the set $\left\{(t, \omega) \in[0, T] \times \Omega: d A_t(\omega) \neq 0\right\}$, it holds that
\begin{equation}
\label{ucvgphi}
\lim _{n \rightarrow \infty}\left(\int_E\left|U_t^n(e)-U_t^0(e)\right|^2 \phi_t(d e)\right)=0.
\end{equation}
Moreover, with the help of Proposition \ref{priori estimate on U and K} and Fatou's Lemma, similar as (\ref{u0}), $U^0 \in H_{\nu}^{2, p}$, for each $p\ge 1$.
Moreover, since $f$ is continuous with respect to $y$ and $u$, in view of (\ref{ycvgunif}) and (\ref{ucvgphi}), for $t \in[0,T]$, on $\{(t,\omega), dA_t(\omega)\not=0\}$,
$$
\lim _{n \rightarrow \infty} f\left(t, \omega, Y_t^n, U_t^n\right)=f\left(t, \omega,  Y_t^0, U_t^0\right).
$$
Thus, by dominated convergence theorem, for each $p\ge 1$
\begin{equation}
\label{cvgf2}
\lim_{n\to\infty}\mathbb{E}\left[\left(\int_0^{T}\left|f\left(t, Y_t^{n},U_t^{n}\right)-f\left(t, Y^0_t, U^0_t\right)\right| d A_t\right)^p\right]=0.
\end{equation}

\textbf{Step 3: Construction of candidate solution $K^0$ and verification of the solution $( Y^0,U^0, K^0)$. } 

For any $n, m \in \mathbb{N}$, it holds $\mathbb{P}$-a.s. that
$$
K_t^n-K_t^m=Y_0^n-Y_0^m-\left(Y_t^n-Y_t^m\right)-\int_0^t\left(f\left(s, Y_s^n, U_s^n\right)-f\left(s, Y_s^m, U_s^m\right)\right) d A_s+\int_0^t \int_E\left(U_s^n-U_s^m\right) q(d s d e), \quad t \in[0, T] .
$$
The Burkholder-Davis-Gundy inequality then implies that there exists a universal constant $c>0$ such that
$$
\begin{aligned}
&\mathbb E\left[\sup _{t \in[0, T]}\left|K_t^n-K_t^m\right|^2\right] \leq c\mathbb E\left[\sup _{t \in[0, T]}\left|Y_t^n-Y_t^m\right|^2\right]+c \mathbb E\left[\left(\int_0^T\left|f\left(s, Y_s^n, U_s^n\right)-f\left(s, Y_s^m, U_s^m\right)\right| d A_s\right)^2\right] \\
& +c\mathbb E\left[\left(\int_0^T \int_E\left|U_s^n-U_s^m\right|^2 \phi_s(e) d A_s\right)\right] \\
& \leq c\mathbb E\left[\sup _{t \in[0, T]}\left|Y_t^n-Y_t^m\right|^2\right]+c \mathbb E\left[\left(\int_0^T\left|f\left(s, Y_s^n, U_s^n\right)-f\left(s, Y_s^0, U_s^0\right)\right|d A_s\right)^2\right] \\
& +c\mathbb E\left[\left(\int_0^T\left|f\left(s, Y_s^m, U_s^m\right)-f\left(s, Y_s^0, U_s^0\right)\right| d A_s\right)^2\right]+c\mathbb E\left[\left(\int_0^T \int_E\left|U_s^n-U_s^m\right|^2 \phi_s(e) d A_s\right)\right].
\end{aligned}
$$
Together with (\ref{cvg_Y2}), (\ref{cvg U2}) and (\ref{cvgf2}), we deduce that:
$$
\lim _{N \rightarrow \infty} \sup _{m, n \geq N} \mathbb E\left[\sup _{t \in[0, T]}\left|K_t^n-K_t^m\right|^2\right]=0 .
$$
Hence there is a $K^0$ in $\mathbb{K}^2$ such that
\begin{equation}
\label{cvg K2}
\lim _{n \rightarrow \infty}\mathbb E\left[\sup _{t \in[0, T]}\left|K_t^n-K_t^0\right|^2\right]=0 .
\end{equation}
By the a priori estimate (\ref{a pri eq on U,K}) on $\left\{K^n\right\}$, $K_T^0 \in L^p$, for each $p \geq 1$. Thus $K^0 \in \mathbb{K}^p$, for each $p \geq 1$.

Finally, we are left to verify that $\left(Y^0, U^0, K^0\right)$ is a solution. First, $Y^n \geq L^n$ and the Skorohod condition are obviously hold by putting a pointwise limit as $n \rightarrow \infty$. Moreover, with the help of (\ref{cvg_Y2}), (\ref{cvg U2}), (\ref{cvgf2}), (\ref{cvg K2}) and Burkholder-Davis-Gundy inequality, it turns out that
$$
\begin{aligned}
& \mathbb{E}\left[\left|Y_t^0-\left(\xi+\int_t^T f\left(s, Y_s^0, U_s^0\right) d A_s+\int_t^T d K_s^0-\int_t^T \int_E U_s^0(e) q(d s, d e)\right)\right|^2\right] \\
& \leq \lim _{n \rightarrow \infty} 2 \mathbb{E}\left[\left|Y_t^0-Y_t^n\right|^2\right] \\
& \quad+\lim _{n \rightarrow \infty} 2 \mathbb{E}\left[\mid\left(\xi+\int_t^T f\left(s, Y_s^0, U_s^0\right) d A_s+\int_t^T d K_s^0-\int_t^T \int_E U_s^0(e) q(d s, d e)\right)\right. \\
& \left.\quad-\left.\left(\xi^n+\int_t^T f\left(s, Y_s^n, U_s^n\right) d A_s+\int_t^T d K_s^n-\int_t^T \int_E U_s^n(e) q(d s, d e)\right)\right|^2\right] \\
& =0 .
\end{aligned}
$$
The proof is complete.

\end{proof}

\begin{remark}
Under bounded terminal and bounded obstacle assumptions,  the existence of the quadratic exponential RBSDE holds well without the assumption of convexity, see Matoussi\cite{matoussi2020generalized}. The authors make use of a different way to approximate the solution from the solutions of a collection of  Lipschitz RBSDEs. However, no uniqueness result is provided in \cite{matoussi2020generalized}. With the help of $\theta$-method, we work out the uniqueness under the additional convexity assumption on the driver $f$. 
\end{remark}

\section{Application}
\label{section appl}
In the section, we present an application of quadratic exponential RBSDE's to the pricing of American options via utility maximization.
 As in \cite{Morlais_2009}, assume that the interest rate is zero in  the financial market. The portfolio  consists of one risk-free asset and one single risky asset, whose price process is denoted by $S$, satisfying
$$
\mathrm{d} S_s=S_{s-}\left(b_s \mathrm{~d} s+\sigma_s \mathrm{~d} W_s+\int_{\mathbb{R} \backslash\{0\}} \beta_s(x) \tilde{N}_p(\mathrm{~d} s, \mathrm{~d} x)\right),
$$
where  $W=\left(W_t\right)_{t \in[0, T]}$ is a standard one dimensional Brownian motion and $N_p(\mathrm{~d} s, \mathrm{~d} x)$ is  the associated counting measure to  a  Poisson type point process $p$  defined on $[0, T] \times \mathbb{R} \backslash\{0\}$ and independent with $W$. The compensator of $N_p$ is
$$
\hat{N}_p(\mathrm{~d} s, \mathrm{~d} x)=n(\mathrm{~d} x) \mathrm{d} s
$$
and the  measure $n(\mathrm{~d} x)$ (also denoted by $n$ ) is, without loss of generality assumed to be a probability measure with $n(\{0\})=0$.

All processes $b, \sigma$ and $\beta$ are assumed to be bounded and predictable and $\beta$ satisfies: $\beta>-1$. This condition ensures that the price process $S$ is positive, which is reasonable. The  market price of risk process is defined as $\theta_s=\sigma_s^{-1}b_s$, which is obviously bounded.

Next, we list notions of trading strategies and self financing portfolio. Assume that all trading strategies  taking values in a compact set denoted by $\mathcal{C}$.  

\begin{definition}[\cite{Morlais_2009}]
A predictable $\mathbb{R}$-valued process $\pi$ is a self-financing trading strategy, if it takes its values in a constraint set $\mathcal{C}$ and if the process $X^{\pi, t, x}$ such that
$$
\forall s \in[t, T], \quad X_s^{\pi, t, x}:=x+\int_t^s \pi_s \frac{\mathrm{d} S_s}{S_{s-}}.
$$
is in the space $\mathcal{H}^2$ of semimartingales (see chapter 4, \cite{Protter_2005}). Such a process $X^\pi:=X^{\pi, t, x}$ stands for the wealth of an agent having strategy $\pi$ and wealth $x$ at time $t$.
\end{definition}

In addition, as \cite[Lemma 1]{Morlais_2009}, under the assumption of compactness of the constraint set $\mathcal{C}$, all constrained strategies satisfy: for each $\tilde\alpha>0$, the collection
$\left\{\exp \left(-\alpha X_\tau^\pi\right), \tau: \mathcal{G}\right.$-stopping time $\}$ is uniformly integrable.

We are at the position to characterize dynamically the value process associated to the exponential utility maximization problem.  In the sequel, we denote by $U_\alpha$ the exponential utility function with parameter $\alpha$, which is defined on $\mathbb{R}$ by: $U_\alpha(\cdot)=-\exp (-\alpha \cdot)$.  The price of an European option with terminal payoff $B$ is deduced from the value function as in \cite{Rouge_2000}.

 The utility maximization problem consists in maximizing the expected value of the exponential utility of the portfolio. More precisely, the value process $V^B$ is defined as:
\begin{equation}
\label{value}
V_t^B(x)=\sup _{\pi \in A_t} \mathbb{E}\left(U_{\tilde\alpha}\left(x+\int_t^T \pi_s \frac{\mathrm{d} S_s}{S_{s-}}-B\right) \mid \mathcal{G}_t\right) .
\end{equation}
Here, $A_t$ denotes the set of admissible strategies starting the wealth process from $t$.  $B$ stands for the contingent claim, which is assumed to be a bounded $\mathcal{G}_t$-measurable random variable and $x$ is a constant standing for the wealth at time $t$.

The value function $V^B$ is associated with a quadratic exponential BSDE stated as follows:
\begin{thm}[\cite{Morlais_2009}]
\label{thm7.2}
For any constant $x$, the expression of $V_t^B(x)$, as defined in (\ref{value}), is
$$
V_t^B(x)=-\exp \left(-\tilde\alpha\left(x-Y_t\right)\right)
$$
where $Y_t$ is the first component of the solution $(Y, Z, U)$ of the BSDE:
$$
Y_t=B+\int_t^T f_s\left( Z_s, U_s\right) d s -\int_t^T Z_s d W_s-\int_t^T \int_{\mathbb{R} \backslash\{0\}} U_s(x) \tilde N_p(d s, d x), \quad 0 \leq t \leq T, \quad \mathbb{P} \text {-a.s. },
$$
and whose generator $f$ is
$$
f(s, z, u)=\inf _{\pi \in \mathcal{C}}\left(\frac{\tilde\alpha}{2}\left|\pi \sigma_s-\left(z+\frac{\theta}{\tilde\alpha}\right)\right|^2+\frac{1}{\tilde\alpha}j_{\tilde\alpha}(s,u-\pi \beta_s)\right)-\theta z-\frac{|\theta|^2}{2 {\tilde\alpha}} .
$$
Moreover, there exists an optimal strategy $\pi^*$ such that: $\pi^* \in A_t$, and satisfying
$$
\pi_s^* \in \arg \min _{\pi \in \mathcal C}\left(\frac{{\tilde\alpha}}{2}\left|\pi \sigma_s-\left(Z_s+\frac{\theta_s}{{\tilde\alpha}}\right)\right|^2+\frac{1}{{\tilde\alpha}}j_{\tilde\alpha}(s,U_s-\pi \beta_s)\right)
$$
\end{thm}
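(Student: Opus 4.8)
The plan is to prove Theorem~\ref{thm7.2} by the martingale optimality principle (dynamic programming), in the form used for jump--diffusion utility maximization by Morlais. First I would establish that the BSDE in the statement is well posed. Its driver $f(s,z,u)$ is independent of $y$ (so trivially Lipschitz in $y$), and, being an infimum over the compact set $\mathcal C$ of maps that are jointly convex in $(z,u)$, it is itself jointly convex in $(z,u)$, so (H4)(e)/(H4*)(e) holds; evaluating at $\pi=0$ gives the upper growth bound $f(s,z,u)\le \frac{\gamma}{2}|z|^2+\frac1{\tilde\alpha}j_{\tilde\alpha}(s,u)+c$ with $c$ bounded, while the boundedness of $b,\sigma,\theta,\beta$, the constraint $\beta>-1$, and the compactness of $\mathcal C$ yield a matching lower bound, so that the quadratic--exponential condition (H4*)(c) of Remark~\ref{remark_RBSDEJ} is met. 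Since $B$ is bounded, (H4*)(d) is immediate and, taking $L\equiv-\infty$, the $L=-\infty$ version of the existence and uniqueness results (Remark~\ref{bsde compare}, Theorem~\ref{thm Quadratic bounded} and Corollary~\ref{uniqueness} in the Brownian-plus-marked-point-process setting of Remark~\ref{remark_RBSDEJ}) provides a unique solution $(Y,Z,U)$ with $Z\in\mathbb H^2$, $U\in H^{2,2}_\nu$ and, because $B$ and all coefficients are bounded, $Y\in S^\infty$ (with $\|U\|_{\mathbb J^\infty}<\infty$ obtained by the truncation argument used in Theorem~\ref{thm_lip}(ii)).

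Next, for a constant $x$, a time $t\in[0,T]$, and $\pi\in A_t$, I would introduce the process $R^\pi_s:=U_{\tilde\alpha}\bigl(X^{\pi,t,x}_s-Y_s\bigr)=-\exp\{-\tilde\alpha(X^{\pi}_s-Y_s)\}$ for $s\in[t,T]$, and apply Itô's formula using the dynamics of $X^\pi$ and the BSDE for $Y$. After collecting terms, $R^\pi$ decomposes as a stochastic integral (a local martingale) plus a finite-variation part whose density is $-\tilde\alpha R^\pi_s\,\bigl(\tfrac{\tilde\alpha}{2}|\pi_s\sigma_s-(Z_s+\theta_s/\tilde\alpha)|^2+\tfrac1{\tilde\alpha}j_{\tilde\alpha}(s,U_s-\pi_s\beta_s)-f(s,Z_s,U_s)\bigr)$; by the very definition of $f$ as the infimum over $\mathcal C$, the bracket is $\ge 0$ for every admissible $\pi$ and vanishes exactly on the argmin. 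Since $R^\pi\le 0$, this shows $R^\pi$ is a local supermartingale for each $\pi\in A_t$ and a local martingale for a candidate $\pi^{*}$ selected measurably from the argmin; the selection is justified because, for fixed $(s,\omega)$, $\pi\mapsto\frac{\tilde\alpha}{2}|\pi\sigma_s-(Z_s+\theta_s/\tilde\alpha)|^2+\frac1{\tilde\alpha}j_{\tilde\alpha}(s,U_s-\pi\beta_s)$ is continuous and convex on the compact $\mathcal C$, so the argmin is a nonempty closed set admitting a predictable selection.

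Finally I would upgrade these local statements and verify admissibility of $\pi^{*}$. For any $\pi\in A_t$, the family $\{\exp(-\tilde\alpha X^{\pi}_\tau):\tau\in\mathcal S_{t,T}\}$ is uniformly integrable by the compactness of $\mathcal C$ (as recalled after the definition of admissible strategies), and $Y\in S^\infty$ controls the $Y$-term; hence $\{R^\pi_\tau:\tau\in\mathcal S_{t,T}\}$ is uniformly integrable, so $R^\pi$ is a genuine supermartingale and $\mathbb E[U_{\tilde\alpha}(X^\pi_T-B)\mid\mathcal G_t]=\mathbb E[R^\pi_T\mid\mathcal G_t]\le R^\pi_t=U_{\tilde\alpha}(x-Y_t)$; taking the essential supremum over $\pi\in A_t$ gives $V^B_t(x)\le-\exp(-\tilde\alpha(x-Y_t))$. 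Because $\pi^{*}$ is bounded ($\pi^{*}_s\in\mathcal C$) and $\sigma,\beta$ are bounded with $Z\in\mathbb H^2$, $U\in H^{2,2}_\nu$, one checks $X^{\pi^{*}}\in\mathcal H^2$ and the integrability making $\pi^{*}\in A_t$; then the same uniform integrability turns $R^{\pi^{*}}$ into a true martingale, so the inequality becomes an equality along $\pi^{*}$, yielding $V^B_t(x)=-\exp(-\tilde\alpha(x-Y_t))$ and optimality of $\pi^{*}$. The main obstacle I expect is exactly this last step: producing a clean predictable selection $\pi^{*}$ of the argmin and verifying that $R^{\pi^{*}}$, and more generally each $R^{\pi}$, is of class (D) so that the martingale optimality principle can be invoked; once these are in place, the Itô expansion and the sign of the drift are forced by the construction of the generator.
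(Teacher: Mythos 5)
Your proposal follows the martingale optimality principle exactly as in the cited source \cite{Morlais_2009}; the paper itself offers no proof of Theorem \ref{thm7.2} (it is quoted), and the remark immediately following it confirms your key object $R^\pi_s=-e^{-\tilde\alpha X^\pi_s}e^{\tilde\alpha Y_s}$ and its supermartingale/class (D) treatment via the uniform integrability of $\{\exp(-\tilde\alpha X^\pi_\tau)\}$. So the route is the intended one and the verification steps (Itô expansion, sign of the drift by definition of $f$ as an infimum, predictable selection of the argmin, upgrade from local to true (super)martingale) are all sound.

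One justification is genuinely wrong as written: you assert that $f$ is jointly convex in $(z,u)$ ``being an infimum over the compact set $\mathcal C$ of maps that are jointly convex in $(z,u)$.'' A pointwise infimum of convex functions is in general not convex (e.g.\ $\inf\{x,-x\}=-|x|$). The correct argument is that the map $(\pi,z,u)\mapsto \frac{\tilde\alpha}{2}\left|\pi\sigma_s-\left(z+\frac{\theta_s}{\tilde\alpha}\right)\right|^2+\frac{1}{\tilde\alpha}j_{\tilde\alpha}(s,u-\pi\beta_s)$ is \emph{jointly} convex in all three arguments, and partial minimization of a jointly convex function over a \emph{convex} constraint set $\mathcal C$ in the variable $\pi$ yields a convex function of $(z,u)$; this requires convexity of $\mathcal C$ in addition to its compactness. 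This matters for your argument because you route well-posedness of the BSDE through the paper's existence and uniqueness results, which hinge on assumption (H4)(e)/(H4*)(e); with the corrected convexity argument (and $\mathcal C$ convex, as the paper implicitly assumes when it declares $f$ ``obviously jointly convex''), the rest of your proof goes through unchanged.
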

\begin{remark}
Theorem \ref{thm7.2} holds well under the assumption that $\mathbb E[\exp\{p|B|\}]<\infty$ for each $p\ge 1$ instead of bounded terminal $B$. Note that since
$$
R_s^\pi:=-\mathrm{e}^{-{\tilde\alpha} X_s^\pi} \mathrm{e}^{{\tilde\alpha} Y_s},
$$
with the help of both the uniform integrability of $\left(\mathrm{e}^{-{\tilde\alpha} X_\tau^\pi}\right)$ for each ${\tilde\alpha}>0$, where $\tau$ runs over the set of all stopping times and $Y\in \mathcal E$, the supermartingale property of $R^\pi$ holds well.
\end{remark}

As in \cite{Rouge_2000}, the price of contingent claim $B$ defined via utility function reads:
$$
pr_t(B)=\inf\{y\in\mathbb R,V_t^B(y)\ge V_t^0(0)=-1\}=Y_t:=Y_t(T,B).
$$

Next, consider the valuation of an American option with early exercise payoff $\left\{\xi_t, 0 \leqslant t \leqslant T\right\}$ which satisfies the assumptions of the obstacle and $\xi_T=B$. Precisely speaking, the holder has the right to exercise the option at any stopping time $\tau$ between 0 and $T$. If the holder exercises at time $\tau$, then the holder receives the payoff $\xi_\tau$. As in \cite{kobylanski2002reflected}, the forward price process $\left\{Y^A_t\right\}$ of such an option is naturally defined by the right-continuous process
$$
Y_t^A=\operatorname{ess} \sup _{\tau \in \mathcal S'_{t,T}} Y_t\left(\tau, \xi_\tau\right),
$$
where $\mathcal S'_{0,T}$ denotes the collection of $\mathbb G$-stopping times $\tau$ such that $0\le\tau\le T,\ \mathbb P-a.s.$. For any $\tau\in\mathcal S'_{0,T}$, $\mathcal S'_{\tau,T}$ denotes the collection of $\mathbb G$-stopping times $\tilde\tau$ such that $\tau\le\tilde\tau\le T,\ \mathbb P-a.s.$

Applying \cite[Proposition 3.1]{foresta2021optimal}, $Y_t^A=ess sup_{\tau\in \mathcal S'_{t,T}}\mathbb E_t\left[B+\int_t^\tau f(s,Z_s,U_s)ds+\xi_{\tau}1_{\tau<T}+B 1_{\tau=T}\right]$.  Hence, $Y^A$ is the unique solution of the RBSDE:
\begin{equation}
\left\{\begin{array}{l}
Y_t^A=B+\int_t^T f\left(s, Z_s, U_s\right) \mathrm{d} s-\int_t^T Z_s \mathrm{~d} W_s-\int_t^T \int_{\mathbb{R} \backslash\{0\}} U_s(x) \tilde{N}_p(\mathrm{~d} s, \mathrm{~d} x)+\int_t^T d K_s,\\
Y_t^A\ge \xi_t,\\
\int_0^T\left(Y^A_{s^{-}}-\xi_{s}\right) d K_s=0, \quad \mathbb{P} \text {-a.s. };
\end{array}\right.
\end{equation}
where 
$$
f(s, z, u)=\inf _{\pi \in \mathcal{C}}\left(\frac{{\tilde\alpha}}{2}\left|\pi \sigma_s-\left(z+\frac{\theta}{{\tilde\alpha}}\right)\right|^2+\frac{1}{{\tilde\alpha}}j_{\tilde\alpha}(s,u-\pi \beta_s)\right)-\theta z-\frac{|\theta|^2}{2 {\tilde\alpha}},
$$
which is obviously jointly convex in $u$ and $z$,  satisfying the exponential quadratic growth condition in Remark \ref{remark_RBSDEJ} (with $dC_t=dA_t=dt$) and linear bound condition inherited from assumption (H5).

\hspace*{\fill}\\
\hspace*{\fill}\\
\textbf{Acknowledgements}

The authors wish to acknowledge Prof. Ying Hu for his valuable suggestions.

\begin{appendices}
\section{Proofs in section \ref{section 3}}
\label{appexdix A}
\begin{proof}[Proof of Lemma \ref{G-estimation}]
We prove the lemma in the following three cases. 

 1) For $d t \otimes d \mathbb P$-a.e. $(t, \omega) \in\left\{Y_t(\omega) \geq 0\right\}$, applying (\ref{cvx estimate}) with $y=Y_t$, we can deduce from (\ref{G estimate1}) and $|a_t|\le\tilde \beta$ that
$$
\begin{aligned}
\hat{G}_t & \leq \zeta_\theta  e^{\tilde A_t}\left(\theta \mathfrak{F}\left(t, Y_t, \widehat{U}_t\right)-\theta \mathfrak{F}\left(t, \widehat{Y}_t, \widehat{U}_t\right)-a_t \tilde Y_t+(1-\theta)\left(\alpha_t+\beta\left|Y_t\right|\right)\right) \\
& =\zeta_\theta e^{\tilde A_t}\left((\theta-1) a_t  Y_t+(1-\theta)\left(\alpha_t+\beta\left|Y_t\right|\right)\right) \leq \lambda  e^{2 \tilde{\beta} \|A_T\|_\infty} \left(\alpha_t+(\beta+\tilde{\beta})\left|Y_t\right|\right)=\lambda  e^{2 \tilde{\beta} \|A_T\|_\infty} \left(\alpha_t+(\beta+\tilde{\beta}) Y_t^{+}\right) .
\end{aligned}
$$
In the first inequality, we use the fact that:
\begin{equation}
\label{j inequality}
\frac{\zeta_\theta e^{\tilde A_t}(1-\theta)}{\lambda}j_{\lambda}(\frac{\tilde U_t}{1-\theta})-j_1(\zeta_\theta e^{\tilde A}\tilde U_t)\le 0.
\end{equation}
which is equivalent to
\begin{equation}
\label{eq ju}
\frac{\zeta_\theta e^{\tilde A_t}(1-\theta)}{\lambda}\left(e^{\frac{\lambda\tilde U_t}{1-\theta}}-\frac{\lambda\tilde U_t}{1-\theta}-1\right)-\left(e^{\zeta_\theta e^{\tilde A_t}\tilde U_t}-ce^{\tilde A_t}\tilde U_t-1\right)=\frac{ \zeta_\theta e^{\tilde A_t}(1-\theta)}{\lambda}\left(e^{\frac{\lambda\tilde U_t}{1-\theta}}-1\right)-\left(e^{ \zeta_\theta e^{\tilde A_t}\tilde U_t}-1\right)\le 0.
\end{equation}
Indeed, plugging in $\zeta_\theta= \frac{\lambda  e^{\tilde{\beta} \|A_T\|_\infty}}{1-\theta}$, denote, 
$$
g_t(v)=\frac{ \zeta_\theta e^{\tilde A_t}(1-\theta)}{\lambda}\left(e^{\frac{\lambda v}{1-\theta}}-1\right)-\left(e^{ \zeta_\theta e^{\tilde A_t}v}-1\right)=e^{\tilde A_t+\beta\|A_T\|_\infty}(e^{\frac{\lambda v}{1-\theta}}-1)-e^{\frac{\lambda v}{1-\theta}e^{(\tilde A_t+\beta \|A_T\|_\infty)}}+1.
$$
We claim that $g(v)\le 0$, for any $v\in\mathbb R$, which implies (\ref{eq ju}). Notice that $g(0)=0$, 
$$
g_t'(v)=\frac{\lambda}{1-\theta}e^{\tilde A_t+\beta\|A_T\|_\infty}\left(e^{\frac{\lambda v}{1-\theta}}-e^{\frac{\lambda v}{1-\theta}e^{(\tilde A_t+\beta \|A_T\|_\infty)}}\right).
$$
Therefore, $g'(v)>0$ if $v<0$, and $g'(v)\le0$ if $v\ge 0$, which implies $g(v)\le g(0)=0$.

2) For $d t \otimes d \mathbb P$-a.e. $(t, \omega) \in\left\{Y_t(\omega)<0 \leq \widehat{Y}_t(\omega)\right\}$, applying (\ref{cvx estimate}) with $y=0$, we see from (\ref{G estimate1}) and \textcolor{red}{(H4)(b)} that
$$
\begin{aligned}
\hat{G}_t & \leq \zeta_\theta  e^{\tilde A_t}\left(\left|\mathfrak{F}\left(t, Y_t, U_t\right)-\mathfrak{F}\left(t, 0, U_t\right)\right|+\mathfrak{F}\left(t, 0, U_t\right)-\theta \mathfrak{F}\left(t, \widehat{Y}_t, \widehat{U}_t\right)+\tilde{\beta}\left(Y_t-\theta \widehat{Y}_t\right)\right)-j_1(\zeta_\theta e^{\tilde A}\tilde U_t) \\
& \leq \zeta_\theta e^{\tilde A_t}\left(\theta\left|\mathfrak{F}\left(t, 0, \widehat{U}_t\right)-\mathfrak{F}\left(t, \widehat{Y}_t, \widehat{U}_t\right)\right|+(1-\theta) \alpha_t-\tilde{\beta} \theta \widehat{Y}_t\right) \leq \alpha_t \lambda  e^{2 \tilde{\beta} \|A_T\|_\infty}.
\end{aligned}
$$
We use (\ref{j inequality}) again in the second inquality.

3) For $d t \otimes d P$-a.e. $(t, \omega) \in\left\{Y_t(\omega) \vee \widehat{Y}_t(\omega)<0\right\}$, applying (\ref{cvx estimate}) with $y=\widehat{Y}_t$, we see from (\ref{G estimate1}) and \textcolor{red}{(H4)(b)} that
$$
\begin{aligned}
\hat{G}_t & \leq \zeta_\theta e^{\tilde A_t}\left(\mathfrak{F}\left(t, \theta \widehat{Y}_t, U_t\right)-\theta \mathfrak{\mathfrak { F }}\left(t, \widehat{Y}_t, \widehat{U}_t\right)\right)-j_1(\zeta_\theta e^{\tilde A}\tilde U_t) \\
& \leq \zeta_\theta e^{\tilde A_t}\left(\left|\mathfrak{\mathfrak { F }}\left(t, \theta \widehat{Y}_t, U_t\right)-\mathfrak{F}\left(t, \widehat{Y}_t, U_t\right)\right|+\mathfrak{F}\left(t, \widehat{Y}_t, U_t\right)-\theta \mathfrak{F}\left(t, \widehat{Y}_t, \widehat{U}_t\right)\right)-j_1(\zeta_\theta e^{\tilde A}\tilde U_t) \\
& \leq \lambda  e^{2 \tilde{\beta} \|A_T\|_\infty} \left(\alpha_t+(\beta+\tilde{\beta})\left|\widehat{Y}_t\right|\right)=\lambda  e^{2 \tilde{\beta} \|A_T\|_\infty} \left(\alpha_t+(\beta+\tilde{\beta}) \widehat{Y}_t^{-}\right).
\end{aligned}
$$
The last inequality is also deduced by (\ref{j inequality}).
Combining the three cases above, we finish the proof.
\end{proof}

\end{appendices}
\bibliographystyle{abbrv}
\bibliography{RBSDEJ}

\end{document}